\documentclass[11pt]{amsart}
\usepackage{latexsym}
\usepackage{amsfonts}

\usepackage{amsmath,amsthm,amssymb}

\newtheorem{question}{Question}

\newtheorem{thm}{Theorem}[section]
\newtheorem{lem}[thm]{Lemma}
\newtheorem{cor}[thm]{Corollary}

\newtheorem{prop}[thm]{Proposition}
\theoremstyle{definition}
\newtheorem{defn}[thm]{Definition}

\newtheorem{obs}[thm]{Observation}

\newcommand{\comment}[1]{}

\begin{document} 

\author[Carl G. Jockusch, Jr.]{Carl G. Jockusch, Jr.}

\address{\tt Department of Mathematics, University of Illinois at
  Urbana-Champaign, 1409 West Green Street, Urbana, IL 61801, USA
  \newline http://www.math.uiuc.edu/\~{}jockusch/} \email{\tt
  jockusch@math.uiuc.edu}

\author[Paul Schupp]{Paul Schupp}

\address{\tt Department of Mathematics, University of Illinois at
  Urbana-Champaign, 1409 West Green Street, Urbana, IL 61801, USA}
  \email{\tt schupp@math.uiuc.edu}

\title[Generic computability,  Turing degrees
and asymptotic density]{Generic computability,
Turing degrees, and asymptotic density}

\begin{abstract}
  Generic decidability has been extensively studied in group theory,
  and we now study it in the context of classical computability
  theory.  A set $A$ of natural numbers is called \emph{generically
    computable} if there is a partial computable function which agrees
  with the characteristic function of $A$ on its domain $D$, and
  furthermore $D$ has density $1$, i.e. $\lim_{n \to \infty} |\{k<n 
  :  k \in D\}| / n = 1$.  A set $A$ is called \emph{coarsely
    computable} if there is a computable set $R$ such that the
  symmetric difference of $A$ and $R$ has density $0$.  We prove that
  there is a c.e.~set which is generically computable but not coarsely
  computable and vice versa.  We show that every nonzero Turing degree
  contains a set which is not generically computable and also a set
  which is not coarsely computable.  We prove that there is a c.e.~set
  of density $1$ which has no computable subset of density $1$.
  Finally, we define and study generic reducibility.
\end{abstract}
\subjclass[2000]{ Primary 20F69, Secondary 20F65, 20E07}

\keywords{generic computability, generic-case complexity, bi-immune sets,
$\Delta^{0}_{2}$ sets, sets of  density $1$}

\maketitle


\section{Introduction}\label{intro}

In recent years there has been a general realization that worst-case
complexity measures such as \emph{P, NP}, exponential time, and just
being computable often do not give a good overall picture of the
difficulty of a problem.  The most famous example of this is the
Simplex Algorithm for linear programming, which runs hundreds of times
every day, always very quickly.  Klee and Minty \cite{Klee} constructed
examples for which the simplex algorithm takes exponential time, but
these examples do not occur in practice.

Gurevich \cite{Gurevich} and Levin\cite{Levin} independently
introduced the idea of \emph{average-case complexity}.  Here one has a
probability measure on the instances of a problem and one averages the
time complexity over all instances.  An important result is the result
of Blass and Gurevich \cite{BG} that the \emph{Bounded Product
  Problem} for the modular group, $PSL(2,\mathbb{Z})$, is
\emph{NP}-complete but has polynomial time average-case complexity.
Average-case complexity is, however, difficult to work with because it
is highly sensitive to the probability distribution used and one must
still consider all cases.

  \emph{Generic-case complexity}
was introduced by Kapovich, Miasnikov, Schupp and Shpilrain \cite{KMSS}
as a complexity measure  which is much easier to work with.  The basic
idea is that one considers partial algorithms which give no incorrect answers
and  fail to converge only on a
``negligible'' set of inputs as defined below.

\begin{defn}[Asymptotic density]

Let $\Sigma$ be a nonempty finite alphabet  and let
$\Sigma^*$ denote the set of all finite words on  $\Sigma$.
The \emph{length}, $|w|$,  of a word  $w$  is the number of
letters in $w$.
Let $S$ be a subset of
$\Sigma^*$. For every $n\ge 0$ let $S\lceil n$ denote  the set of all  words in
$S$  of length  at most $n$.  Let

     \[ \rho_n(S) = \frac{|S \lceil n|}{|\Sigma^* \lceil n|} \]

We define the \emph{upper density} $\overline{\rho}(S)$ of $S$ in $\Sigma^*$
as

\[
\ \overline{\rho}(S) := \limsup_{n \to \infty} \rho_n(S)
\]

Similarly, we define the \emph{lower density} $\underline{\rho}(S)$ of $S$ in $\Sigma^*$
as

\[
 \underline{\rho}(S) := \liminf_{n \to \infty} \rho_n(S)
\]

If the actual limit
\[ 
\rho (S) =lim_{n \to \infty} \rho_n(S) \mbox{\  exists, then \  } \rho (S) \mbox{\  is the \emph{(asymptotic) density}
of \ } S \mbox{\  in \ }  \Sigma^*.
\]

\end{defn}

\begin{defn} A subset $S$ of $\Sigma^*$ is \emph{generic} if $\rho (S) = 1$
and $S$ is \emph{negligible} if  $\rho (S) = 0$.
\end{defn}

It is clear that $S$ is generic if and only if its complement
$\overline{S}$ is negligible. Also, the union and intersection of a
finite number of generic (negligible) sets is generic (negligible).

\begin{defn}
In the case where the limit
\[ lim_{n \to \infty} ~ {\rho_n(S)} = \rho(S) = 1 \] we are sometimes
interested in estimating the speed of convergence of the sequence
$\{\rho_n(S)\}$.  To this end, we say that the convergence is
\emph{exponentially fast} if there are $0 < \sigma <1$ and $C>0$ such
that for every $n\ge 1$ we have $1-\rho_n(S) \le C \sigma^n$.  In this
case we say that $S$ is \emph{strongly generic}.
\end{defn}

\begin{defn} Let $S$ be a subset of $\Sigma^*$ with characteristic
  function $\chi_S$.  A partial function $\Phi$ from $\Sigma^*$ to
  $\{0,1\}$ is called a \emph{generic description} of $S$ if $\Phi (x)
  = \chi_S (x)$ whenever $\Phi(x)$ is defined
  (written $\Phi(x) \downarrow$) and  the domain of $\Phi$ is
  generic in $\Sigma^*$.  A set $S$ is called \emph{generically
    computable} if there exists a \emph{partial computable} function $\Phi$
  which is a generic description of $S$.  We stress that \emph{all}
  answers given by $\Phi$ must be correct even though $\Phi$ need not
  be everywhere defined, and, indeed, we do not require the domain of
  $\Phi$ to be computable.

\end{defn}

It turns out that one can prove sharp results about generic-case
complexity without even knowing the worst-case complexity of a
problem.  Magnus \cite{L-S,Magnus} proved that one-relator groups have
solvable word problem in the 1930's.  We do not know any precise bound
on complexity over the entire class of one-relator groups.  But for
any one-relator group with at least three generators, the word problem
is strongly generically linear time \cite{KMSS}.  Also, we do not know
whether or not the isomorphism problem restricted to one-relator
presentations is solvable but the problem is strongly generically
linear time \cite{KS1, KSS}.  A very clear discussion of Boone's group
with unsolvable word problem is given in Rotman \cite{Rotman}.  The
proof shows that one can model a universal Turing machine inside the
group and words coding the Turing machine are called ``special''
words. Such words are indeed very special and the word problem
for Boone's group is strongly generically linear time \cite{KMSS}.
Indeed, it is not known whether there is a finitely generated group whose 
word problem is not generically computable.

   In many ways, generic computability is orthogonal to the idea of
Turing degrees since generic computability depends on how information
is distributed in a given set.

\begin{obs} Every Turing degree contains a set which is strongly
  generically computable in linear time.  Let $A$ be an arbitrary
  subset of $ \omega$ and let $S \subseteq \{0,1\}^*$ be the set
  $\{ 0^n : n \in A\}$.  Now $S$ is Turing equivalent to $A$ and is
  strongly generically computable in linear time by the algorithm
  $\Phi$ which, on input $w$, answers ``No'' if $w$ contains a $1$ and
  does not answer otherwise.  Here all computational difficulty is
  concentrated in a negligible set, namely the set of words containing
  only $0$'s.   Note that since the algorithm given is independent of the
  set $A$, the observation shows that one algorithm can generically 
  decide uncountably many sets.
\end{obs}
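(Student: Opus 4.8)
The plan is to hide all of the information in $A$ on a very thin set of strings, so that a trivial syntactic test handles almost every input. Fix an arbitrary Turing degree and a set $A \subseteq \omega$ in it, and set $S = \{0^n : n \in A\} \subseteq \{0,1\}^*$. First I would check $S \equiv_T A$: given $n$, one decides $n \in A$ by asking whether $0^n \in S$, and given a word $w$ one decides $w \in S$ by testing whether $w$ has the form $0^n$ (a purely syntactic check) and, if so, whether $n \in A$. Hence $S$ belongs to the prescribed degree.

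Next I would exhibit the partial computable function witnessing strong generic computability in linear time. Let $\Phi$ be the algorithm that, on input $w$, scans $w$ from left to right; if it meets a $1$ it halts and outputs $0$ (``$w \notin S$''), and if it reaches the end of $w$ having seen only $0$'s it diverges. This scan clearly runs in time $O(|w|)$. The point to check for correctness is the built-in asymmetry: $\Phi$ only ever outputs $0$, and only on a word that contains a $1$; such a word is not of the form $0^n$, hence genuinely not in $S$. So every answer $\Phi$ gives is correct, independently of $A$, and $\Phi$ is a generic description of $S$ provided its domain is generic.

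Finally I would estimate the density of $\mathrm{dom}(\Phi)$, the set of words containing at least one $1$. Its complement is $\{0^n : n \ge 0\}$, which has exactly one word of each length, so of the $|\{0,1\}^* \lceil n| = 2^{n+1}-1$ words of length at most $n$ only $n+1$ lie outside $\mathrm{dom}(\Phi)$; thus $1 - \rho_n(\mathrm{dom}(\Phi)) = (n+1)/(2^{n+1}-1)$, which is at most $C\sigma^n$ for suitable $0 < \sigma < 1$ and $C > 0$. Hence the convergence to density $1$ is exponentially fast, $\mathrm{dom}(\Phi)$ is strongly generic, and $\Phi$ is a strongly generic description of $S$ running in linear time. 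There is no real obstacle here; the only thing one must be careful about is precisely that asymmetry — $\Phi$ must never risk a ``yes'', since a positive answer could be wrong, whereas a negative answer on a word containing a $1$ is automatically correct. The closing remark is then immediate: $\Phi$ was defined with no reference to $A$, so the single algorithm $\Phi$ generically decides $\{0^n : n \in A\}$ for every $A \subseteq \omega$, and there are $2^{\aleph_0}$ such sets.
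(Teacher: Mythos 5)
Your proof is correct and matches the paper's own construction exactly: the same set $S=\{0^n : n\in A\}$, the same partial algorithm that answers ``no'' precisely on words containing a $1$, and the same observation that the complement of the domain has one word per length, giving exponential convergence. You have simply spelled out the density estimate and the Turing equivalence a bit more explicitly than the paper does.
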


The next observation is a general abstract version of Miasnikov's and
Rybalov's proof (\cite{MR}, Theorem 6.5) that there is a finitely
presented semigroup whose word problem is not generically computable.

\begin{obs}\label{nzd} Every nonzero Turing degree contains a set which is not
  generically computable.  Let $A$ be any noncomputable subset of
  $\omega$ and let $T \subseteq \{0,1\}^*$ be the set $\{ 0^n 1 w
  : n \in A, w \in \{0,1\}^* \}$.  Clearly $A$ and $T$ are Turing
  equivalent.  For a fixed $n_0$, $\rho( \{0^{n_0} 1 w : w \in
  \{0,1\}^* \} ) = 2^{-(n_0 + 1)} > 0$.  A generic algorithm for a set
  must give an answer on some members of any set of positive density.
  Thus $T$ cannot be generically computable since if $\Phi$ were a
  generic algorithm for $T$ we could just run bounded simulation of
  $\Phi$ on the set $\{0^{n} 1 w : w \in \{0,1\}^* \}$ until $\Phi$
  gave an answer, thus deciding whether or not $n \in A$.  Here the idea is
  that the single bit of information $\chi_A(n)$ is ``spread out'' to
  a set of positive density in the definition of $T$.  Also note
  that if $A$ is c.e. then $T$ is also c.e.~and thus every nonzero
  c.e.~Turing degree contains a c.e.~set which is not generically computable.
\end{obs}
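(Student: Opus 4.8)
The plan is to fix a nonzero Turing degree, pick a noncomputable set $A$ in it, and build from $A$ a set $T$ of words with $T \equiv_T A$ that admits no partial computable generic description. The guiding idea is the mirror image of the previous observation: rather than \emph{concentrating} the computational content of $A$ in a negligible set, we \emph{spread} each single bit $\chi_A(n)$ over a set of positive density, so that any would-be generic algorithm for $T$ is forced to output a correct value somewhere on that set and hence to decide whether $n \in A$. Concretely, over $\Sigma = \{0,1\}$ I would take $T = \{\,0^n 1 w : n \in A,\ w \in \{0,1\}^*\,\}$.

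First I would verify $T \equiv_T A$. With an oracle for $A$, a word $u$ lies in $T$ iff $u$ contains a $1$ and, writing $u = 0^n 1 w$ with the resulting unique $n$, we have $n \in A$; conversely $n \in A$ iff $0^n 1 \in T$, so also $A \le_T T$. I would record the c.e.\ addendum at this point: if $A$ is c.e., then $T$ is c.e., since whenever $n$ is enumerated into $A$ one may enumerate all words $0^n 1 w$; hence the construction also yields, in every nonzero c.e.\ degree, a c.e.\ set that is not generically computable.

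The heart of the matter is a short density fact: if $\Phi$ is a partial function whose domain is generic, then $\Phi$ is defined on at least one element of every set $P \subseteq \Sigma^*$ of positive density, for otherwise $P \subseteq \overline{\mathrm{dom}(\Phi)}$ would force $0 < \rho(P) \le \underline{\rho}(\overline{\mathrm{dom}(\Phi)}) = 0$. Applying this with $P = \{\,0^n 1 w : w \in \{0,1\}^*\,\}$, whose density is $2^{-(n+1)} > 0$, yields a decision procedure for $A$ from a hypothetical partial computable generic description $\Phi$ of $T$: on input $n$, dovetail the computations $\Phi(0^n 1 w)$ over all $w \in \{0,1\}^*$ until one halts, say $\Phi(0^n 1 w_0) \downarrow = b$. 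The density fact guarantees the search halts, and since $\Phi$ is a generic description of $T$, $b = \chi_T(0^n 1 w_0) = \chi_A(n)$. So $A$ would be computable, contrary to the choice of $A$; hence $T$ is not generically computable.

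There is no genuinely hard step here. The only points that need a little care are the density bookkeeping in that middle fact --- keeping straight the inequalities relating $\rho(P)$, the lower density $\underline{\rho}$, and the density $0$ of $\overline{\mathrm{dom}(\Phi)}$ --- and making explicit that the dovetailed search is guaranteed to terminate, which is exactly what the density fact supplies.
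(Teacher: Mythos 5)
Your proposal is correct and follows essentially the same route as the paper: same set $T$, same observation that each slice $\{0^n1w : w \in \{0,1\}^*\}$ has positive density $2^{-(n+1)}$, and the same dovetailed search for a value of $\Phi$ on that slice to decide $n \in A$. The only difference is that you've made explicit the small density lemma (a generic domain must meet every positive-density set) and the $T \equiv_T A$ verification, both of which the paper leaves implicit.
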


In the current paper we study generic computability for sets of
natural numbers using the concepts and techniques of computability
theory and the classic notion of asymptotic density for sets of
natural numbers.  An easy result, analogous to  Observation
\ref{nzd} above, is that every nonzero Turing degree contains a set of natural
numbers which is not generically computable.

 We define the  notion
of being \emph{densely approximable} by a class $\mathcal{C}$ of sets
and observe that a set $A$ is generically computable if and only if
it is densely approximable by c.e.~(computably enumerable)  sets.
 We prove that there is a c.e.~set of
density $1$ which has no computable subset of density $1$. 
It follows as a corollary  that there is a generically computable set $A$
such that no generic algorithm for $A$ has a computable domain.

We call a set $A$ of natural numbers \emph{coarsely computable} if
there is a computable set $B$ such that the symmetric difference of
$A$ and $B$ has density $0$.  We show that there are c.e.~sets which
are coarsely computable but not generically computable and c.e.~sets
which are generically computable but not coarsely computable.  We also
prove that every nonzero Turing degree contains a set which is not
coarsely computable.

 We consider a relativized notion of generic
computability  and also introduce a notion of generic reducibility which
 which gives a degree structure and which
is  related to enumeration  reducibility.  
Almost all of our proofs
use the  collection of sets $\{ R_n \}$ defined below which form a 
partition of $\mathbb{N} - \{ 0 \}$ into subsets of positive density.
We use this collection  to define a natural embedding of the Turing degrees 
into the generic degrees and show that this embedding is proper.  We close by
describing some related ongoing work with Rod Downey and stating  some
open questions.

\section{Generic computability of subsets of $\omega$}

    We identify the set $\mathbb{N} = \{0,1,\dots\}$ of natural numbers
with the set $\omega$ of finite ordinals and from now on we will focus on generic computability properties of
subsets of $\omega$ and how these interact with some classic
concepts of computability theory.  Thus, we are using the $1$-element
alphabet $\Sigma = \{1\}$ and identifying $n \in \omega$ with its
unary representation $1^n \in \{1\}^*$, so that we also identify
$\omega$ with $\{1\}^*$.  In this context, of course, our
definition of (upper and lower) density for subsets of $\{1\}^*$
agrees with the corresponding classical definitions for subsets of
$\omega$.  In particular, the density of $A$, denoted $\rho(A)$ is
given by $\lim_n \frac{|A \lceil n|}{n+1}$, provided this limit exists,
where $A \lceil n = A \cap [0,n]$.  Further, for $A \subseteq B$, the
density of $A$ in $B$ is $\lim_n \frac{|A \lceil n|}{|B \lceil n|}$,
provided $B$ is nonempty and this limit exists.  Corresponding
definitions hold for upper and lower density.  It is clear that if $A$
has positive upper density in $B$, and $B$ has positive density, then
$A$ has positive upper density.

Our notation for computability is mostly standard, except that we use
$\Phi_e$ for the unary partial function computed by the $e$-th Turing
machine, and we let $\Phi_{e,s}$ be the part of $\Phi_e$ computed in at 
most $s$ steps.  Let $W_e$ be the domain of $\Phi_e$.   We identify
a set $A \subseteq \omega$ with its characteristic function $\chi_A$.

\begin{defn} Let $\mathcal{C}$ be a family of subsets of $\omega$.  A set
$A \subseteq \omega$ is \emph{densely} $\mathcal{C}$-\emph{approximable} if
there exist sets
\[ C_0, C_1 \in \mathcal{C} \mbox{\  such that \ }
 C_0 \subseteq \overline{A}, \ C_1 \subseteq A \mbox{\  and \ }   C_0 \cup C_1 
\mbox{ \  has density \ } 1. \]
\end{defn}

The following proposition corresponds to the basic fact that a set
$A$ is computable if and only if both $A$ and its complement $\overline A$
are computably enumerable.

\begin{prop} \label {approx} A set $A$ is generically computable if
  and only if $A$ is densely approximable by c.e.~sets.
\end{prop}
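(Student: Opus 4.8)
The plan is to prove both directions by unwinding the definitions.

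\medskip

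\noindent\textbf{($\Rightarrow$)} Suppose $A$ is generically computable, via a partial computable generic description $\Phi$ with generic domain $D = \mathrm{dom}(\Phi)$. First I would set $C_1 = \{ x \in D : \Phi(x) = 1\}$ and $C_0 = \{x \in D : \Phi(x) = 0\}$. Since $\Phi$ is partial computable, each $C_i$ is c.e.\ (enumerate $D$ by running $\Phi$ in stages and sort outputs). Because $\Phi$ agrees with $\chi_A$ on $D$, we get $C_1 \subseteq A$ and $C_0 \subseteq \overline{A}$, and $C_0 \cup C_1 = D$ has density $1$. Hence $A$ is densely approximable by c.e.\ sets.

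\medskip

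\noindent\textbf{($\Leftarrow$)} Suppose $A$ is densely approximable by c.e.\ sets $C_0 \subseteq \overline{A}$, $C_1 \subseteq A$ with $\rho(C_0 \cup C_1) = 1$. Fix computable enumerations of $C_0$ and $C_1$. Define a partial computable $\Phi$ by dovetailing: on input $x$, simultaneously enumerate $C_0$ and $C_1$; if $x$ appears in the enumeration of $C_1$, output $1$; if $x$ appears in the enumeration of $C_0$, output $0$; otherwise diverge. The key point here is that $C_0$ and $C_1$ are disjoint (since $C_1 \subseteq A$ and $C_0 \subseteq \overline{A}$), so $\Phi$ is well-defined and single-valued, and whenever $\Phi(x)\downarrow$ we have $\Phi(x) = \chi_A(x)$. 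Finally $\mathrm{dom}(\Phi) = C_0 \cup C_1$, which has density $1$, so $\Phi$ is a generic description of $A$ and $A$ is generically computable.

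\medskip

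Neither direction presents a real obstacle; the only point requiring a moment's care is verifying that in the ($\Leftarrow$) direction $\Phi$ is consistent, which follows immediately from the disjointness of $C_0$ and $C_1$ forced by the containments $C_0 \subseteq \overline A$ and $C_1 \subseteq A$. This is exactly the analogue of the classical fact that $A$ is computable iff $A$ and $\overline A$ are both c.e., with ``equals'' weakened to ``agrees on a density-$1$ domain'' and ``c.e.'' replaced by ``c.e.\ subset.''
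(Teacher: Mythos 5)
Your proof is correct and follows the same approach as the paper's: the $C_i$ are taken to be the preimages of $0$ and $1$ under $\Phi$ in one direction, and in the other $\Phi$ is built by dovetailing enumerations of $C_0$ and $C_1$; you merely spell out the consistency/disjointness check that the paper leaves implicit.
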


\begin{proof} If $A$ is densely approximable by c.e.~sets then
there exist c.e.~sets  $C_0 \subseteq \overline{A}$ and
 $C_1 \subseteq A$ such that $C_0 \cup C_1$ has density $1$.
  For a given $x$, start enumerating  both
  $C_0$ and $C_1$   and if $x$ appears, answer accordingly.

    If $A$ is generically computable by a partial computable function $\Phi$,
then the sets $C_0$ and $C_1$ on which $\Phi$ respectively
answers ``No'' and ``Yes'' are the desired c.e.~sets.
\end{proof}

\begin{cor}  Every c.e.~set of density $1$ is generically computable.
\end{cor}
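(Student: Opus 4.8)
The statement to prove is the Corollary: every c.e.~set of density $1$ is generically computable. The plan is to apply Proposition \ref{approx}, which characterizes generic computability as dense approximability by c.e.~sets. So it suffices to exhibit c.e.~sets $C_0 \subseteq \overline{A}$ and $C_1 \subseteq A$ with $C_0 \cup C_1$ of density $1$.

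First I would take $C_1 = A$ itself, which is c.e.~by hypothesis and trivially satisfies $C_1 \subseteq A$. Next I would take $C_0 = \emptyset$, which is c.e.~and vacuously a subset of $\overline{A}$. Then $C_0 \cup C_1 = A$, which has density $1$ by hypothesis. Hence $A$ is densely approximable by c.e.~sets, so by Proposition \ref{approx} it is generically computable.

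There is essentially no obstacle here; the only thing to notice is that we do not need the complement to be enumerated at all, since the domain of the generic description need not be computable — we just answer ``Yes'' on elements of $A$ as they are enumerated, and the domain $A$ already has density $1$. So the ``hard part'' is merely recognizing that the one-sided approximation suffices, which is exactly what Proposition \ref{approx} delivers. An equivalent direct argument: the partial computable function $\Phi$ that enumerates $A$ and outputs $1$ on any $x$ that appears (and diverges otherwise) is a partial computable generic description of $A$, since its domain is $A$, which has density $1$, and every answer it gives is correct.
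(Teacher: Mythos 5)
Your proof is correct and is exactly the intended argument: the paper states this as an immediate corollary of Proposition \ref{approx} without writing out a proof, and the choice $C_1 = A$, $C_0 = \emptyset$ is precisely the instantiation it has in mind. Your closing ``equivalent direct argument'' (enumerate $A$ and answer $1$) is also the standard unwinding and adds nothing controversial.
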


Recall that a set $A$ is \emph{immune} if $A$ is infinite and $A$ does
not contain any infinite c.e.~set and $A$ is \emph{bi-immune} if both
$A$ and its complement $\overline{A}$ are immune.  If the union
 $C_0 \cup C_1$ of two c.e.~sets has density $1$, certainly at least one
of them is infinite. Thus we have the following corollary.    

\begin{cor} \label{bi-immune} No bi-immune set is generically computable
\end{cor}

Now the class of bi-immune sets is both comeager and of measure $1$.
(This is clear by countable additivity since the family of sets
containing a given infinite set is of measure $0$ and nowhere dense.)
Thus the family of generically computable sets is both meager and of
measure $0$.  See Cooper \cite{Cooper} for the definition of
$1$-\emph{generic} in computability theory and see Nies \cite{Nies} for
the definition of $1$-\emph{random}.  (This use of the word
``generic'' in the term ``$1$-generic'' has no relation to our general use of
``generic'' throughout this paper.)  We cite here only the facts that
 $1$-generic sets and and  $1$-random sets  are  bi-immune, and it
thus follows that no generically computable set is $1$-generic or
$1$-random.

  The following sets $R_{k}$ play a crucial role in almost all of our proofs.

\begin{defn}
\[ R_k = \{ m :  \ 2^k | m, \ 2^{(k + 1)} \nmid m \}  \]
\end{defn}

For example, $R_0$ is the set of odd nonnegative integers.  Note that
$\rho(R_k) = 2^{-(k+1)}$.  The collection of sets $\{R_k \}$ forms a
partition of unity for $\omega - \{0\}$ since these sets are pairwise
disjoint and $ \bigcup_{k = 0}^{\infty} R_k = \omega - \{0\}$.

 From the definition of asymptotic density it is clear that we have
 \emph{finite additivity} for densities: If $S_1, \dots ,S_t$ are pairwise
 disjoint sets whose densities exist, then

\[  \rho(\bigcup_{i = 1}^{t} S_i )  =  \sum_{i=1}^{t} \rho(S_i) .  \]

Of course, we do not have general countable additivity for densities, since
$\omega$ is a countable union of singletons.  However, we do have
countable additivity in certain restricted situations, where the
``tails'' of a sequence contribute vanishingly small density to the
union of a sequence of sets.

\begin{lem}[Restricted countable additivity] \label{rca}
  If $\{ S_i \}, i = 0, 1, \dots $ is a countable collection of pairwise
  disjoint subsets of $\omega$ such that each $\rho(S_i)$ exists
  and $\overline{\rho}( \bigcup_{i = N}^{\infty} S_i ) \to 0$ as $N
  \to \infty$, then

\[   \rho (\bigcup_{i = 0}^{\infty} S_i )  =  \sum_{i=0}^{\infty} \rho(S_i) .  \]

\end{lem}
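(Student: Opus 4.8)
The plan is to sandwich the density of the infinite union $S := \bigcup_{i=0}^\infty S_i$ between its upper and lower density and show both equal $\sum_{i=0}^\infty \rho(S_i)$. First I would note that the partial sums $\sum_{i=0}^{N-1} \rho(S_i)$ are nondecreasing and bounded above (each $\rho_n$ is at most $1$, so by finite additivity any finite subsum of the $\rho(S_i)$ is at most $1$), hence $\sum_{i=0}^\infty \rho(S_i)$ converges; call its value $L$. I would also observe the elementary monotonicity facts $\underline{\rho}(X) \le \overline{\rho}(X)$ for any $X$, and that for disjoint $X, Y$ one has $\overline{\rho}(X \cup Y) \le \overline{\rho}(X) + \overline{\rho}(Y)$ and $\underline{\rho}(X \cup Y) \ge \underline{\rho}(X) + \underline{\rho}(Y)$ (subadditivity of $\limsup$, superadditivity of $\liminf$); these follow immediately from the corresponding inequalities for $\limsup$ and $\liminf$ of sequences.

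Next, fix $N$ and write $S = \left(\bigcup_{i=0}^{N-1} S_i\right) \cup \left(\bigcup_{i=N}^\infty S_i\right)$, a disjoint union of a set whose density exists and equals $\sum_{i=0}^{N-1}\rho(S_i)$ (by finite additivity) and the tail $T_N := \bigcup_{i=N}^\infty S_i$. For the upper bound: $\overline{\rho}(S) \le \sum_{i=0}^{N-1}\rho(S_i) + \overline{\rho}(T_N)$; letting $N \to \infty$ and using the hypothesis $\overline{\rho}(T_N) \to 0$ gives $\overline{\rho}(S) \le L$. For the lower bound: $\underline{\rho}(S) \ge \underline{\rho}\left(\bigcup_{i=0}^{N-1} S_i\right) + \underline{\rho}(T_N) \ge \sum_{i=0}^{N-1}\rho(S_i) + 0 = \sum_{i=0}^{N-1}\rho(S_i)$, where I use $\underline{\rho}(T_N) \ge 0$ trivially; since this holds for every $N$, we get $\underline{\rho}(S) \ge L$. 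Combining, $L \le \underline{\rho}(S) \le \overline{\rho}(S) \le L$, so all are equal; in particular the limit $\rho(S)$ exists and equals $L = \sum_{i=0}^\infty \rho(S_i)$.

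I do not expect a serious obstacle here; the only point requiring a little care is making sure the lower-bound argument does not secretly need $\rho(T_N)$ to exist — it does not, since we only ever use $\underline{\rho}(T_N) \ge 0$, which is automatic. One could alternatively get the lower bound symmetrically by writing $\underline{\rho}(S) \ge \overline{\rho}(S) - \overline{\rho}(T_N) \ge \ldots$, but the version above is cleanest. The subadditivity/superadditivity inequalities for finitely many disjoint sets are exactly the $t$-term analogue of the finite additivity already invoked in the text, applied to $\limsup$ and $\liminf$ in place of $\lim$, so they need no separate justification beyond a one-line remark.
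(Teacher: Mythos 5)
Your proof is correct and uses the same basic decomposition as the paper: split $S$ into a finite head $\bigcup_{i<N} S_i$ (controlled by finite additivity) and a tail $T_N$ (controlled by the hypothesis $\overline{\rho}(T_N)\to 0$). The only difference is presentational: where the paper carries out an explicit three-$\epsilon$ estimate on $\rho_n$, you route the same decomposition through subadditivity of $\limsup$ and superadditivity of $\liminf$, which is a cleaner packaging (and also makes visible that the hypothesis on $T_N$ is needed only for the upper bound, with $\underline{\rho}(T_N)\ge 0$ sufficing for the lower bound).
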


\begin{proof}
  The sequence of partial sums $ \sum_{i=0}^{t} \rho(S_i) $ is a
  monotone nondecreasing sequence bounded above by $1$, and so
  converges.  Let its limit be $r$.  Now

  \[ \frac{| (\bigcup_{i = 0}^{\infty} S_i ) \lceil n |}{n+1} = \frac{|
    (\bigcup_{i = 0}^{N} S_i ) \lceil n |}{n+1} + \frac{| (\bigcup_{i =
      N+1}^{\infty} S_i ) \lceil n|}{n+1}
\]
We need to show that the term on the left approaches $r$ as $n \to
\infty$.  For any $N$, as $n \to \infty$ the first term on the right
approaches $\sum_{i=0}^{N} \rho(S_i)$ by finite additivity and thus
approaches $r$ as $N \to \infty$.  We are done because, by hypothesis,
the second term on the right can be made arbitrarily close to $0$ by
choosing $N$ sufficiently large and then $n$ sufficiently large.  In
more detail, let $\epsilon > 0$ be given.  Choose $N_0$ so that for
all $N \geq N_0$, $\overline{\rho}( \bigcup_{i = N}^{\infty} S_i ) <
\epsilon/3$.  Choose $N_1$ so that for all $N \geq N_1$, $|r - \sum_{i
  = 0}^N \rho(S_i)| < \epsilon/3$.  Fix $N = \max\{N_0, N_1\}$.
Rewrite the displayed equation above as $a_n = b_{n,N} + c_{n,N}$, so
that we are trying to prove that $a_n \to r$ as $n \to \infty$.
Choose $n_0$ so large so that for all $n \geq n_0$, $c_{n,N} <
\epsilon/3$.  Choose $n_1$ so large that for all $n \geq n_1$,
$|\sum_{i = 0}^N \rho(S_i) - b_{n,N}| < \epsilon / 3$.  Standard
calculations show that if $n \geq \max \{n_0, n_1\}$, then $|a_n - r|
< \epsilon$.
\end{proof}

\begin{defn} If $A \subseteq \omega$ then $\mathcal{R}(A) =
  \bigcup_{n \in A} R_n$
\end{defn}

Our sequence $\{R_n\}$ clearly satisfies the hypotheses of Lemma
\ref{rca}, so we have the following corollary.

\begin{cor} $\rho(\mathcal{R}(A)) = \sum_{n \in A} 2^{-(n+1)}$
\end{cor}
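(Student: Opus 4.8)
The plan is to derive this directly from Lemma~\ref{rca} (restricted countable additivity). Since that lemma is stated for a sequence indexed by all of $\omega$ rather than by the possibly proper subset $A$, the first step is a trivial padding: define $S_i = R_i$ when $i \in A$ and $S_i = \emptyset$ when $i \notin A$. The empty set has density $0$, so this does not disturb pairwise disjointness, each $\rho(S_i)$ still exists, and $\bigcup_{i=0}^{\infty} S_i = \bigcup_{n \in A} R_n = \mathcal{R}(A)$, while $\sum_{i=0}^{\infty} \rho(S_i) = \sum_{n \in A} 2^{-(n+1)}$.

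Next I would check the tail hypothesis of Lemma~\ref{rca}, namely $\overline{\rho}\bigl(\bigcup_{i=N}^{\infty} S_i\bigr) \to 0$ as $N \to \infty$. The key observation is that $\bigcup_{i \geq N} R_i = \{ m \geq 1 : 2^N \mid m \}$: since $\{R_k\}$ partitions $\omega - \{0\}$, each $m \geq 1$ lies in exactly one $R_k$, namely the one with $k$ equal to the exponent of $2$ dividing $m$, and that exponent is $\geq N$ precisely when $2^N \mid m$. This set has density exactly $2^{-N}$, so from $\bigcup_{i \geq N} S_i \subseteq \bigcup_{i \geq N} R_i$ and monotonicity of upper density we get $\overline{\rho}\bigl(\bigcup_{i \geq N} S_i\bigr) \leq 2^{-N} \to 0$.

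With the hypotheses verified, Lemma~\ref{rca} gives $\rho(\mathcal{R}(A)) = \rho\bigl(\bigcup_{i} S_i\bigr) = \sum_{i} \rho(S_i) = \sum_{n \in A} 2^{-(n+1)}$, as desired. I do not expect any genuine obstacle here; the only point requiring the slightest care is the indexing mismatch between the lemma (indexed by $\omega$) and the corollary (indexed by $A$), which the padding device resolves cleanly, and everything else reduces to the one-line computation about exponents of $2$ above.
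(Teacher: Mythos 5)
Your proposal is correct and follows the same route the paper intends: apply Lemma~\ref{rca} (restricted countable additivity) to the sequence $\{R_n\}$. The paper merely asserts that the hypotheses of the lemma are satisfied; you fill in the two small details it leaves implicit (padding with empty sets to reindex over $\omega$, and the tail estimate $\overline{\rho}\bigl(\bigcup_{i \geq N} R_i\bigr) = 2^{-N}$), both of which are exactly right.
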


This gives an explicit construction of sets  with a pre-assigned densities.

\begin{cor} Every real number $r \in [0,1]$ is a density.
\end{cor}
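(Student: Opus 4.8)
The plan is to read the desired set off the preceding corollary, which asserts $\rho(\mathcal{R}(A)) = \sum_{n \in A} 2^{-(n+1)}$ for every $A \subseteq \omega$. Thus it suffices, given $r \in [0,1]$, to produce a set $A$ with $\sum_{n \in A} 2^{-(n+1)} = r$, and the natural candidate is the set coding a binary expansion of $r$.

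Concretely, I would first fix a binary expansion $r = \sum_{n=0}^{\infty} b_n 2^{-(n+1)}$ with each $b_n \in \{0,1\}$; such an expansion exists for every $r \in [0,1]$, where for $r = 1$ one uses the expansion with all $b_n = 1$ and for $r = 0$ the expansion with all $b_n = 0$. Setting $A = \{n : b_n = 1\}$, the corollary immediately gives $\rho(\mathcal{R}(A)) = \sum_{n \in A} 2^{-(n+1)} = \sum_{n=0}^{\infty} b_n 2^{-(n+1)} = r$, so $r$ is the density of the set $\mathcal{R}(A)$. There is no real obstacle here: the only points needing a word are the existence of the binary expansion (standard) and the two extreme cases, which amount to the observations that $\mathcal{R}(\emptyset) = \emptyset$ has density $0$ and $\mathcal{R}(\omega) = \omega - \{0\}$ has density $1$ --- both already covered by the corollary via Lemma \ref{rca}.
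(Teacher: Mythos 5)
Your proof is correct and follows exactly the paper's approach: fix a binary expansion of $r$, let $A$ be the set of positions of the $1$'s, and apply the preceding corollary to conclude $\rho(\mathcal{R}(A)) = r$. The paper states this in one line; your version merely spells out the endpoint cases explicitly.
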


If $r = . b_0 b_1 b_2 \dotsb_i \dots$ is the binary expansion of $r$, let
$A = \{i : b_i = 1\}$ and then $\rho(\mathcal{R}(A)) = r$.
Recall that a real number $r$ is \emph{computable} if and only if
 there is a computable function $f : \mathbb{N} \to \mathbb{Q}$ 
such that  $| r - f(n) | \le 2^{-n}$ for all $n \ge 0$.

\begin{obs} The density $r_A$ of $\mathcal{R}(A)$, i.e.  $\sum_{n \in
    A} \rho(R_n)$, is a computable real if and only if $A$ is
  computable.
\end{obs}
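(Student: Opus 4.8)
The plan is to prove the two implications separately; the forward direction is routine, and all the content is in the converse. For the forward direction, suppose $A$ is computable. Then $f(n) = \sum_{k \le n,\, k\in A} 2^{-(k+1)}$ is a computable function from $\mathbb{N}$ to $\mathbb{Q}$, and the tail estimate $0 \le r_A - f(n) = \sum_{k > n,\, k\in A} 2^{-(k+1)} \le \sum_{k>n} 2^{-(k+1)} = 2^{-(n+1)} \le 2^{-n}$ shows that $f$ witnesses that $r_A$ is a computable real.

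For the converse I would first record the structural observation that $0.b_0 b_1 b_2 \cdots$, where $b_n = \chi_A(n)$, is a binary expansion of $r_A$. If this expansion is eventually constant, then $A$ is finite or cofinite, hence computable, and there is nothing to prove; so assume it is not eventually constant. Since a real that is not a dyadic rational has a unique binary expansion, which is neither eventually $0$ nor eventually $1$, while the (one or two) expansions of a dyadic rational are exactly the eventually-constant ones, the remaining case is precisely that $r_A$ is not dyadic. This case split is \emph{non-uniform} --- we have no way to decide which alternative holds --- but that causes no problem, since in each case $A$ turns out to be computable.

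So assume $r_A$ is not dyadic and let $f$ witness its computability; I will compute $\chi_A$ by recursion on $n$. Having computed $b_0,\dots,b_{n-1}$, put $s_n = \sum_{k<n} b_k 2^{-(k+1)}$, so that $r_A - s_n = \sum_{k\ge n} b_k 2^{-(k+1)}$ lies in $[0,2^{-n}]$ and $b_n = 1$ exactly when $r_A - s_n > 2^{-(n+1)}$, i.e.\ $r_A > s_n + 2^{-(n+1)}$. Because $r_A$ is not dyadic one checks that $r_A \ne s_n + 2^{-(n+1)}$ (equality would force $A\cap[n,\infty) = \{n\}$ or $(n,\infty)\subseteq A$, making $A$ finite or cofinite), so we can decide whether $r_A < s_n + 2^{-(n+1)}$ or $r_A > s_n + 2^{-(n+1)}$ by searching for an $m$ with $f(m) + 2^{-m} < s_n + 2^{-(n+1)}$ or with $f(m) - 2^{-m} > s_n + 2^{-(n+1)}$; such an $m$ exists precisely because $r_A \ne s_n + 2^{-(n+1)}$. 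This determines $b_n$ and the recursion computes $\chi_A$.

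The main obstacle is exactly this dyadic-rational subtlety: the tempting ``read the binary digits of $r_A$ off its rational approximations'' argument is invalid, since digit extraction from computable reals is not effective in general (indeed one cannot decide whether a given computable real is dyadic). The fix is the observation above --- the bad case already forces $A$ to be finite or cofinite, hence computable --- after which the recursive extraction goes through cleanly, because every comparison it makes is against a rational that $r_A$ provably differs from.
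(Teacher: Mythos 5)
Your proof is correct and follows the same binary-expansion route as the paper's. The one real difference is rigor: the paper's converse simply asserts that a computable real has a computable binary expansion, whereas you explicitly justify this by the non-uniform case split on whether $r_A$ is dyadic (equivalently, whether $A$ is finite or cofinite), which is exactly the standard argument needed to make the paper's unexplained step precise.
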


\begin{proof} If $A$ is computable, to compute the first $t$ bits of
  $r_A$, check if $0,\dots, t$ are in $A$, and take the resulting
  fraction $.b_0 \dots b_t$.  If  $r_A$ is computable then
  there exists an algorithm $\Phi$ which, when given $t$, computes the
  first $t$ digits of the binary expansion of $r_A$.  To see if $n \in
  A$, compute the first $(n+1)$ bits of $r_A$.
\end{proof}

We shall  later characterize those reals which are densities of computable sets.

It is obvious that every Turing degree contains a generically
computable subset of $\omega$.  Namely, given a set $A$, let $B =
\{2^n : n \in A\}$.  Then $B$ is generically computable via the
algorithm which answers ``no'' on all arguments which are not powers
of $2$ and gives no answer otherwise.   Now  $A$ and $B$ are Turing equivalent, and in fact they are many-one
equivalent if $A \neq \omega$.

\begin{obs} \label{nzd1} The set $\mathcal{R}(A)$ is Turing equivalent
  to $A$ and is generically computable if and only if $A$ is
  computable.  Hence, every nonzero Turing degree contains a subset of
  $\omega$ which is not generically computable.
\end{obs}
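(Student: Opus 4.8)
The plan is to prove the two halves of Observation~\ref{nzd1} separately. First, the Turing equivalence $\mathcal{R}(A) \equiv_T A$: given an oracle for $A$, to decide whether $m \in \mathcal{R}(A)$ one computes the unique $k$ with $2^k \mid m$ and $2^{k+1} \nmid m$ (for $m \neq 0$; note $0 \notin \mathcal{R}(A)$), and then queries whether $k \in A$. Conversely, given an oracle for $\mathcal{R}(A)$, to decide whether $n \in A$ one simply queries whether $2^n \in \mathcal{R}(A)$, since $2^n \in R_n$ and the $R_k$ are pairwise disjoint. Both directions are immediate.

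Next, the "if" direction of the equivalence about generic computability: if $A$ is computable, then $\mathcal{R}(A)$ is computable (by the first paragraph, since $A \equiv_T \mathcal{R}(A)$), hence trivially generically computable. The heart of the matter is the "only if" direction: if $\mathcal{R}(A)$ is generically computable then $A$ is computable. Suppose $\Phi$ is a partial computable generic description of $\mathcal{R}(A)$, so $\mathrm{dom}(\Phi)$ has density $1$. Fix $n$. The set $R_n$ has density $\rho(R_n) = 2^{-(n+1)} > 0$. Since $\mathrm{dom}(\Phi)$ has density $1$, its complement has density $0$, so $R_n \cap \mathrm{dom}(\Phi)$ has positive (upper) density in $R_n$; in particular $R_n \cap \mathrm{dom}(\Phi) \neq \emptyset$. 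Thus the search "enumerate $m \in R_n$ and run $\Phi$ on each, dovetailing, until some $m \in R_n$ with $\Phi(m)\!\downarrow$ is found" terminates. For any such $m$, since $m \in R_n \subseteq \mathcal{R}(A)$ iff $n \in A$, and $\Phi(m) = \chi_{\mathcal{R}(A)}(m)$ (all answers of $\Phi$ are correct), we get $\Phi(m) = \chi_A(n)$. Hence this search computes $\chi_A$, so $A$ is computable.

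Finally, the last sentence follows: given any nonzero Turing degree $\mathbf{d}$, pick a noncomputable set $A \in \mathbf{d}$; then $\mathcal{R}(A) \equiv_T A$, so $\mathcal{R}(A) \in \mathbf{d}$, and by the equivalence just proved $\mathcal{R}(A)$ is not generically computable. The only step requiring a little care is verifying that $R_n \cap \mathrm{dom}(\Phi)$ is nonempty — more precisely, that a set of density $0$ cannot contain a set of positive density like $R_n$, which is immediate from finite additivity (if $R_n \subseteq \overline{\mathrm{dom}(\Phi)}$ then $\rho(\overline{\mathrm{dom}(\Phi)}) \geq \rho(R_n) > 0$, contradicting density $1$ of $\mathrm{dom}(\Phi)$). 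This is exactly the same "a generic algorithm must answer on some member of any set of positive density" observation already used in Observation~\ref{nzd}, so no genuinely new idea is needed; the construction $\mathcal{R}(A)$ simply packages the single bit $\chi_A(n)$ into the positive-density block $R_n$.
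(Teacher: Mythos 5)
Your proof is correct and takes the same approach the paper uses, namely the argument of Observation~\ref{nzd}: since each $R_n$ has positive density, a generic algorithm for $\mathcal{R}(A)$ must halt on some member of $R_n$, and any such answer decides $n \in A$. You have simply spelled out the details (the Turing equivalence, the search procedure, and the density argument) that the paper leaves implicit by reference to the earlier observation.
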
 

This is the same argument as in Observation 1.7, namely any generic
algorithm for $\mathcal{R}(A)$ must converge on an element of each
$R_n$.  Note also that $A$ and $\mathcal{R}(A)$ are many-one
equivalent for $A \neq \omega$, so that every many-one degree which
contains a noncomputable set also contains a set which is not
generically computable.

\begin{defn} Two sets $A$ and $B$ are \emph{generically similar},
  which we denote by  $A \thicksim_g B$, if their symmetric difference $ A
  \triangle B = (A \diagdown B ) \cup ( B \diagdown A)$ has density
  $0$.
\end{defn}

It is easy to check that $\thicksim_g$ is an equivalence relation.
Any set of density $1$ is generically similar to $\omega$, and any
set of density $0$ is generically similar to $\emptyset$.

\begin{defn} A set $A$ is \emph{coarsely computable} if $A$ is
  generically similar to a computable set.
\end{defn}

From the remarks above, all sets of density $1$ or of density $0$ are
coarsely computable.  One can think of coarse computability in the
following way: The set $A$ is coarsely computable if there exists a
\emph{total} algorithm $\Phi$ which may make mistakes on membership in
$A$ but the mistakes occur only on a negligible set.  While being
coarsely computable is not of 
practical value in providing algorithms we shall see that it is
an interesting measure of how computable a set is.

\begin{obs} The word problem of any finitely generated group $G =
  \langle X:R \rangle$ is coarsely computable.
\end{obs}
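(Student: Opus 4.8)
The plan is to split into two cases according to whether $G$ is finite. Fix a finite generating set $X$ for $G$ (possible since $G$ is finitely generated), put $\Sigma = X \cup X^{-1}$, let $q = |\Sigma|$, and let $W \subseteq \Sigma^{*}$ be the word problem, i.e.\ the set of words $w$ with $\bar{w} = e$ in $G$, where $e$ is the identity. If $G$ is finite, then $W$ is a regular language (it is recognized by the deterministic automaton whose states are the elements of $G$), hence computable; since the symmetric difference $W \triangle W$ is empty, $W$ is generically similar to the computable set $W$, and so $W$ is coarsely computable. Hence from now on I may assume $G$ is infinite, in which case necessarily $q \ge 2$ (if $q \le 1$ then $|G| \le 2$).

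In the infinite case I will show that $W$ is \emph{negligible}, i.e.\ $\rho(W) = 0$. Granting this, $W \thicksim_{g} \emptyset$, and since $\emptyset$ is computable, $W$ is coarsely computable, completing the proof. To prove $\rho(W) = 0$, let $c(n)$ be the number of words of length $n$ over $\Sigma$ representing $e$, and set $p_{n} = c(n)/q^{n}$; this is the $n$-step return probability of the simple random walk on $G$ driven by the uniform measure $\mu$ on $\Sigma$. Since $|\Sigma^{*} \lceil n| = \sum_{j=0}^{n} q^{j}$, we have $\rho_{n}(W) = \frac{\sum_{j=0}^{n} c(j)}{\sum_{j=0}^{n} q^{j}}$, which is a weighted average of $p_{0}, \dots, p_{n}$ with weights $q^{j}$. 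As $q \ge 2$ these weights concentrate on the largest indices and $\sum_{j \le n} q^{j} \to \infty$, so a routine Toeplitz-type estimate, entirely in the style of the proof of Lemma~\ref{rca}, shows that $p_{n} \to 0$ implies $\rho_{n}(W) \to 0$.

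It therefore remains to prove that $p_{n} \to 0$ --- the classical fact that an infinite finitely generated group is not positive recurrent. I would give the following short self-contained argument. Let $P$ denote convolution by $\mu$ on $\ell^{2}(G)$; since $\Sigma$ is symmetric, $P$ is self-adjoint, and clearly $\|P\| \le 1$. One has $p_{n} = \langle P^{n} \delta_{e}, \delta_{e} \rangle$, and $p_{2m} = \|P^{m} \delta_{e}\|_{2}^{2}$ is non-increasing in $m$ because $P$ is a contraction. Let $\nu$ be the spectral measure of $P$ at $\delta_{e}$, a probability measure on $[-1,1]$; by the spectral theorem and dominated convergence, $p_{2m} = \int \lambda^{2m} \, d\nu(\lambda) \to \nu(\{1\}) + \nu(\{-1\})$. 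Now $\nu(\{1\})$ is the squared norm of the projection of $\delta_{e}$ onto $\ker(P - I)$, and this kernel is $\{0\}$: if $Pf = f$ with $f \in \ell^{2}(G)$, then equality must hold throughout the Cauchy--Schwarz chain $\|f\|^{2} = \langle Pf, f \rangle \le \|f\|^{2}$, which forces $f$ to be fixed by right translation by each $s \in \Sigma$, hence by all of $G$, hence constant, hence $0$ since $G$ is infinite. An essentially identical argument, tracking the word-length parity homomorphism, gives $\ker(P + I) = \{0\}$, so $\nu(\{-1\}) = 0$ as well. Thus $p_{2m} \to 0$, and then $p_{2m+1} = \langle P^{m+1} \delta_{e}, P^{m} \delta_{e} \rangle \le \sqrt{p_{2m} \, p_{2m+2}} \to 0$, so $p_{n} \to 0$.

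The only real content here is the fact $p_{n} \to 0$; everything else is bookkeeping, and one could alternatively just cite this standard property of random walks on infinite groups. I also note the harmless point that the word problem, and hence the statement, nominally depends on the choice of finite generating set, but coarse computability of $W$ does not: for finite $G$ it is always decidable, and for infinite $G$ it is negligible with respect to every finite generating set.
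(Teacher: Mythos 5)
Your proof is correct and follows the same overall plan as the paper's: split on whether $G$ is finite, handle the finite case via regularity of the word problem, and handle the infinite case by showing the word problem is negligible and hence generically similar to $\emptyset$. The one genuine difference is in how the negligibility is established. The paper simply cites Woess for the fact that, for infinite $G$, the words not representing the identity have density $1$. You instead give a self-contained argument: a Toeplitz-style lemma showing that $\rho_n(W) \to 0$ follows from $p_n \to 0$ for the return probabilities $p_n = c(n)/q^n$, and then a spectral-theoretic proof that $p_n \to 0$ for any symmetric random walk on an infinite group (via the spectral measure of the Markov operator at $\delta_e$, dominated convergence, the vanishing of the $\pm 1$ eigenspaces in $\ell^2$ of an infinite group, and an interpolation bound for odd $n$). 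Both steps check out; in particular the observation that $\ker(P - I) = 0$ because an $\ell^2$ fixed vector would be $G$-invariant hence constant hence zero, and the parity-homomorphism version for $\ker(P + I)$, are sound. What your version buys is independence from the Woess reference and a proof that works uniformly for all infinite finitely generated groups without invoking cogrowth or amenability; what it costs is length relative to the paper's one-line citation.
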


\begin{proof} If $G$ is finite then the word problem is computable.
  Indeed, since computability is independent of the particular
  presentation as long as it is finitely generated, we can use the
  finite multiplication table presentation, which shows that the word
  problem is even a regular language.  Now if $G$ is an infinite
  group, the set of words on $(X \cup X^{-1})^*$ which are not equal
  to the identity in $G$ has density $1$ and hence is coarsely
  computable. (See, for example, \cite{Woess}.)
\end{proof}

\begin{prop} There is a c.e.~set which is coarsely computable but not
  generically computable.
\end{prop}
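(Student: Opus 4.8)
The plan is to realize the proposition with a single well-chosen c.e.\ set, namely a \emph{simple} set $B$ of density $0$. The point is that density $0$ gives coarse computability for free (any density-$0$ set is generically similar to $\emptyset$, which is computable), while simplicity makes $\overline B$ immune, and an immune complement is exactly what obstructs dense approximation by c.e.\ sets once we know $\rho(B)=0$. Note that $B$ itself cannot be bi-immune (a c.e.\ set is never immune), so Corollary~\ref{bi-immune} does not apply directly; the density-$0$ refinement below is what does the work.

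First I would construct a simple set $B$ with $\rho(B)=0$ by the standard simple-set construction with delayed enumeration. Fix a fast-growing computable $f$, say $f(e)=2^{2^e}$. At stage $s$, let $e\le s$ be least such that $e$ is not yet declared \emph{satisfied} and $W_{e,s}$ contains some $x>f(e)$ not already in $B$; put the least such $x$ into $B$ and declare $e$ satisfied. As usual, $\overline B$ is infinite and $B$ meets every infinite $W_e$: if $W_e$ is infinite it has infinitely many elements above $f(e)$, and since each $e'<e$ acts at most once, all $e'<e$ are eventually permanently satisfied, so $e$ is acted on (if not already satisfied), whence $W_e\cap B\neq\emptyset$. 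Thus $B$ is simple. Moreover an element can enter $B$ on behalf of $e$ only at a stage $s>f(e)$ and at most once, so $|B\cap[0,n)|=O(\log\log n)$, giving $\rho(B)=0$. This step is routine and I do not expect it to be the obstacle.

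Next I would deduce that $B$ is coarsely computable but not generically computable. Since $\rho(B)=0$, we have $\rho(B\mathbin\triangle\emptyset)=\rho(B)=0$, so $B$ is generically similar to the computable set $\emptyset$, i.e.\ coarsely computable. For the other half, suppose toward a contradiction that $B$ is generically computable; by Proposition~\ref{approx} there are disjoint c.e.\ sets $C_0\subseteq\overline B$ and $C_1\subseteq B$ with $\rho(C_0\cup C_1)=1$. From $C_1\subseteq B$ we get $\rho_n(C_1)\le\rho_n(B)\to 0$, and from $\rho_n(C_0\cup C_1)\le\rho_n(C_0)+\rho_n(C_1)$ we conclude that $\overline\rho(C_0)=1$; in particular $C_0$ is infinite. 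But then $C_0$ is an infinite c.e.\ subset of $\overline B$, contradicting the immunity of $\overline B$. Hence $B$ is not generically computable, completing the proof.

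The only place to be careful is the verification that the \emph{delayed} simple-set construction still meets every infinite $W_e$ — one must check that the finitely many lower-priority-free... rather, that each $e'<e$ acts at most once and permanently, so $e$ is blocked only finitely often and therefore is eventually serviced. Beyond that bookkeeping there is no real obstacle; the substance of the argument is the clean observation that for a density-$0$ set, being densely approximable by c.e.\ sets forces an infinite c.e.\ subset of the complement, which immunity rules out.
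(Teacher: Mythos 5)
Your proposal is correct and follows essentially the same route as the paper: construct a simple set of density $0$, observe that density $0$ gives coarse computability, and use the immunity of the complement together with Proposition~\ref{approx} to rule out generic computability. The paper's simple-set construction is a bit leaner (it lets each requirement $e$ act independently, putting into $A$ the first element $>e^2$ enumerated into $W_e$, with no priority ordering needed), but this is a cosmetic difference.
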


\begin{proof}
  Recall that a c.e.~set $A$ is \emph{simple} if $\overline{A}$ is
  immune.  It suffices to construct a simple set $A$ of density $0$,
  since any such set is coarsely computable but not generically
  computable by Proposition \ref{approx}.  This is done by a slight
  modification of Post's simple set construction.  Namely, for each
  $e$, enumerate $W_e$ until, if ever, a number $> e^2$ appears, and
  put the first such number into $A$.  Then $A$ is simple, and $A$ has
  density $0$ because for each $e$, it has at most $e$ elements less
  than $e^2$.
\end{proof}

It is easily seen that the family of coarsely computable sets is
meager and of measure $0$.  In fact, if $A$ is coarsely computable,
then $A$ is neither $1$-generic nor $1$-random.  To see this, note
first that if $A$ is $1$-random and $C$ is computable, then the
symmetric difference $A \triangle C$ is also $1$-random, and the
analogous fact also holds for $1$-genericity.  The result follows
because $1$-random sets have density $1/2$ (\cite{Nies}, Proposition
3.2.13) and $1$-generic sets have upper density $1$.

\begin{thm} There exists a c.e.~set which is not generically similar
  to any co-c.e.~set and hence is neither coarsely computable nor
  generically computable.
\end{thm}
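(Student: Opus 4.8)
The plan is to build a c.e.\ set $A$ by a finite-injury-free direct construction (a movable-marker argument on the sets $R_k$) that diagonalizes against every co-c.e.\ set $\overline{W_e}$ simultaneously. The key observation is that, by restricted countable additivity (Lemma~\ref{rca}) and the fact that $\rho(R_k) = 2^{-(k+1)}$, if we can arrange that $A$ and $\overline{W_e}$ disagree on a set of \emph{positive} density for each $e$, then $A \triangle \overline{W_e}$ has positive upper density, so $A$ is not generically similar to $\overline{W_e}$; since this holds for all $e$, $A$ is not generically similar to any co-c.e.\ set. Coarse computability would require a computable (hence co-c.e.) set generically similar to $A$, and generic computability would, by Proposition~\ref{approx}, give c.e.\ sets $C_0 \subseteq \overline A$, $C_1 \subseteq A$ with $C_0 \cup C_1$ of density $1$; then $\overline{C_1}$ is co-c.e., contains $\overline A$, and $\overline{C_1} \triangle A \subseteq \overline{C_0 \cup C_1}$ has density $0$ --- contradiction. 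So it suffices to kill generic similarity to all co-c.e.\ sets.

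To make the disagreements, I would devote a block of the $R_k$'s to each requirement. Concretely, partition $\omega$ into consecutive blocks $B_0, B_1, \dots$ where $B_e$ is infinite (say $B_e = \{k : v_2(\text{something})\dots\}$, or more simply just let $B_e$ be an infinite computable set of indices with $\sum_{k \in B_e} 2^{-(k+1)} > 0$, chosen so the blocks are pairwise disjoint and exhaust $\omega$). Requirement $R_e$ is responsible for diagonalizing against $W_e$ using the region $\bigcup_{k \in B_e} R_k$. The strategy for $R_e$: put nothing into $A$ from this region initially (so $A$ agrees with the ``everything in the complement'' guess there), and wait. If $W_e$ grows to cover all but a density-$0$ portion of $\bigcup_{k \in B_e} R_k$ --- equivalently, if the enumeration of $W_e$ eventually puts, for every sufficiently large $k \in B_e$, ``most'' of $R_k$ into $W_e$ --- then $\overline{W_e}$ has density $0$ inside this region, whereas if we have kept $A$ empty there, $A$ and $\overline{W_e}$ agree on a density-$0$ set inside the region but we need them to \emph{disagree} on positive density. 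So instead the correct move is: for each $k \in B_e$, watch $W_e \cap R_k$; once it becomes large (at least half of the elements of $R_k$ below its current bound), enumerate into $A$ one fresh element of $R_k$ that is \emph{not yet} in $W_e$ (such an element exists since $R_k$ is infinite and $W_e$ is enumerated only finitely by any stage). Do this for infinitely many $k \in B_e$. Then either $W_e \cap R_k$ stays small for infinitely many $k \in B_e$ (so $\overline{W_e}$ has positive density in the region and $A$ is empty there --- but then $A \triangle \overline{W_e} \supseteq \overline{W_e} \cap \bigcup_{k\in B_e} R_k$ has positive upper density), or $W_e \cap R_k$ is eventually large for cofinitely many such $k$, in which case the elements we enumerated into $A$ form a set of points in $A \setminus \overline{W_e}$ --- hmm, those are in $W_e$, so they are not in $\overline{W_e}$, hence they \emph{are} in $A \triangle \overline{W_e}$, and there is one per $k$, which is too sparse.

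The fix, which I expect to be the main technical point, is to balance the two possibilities so that in \emph{either} case the symmetric difference captures a full positive-density chunk: for each $k \in B_e$, \textbf{either} $W_e$ eventually contains at least half of $R_k \lceil n$ for all large $n$ --- in which case I put the \emph{entire} upper half (or a fixed positive-density portion) of what is \emph{not} in $W_e$ from $R_k$... this still does not work cleanly, so the honest approach is the standard one: diagonalize against the \emph{pair} (co-c.e.\ set $\overline{W_e}$) by ensuring $A$ disagrees with $\overline{W_e}$ on a set of positive density, and to do that one keeps a ``reservoir'' in $R_k$ and uses a simple fact: $\rho(R_k \cap W_e) + \rho(R_k \setminus W_e) = 2^{-(k+1)}$, and whichever of $W_e$, $\overline{W_e}$ has density $\ge 2^{-(k+2)}$ in $R_k$, we put $A$ to \emph{avoid} it there, i.e., we make $A \cap R_k$ equal to a density-$0$ set if $\overline{W_e}$ is the big one (forcing disagreement with $\overline{W_e}$ on $\ge 2^{-(k+2)}$) and equal to all of $R_k$ minus a density-$0$ set if $W_e$ is big (again forcing disagreement with $\overline{W_e}$ on $\ge 2^{-(k+2)}$). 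But ``$W_e$ is big in $R_k$'' is only a $\Sigma^0_2$ fact, not decidable at finite stages --- this is exactly where a c.e.\ construction needs care: we cannot wait forever. The resolution is that we do not need the decision for \emph{every} $k \in B_e$, only for infinitely many, and for each such $k$ we may guess and revise finitely often, since enumerating more elements of $R_k$ into $A$ is allowed (it's c.e.) while removing is not --- so we commit to ``$A \cap R_k = $ almost all of $R_k$'' only after seeing $W_e$ cover a half of $R_k$, and otherwise leave $A \cap R_k = \emptyset$; summing over infinitely many $k \in B_e$ and applying Lemma~\ref{rca}, in the case ``$W_e$ big on cofinitely many $k \in B_e$'' we get $A \supseteq^* \bigcup_{k \in B_e'} R_k$ for an infinite $B_e'$ while $\overline{W_e} \cap \bigcup_{k \in B_e'} R_k$ has density $0$, so $A \triangle \overline{W_e}$ has positive upper density, and in the opposite case $\overline{W_e}$ has positive upper density inside a region where $A$ is empty. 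Either way $R_e$ is met. The requirements act on disjoint regions so there is no injury; $A$ is c.e.\ since we only ever enumerate elements in. I expect writing the case analysis cleanly --- making precise ``$W_e$ eventually covers half of $R_k$'' as a c.e.-constructible trigger and verifying the density bookkeeping via Lemma~\ref{rca} --- to be the bulk of the work, but it is routine once the two-case balancing idea is in place.
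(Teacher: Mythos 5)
Your reduction to the key claim is correct: it suffices to make $A\triangle\overline{W_e}$ have positive upper density for every $e$, and this also rules out generic computability via Proposition~\ref{approx}. (One small slip there: you want $\overline{C_0}$, not $\overline{C_1}$, to be the co-c.e.\ set generically similar to $A$; one has $\overline{C_0}\triangle A\subseteq\overline{C_0\cup C_1}$ because $C_0\subseteq\overline A$ and $C_1\subseteq A$, whereas $\overline{C_1}\triangle A\supseteq C_1$ need not be small.)

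The construction itself, however, has a genuine gap, and you flag the weak spot yourself. Your final plan is to commit ``$A\cap R_k=$ almost all of $R_k$'' once you see $W_e$ cover half of some initial segment of $R_k$, and then conclude that in this case ``$\overline{W_e}\cap\bigcup_{k\in B_e'}R_k$ has density $0$.'' That last inference is false: the trigger is a one-shot $\Sigma^0_1$ event, and after it fires $W_e$ may never put another element of $R_k$ in. Then in the limit $W_e\cap R_k$ is finite, so $\overline{W_e}\cap R_k$ has full density $\rho(R_k)$; meanwhile $A\cap R_k$ also has full density in $R_k$, so $A$ and $\overline{W_e}$ \emph{agree} on almost all of $R_k$ and $A\triangle\overline{W_e}$ has density $0$ there. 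Nothing stops this from happening on every $k\in B_e$, so the requirement can fail outright. Fixing this by a movable restraint is possible but delicate --- your restraint must track increasing initial segments, not a fixed one, and you still have to argue a dichotomy between ``restraint stabilizes'' and ``restraint moves infinitely often'' that is compatible with $A$ being c.e.\ --- and none of that is worked out.

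The point you are circling around without hitting is that no dichotomy, restraint, or balancing is needed at all. Simply set $A=\bigcup_{e}(W_e\cap R_e)$. This is c.e., and because the $R_e$ are pairwise disjoint, $A\cap R_e=W_e\cap R_e$ exactly. Hence on $R_e$ the sets $A$ and $\overline{W_e}$ are complementary, so $R_e\subseteq A\triangle\overline{W_e}$ for every $e$; since $R_e$ has positive (lower) density, $A$ is not generically similar to any $\overline{W_e}$. This is the paper's proof, and it replaces your entire stagewise case analysis with a single line. The moral is that you do not need $A$ to \emph{avoid} the bigger of $W_e,\overline{W_e}$ on $R_e$; you need $A$ to \emph{copy} $W_e$ on $R_e$, which forces disagreement with $\overline{W_e}$ everywhere on $R_e$ regardless of which is dense.
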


\begin{proof} Let $\{ W_e \}$ be a standard enumeration of all
  c.e.~sets. Let
  \[ A = \bigcup_{e \in \omega} (W_e \cap R_e ) \] Clearly, $A$ is
  c.e. We first claim that $A$ is not generically similar to any
  co-c.e.~set and hence is not coarsely computable. Note that
  \[ R_e \subseteq A \triangle \overline{W_e} \] since if $n \in R_e$
  and $n \in A$ , then $n \in (A \diagdown \overline{W_e})$, while if
  $n \in R_e$ and $n \notin A$, then $n \in ( \overline{W_e} \diagdown
  A)$.  So, for all $e$, $(A \triangle \overline{W_e})$ has positive
  lower density, and hence $A$ is not generically similar to
  $\overline{W_e}$.  It follows that $A$ is not coarsely computable.
  Of course, this construction is simply a diagonal argument, but
  instead of using a single witness for each requirement, we use a set
  of witnesses of positive density.

  Suppose now for a contradiction that $A$ were generically
  computable.  By Proposition \ref{approx}, let $W_a$, $W_b$ be
  c.e.~sets such that $W_a \subseteq A$, $W_b \subseteq \overline{A}$,
  and $W_a \cup W_b$ has density $1$.  Then $A$ would be generically
  similar to $\overline{W_b}$ since
  $$A \triangle \overline{W_b} \subseteq \overline{W_a \cup W_b}$$
  and $\overline{W_a \cup W_b}$ has density $0$. This shows that $A$ is
  not generically computable.
\end{proof}

\begin{defn} If $A \subseteq \omega$ and $\{A_s\}$ is a sequence of
  finite sets we write $\lim_s A_s = A$, if for every $n$ we have, for
  all sufficiently large $s$, $n \in A$ if and only if $n \in A_s$.
\end{defn}

  The Limit Lemma, due to J.~Shoenfield, characterizes the sets $A$
  computable from the halting problem $0'$ as the limits of uniformly
  computable sequences of finite sets.

\begin{lem}[The Limit Lemma]
  Let $A \subseteq \omega$  Then $A \leq_T 0'$ if and only if there is
  a uniformly computable sequence of finite sets $\{A_s\}$ such that $\lim_s
A_s = A$.
\end{lem}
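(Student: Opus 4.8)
The statement is the classical Limit Lemma of Shoenfield, so the plan is to give the standard two-directional argument, the only real care needed being the oracle bookkeeping in the harder direction.

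For the direction from a limit approximation to $0'$-computability, I would start from a uniformly computable sequence of finite sets $\{A_s\}$ with $\lim_s A_s = A$. Since we have a computable function producing canonical indices for the $A_s$, the relation $R(n,s) \equiv (\forall t \geq s)(n \in A_t \leftrightarrow n \in A_s)$ is $\Pi^0_1$, so its complement is c.e.\ and $0'$ decides $R$ uniformly in $(n,s)$. By hypothesis, for each $n$ there is an $s$ with $R(n,s)$, and for any such $s$ one has $n \in A_s \iff n \in A$ (combining $R(n,s)$ with $\lim_s A_s = A$). Thus, with oracle $0'$, on input $n$ one searches for the least $s$ with $R(n,s)$ --- a search that halts --- and outputs whether $n \in A_s$; this gives $A \leq_T 0'$.

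For the converse, I would start from an index $e$ with $A = \Phi_e^{0'}$. First fix the standard approximation $0'_s = \{x < s : \Phi_{x,s}(x)\downarrow\}$ to $0' = \{x : \Phi_x(x)\downarrow\}$: each $0'_s$ is finite and obtained uniformly computably, the sequence is increasing, and $\bigcup_s 0'_s = 0'$, so for every bound $u$ there is a stage past which $0'_s$ and $0'$ agree on $[0,u)$. Then define $A_s = \{x < s : \Phi_{e,s}^{\,0'_s}(x) \downarrow = 1\}$, where the computation is run with the finite oracle $0'_s$ for at most $s$ steps (so its oracle use is at most $s$); each $A_s$ is finite and the sequence is uniformly computable. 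To check $\lim_s A_s = A$, fix $n$: since $\Phi_e^{0'}(n)$ converges, it does so within some $t_0$ steps with some use $u$, and we may pick $s_0$ so that $0'_{s_0}$ agrees with $0'$ on $[0,u)$; then for every $s \geq \max\{t_0, u, s_0, n+1\}$ the computation $\Phi_{e,s}^{\,0'_s}(n)$ has enough steps, room for its use, and queries the oracle only where $0'_s = 0'$, so it returns exactly $\Phi_e^{0'}(n)$. Hence $n \in A_s \iff n \in A$ for all such $s$.

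The main obstacle --- really the only subtle point --- is this last convergence check: one is approximating both the number of steps and the oracle simultaneously by the single parameter $s$, and must verify that this double approximation stabilizes to the genuine computation $\Phi_e^{0'}(n)$ once the running time, the use, and the stage at which $0'$ has settled on the queried initial segment have all been surpassed, while noting that any spurious convergent values produced at small stages are harmless because only the eventual value affects $\lim_s A_s$.
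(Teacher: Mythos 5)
Your proof is correct, and it is the standard two-directional argument due to Shoenfield. The paper itself does not prove this lemma---it is stated as a known classical result attributed to Shoenfield, with no proof supplied---so there is nothing in the paper to compare against; your argument, including the careful handling of the simultaneous step-count/oracle approximation in the harder direction and the observation that spurious early convergences are harmless because only the eventual value of $A_s(n)$ matters, is a complete and standard proof of the lemma.
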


We note that  by Post's Theorem, the sets Turing reducible to $0'$ are precisely
the sets which are $\Delta^0_2$ in the arithmetical hierarchy.

\begin{thm} The set $\mathcal{R}(A) = \bigcup_{n \in A} R_n$ is
  coarsely computable if and only if  $A \leq_T 0'$ .
\end{thm}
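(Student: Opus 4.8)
The plan is to prove both directions by exploiting the structure of the sequence $\{R_n\}$, in which $R_n$ has density $2^{-(n+1)}$ and the tails $\bigcup_{n \geq N} R_n$ have density $2^{-N} \to 0$. For the forward direction, I would show that if $\mathcal{R}(A)$ is coarsely computable then $A \leq_T 0'$. Suppose $C$ is a computable set with $\rho(\mathcal{R}(A) \triangle C) = 0$. The idea is that because each $R_n$ has positive density, $C$ must ``get $R_n$ almost right'' for each $n$: specifically, for each $n$ either $\rho(R_n \setminus C) = 0$ (if $n \in A$) or $\rho(R_n \cap C) = 0$ (if $n \notin A$), and these two alternatives are mutually exclusive since $R_n$ has positive density. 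So to decide whether $n \in A$, one asks whether $R_n \setminus C$ has density $0$ in $R_n$ or not. The remaining point is that this question is answerable with a $0'$ oracle: ``$R_n \subseteq^* C$'' (i.e.\ all but finitely many elements of $R_n$ lie in $C$, where $R_n \setminus C$ being finite is equivalent here to its having density $0$ in $R_n$ — this needs a small argument) is a $\Sigma^0_2$ question about the computable sets $R_n$ and $C$, hence decidable from $0'$. Actually the cleanest route is via the Limit Lemma: I would exhibit a uniformly computable approximation $\{A_s\}$ to $A$, where $n \in A_s$ iff within $[0,s]$ the set $R_n$ has been ``more in $C$ than out of $C$'' by some margin that stabilizes. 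I expect this is where the care is needed — see below.

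For the reverse direction, suppose $A \leq_T 0'$. By the Limit Lemma fix a uniformly computable sequence of finite sets $\{A_s\}$ with $\lim_s A_s = A$. I want a computable set $C$ with $\rho(\mathcal{R}(A) \triangle C) = 0$. The natural candidate: put $m$ into $C$ iff, where $m \in R_n$, we have $n \in A_m$ (or $n \in A_s$ for the stage $s$ at which we are examining $m$ — using the input itself as the stage bound makes $C$ computable). Then $C$ is computable since given $m$ we can compute the unique $n$ with $m \in R_n$ and then compute $A_m$. To bound the density of the symmetric difference, note that $m \in \mathcal{R}(A) \triangle C$ (with $m \in R_n$) forces $n \in A \triangle A_m$, which for fixed $n$ happens for only finitely many $m$ (since $A_s \to A$ pointwise). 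So each $R_n$ contributes only finitely many elements to the symmetric difference — but that alone does not give density $0$, since there are infinitely many $R_n$. Here I would use Restricted Countable Additivity, Lemma~\ref{rca}: the symmetric difference $D$ is covered by $(D \cap \bigcup_{n < N} R_n) \cup \bigcup_{n \geq N} R_n$, the first part is finite hence density $0$, and the second has density $2^{-N}$, so $\overline{\rho}(D) \leq 2^{-N}$ for every $N$, giving $\rho(D) = 0$.

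The main obstacle, I expect, is the quantitative bookkeeping in the forward direction: I need to connect ``$\rho(\mathcal{R}(A) \triangle C) = 0$'' to something genuinely decidable from $0'$, and the subtlety is that density-$0$ is a $\Pi^0_2$-type condition that is not obviously finitary. The key lemma to establish is that for each $n$, exactly one of $R_n \setminus C$, $R_n \cap C$ has density $0$ \emph{in} $R_n$, and moreover that this ``winning side'' is what $\chi_A(n)$ records. The cleanest way to make the $0'$-computation work is to observe that since $\rho(\mathcal{R}(A) \triangle C) = 0$ and $R_n$ has density $2^{-(n+1)}$, for each $n$ the density of $(\mathcal{R}(A) \triangle C) \cap R_n$ in $R_n$ is $0$; combined with $R_n \subseteq \mathcal{R}(A)$ or $R_n \cap \mathcal{R}(A) = \emptyset$ according as $n \in A$ or not, this pins down $A(n)$ as: $n \in A \iff \rho(R_n \cap C \text{ in } R_n) = 1 \iff \rho(R_n \setminus C \text{ in } R_n) = 0$. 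Then I would note that ``$\rho(R_n \setminus C \text{ in } R_n) = 0$'' is equivalent to ``$\rho(R_n \cap C \text{ in } R_n) \geq 1/2$'' for our purposes (since only $0$ and $1$ can occur), and the latter — or rather the liminf version — can be decided from $0'$ by asking, for the computable predicate ``$|(R_n \cap C)\lceil s| / |R_n \lceil s| \geq 1/2$'', whether it holds for all sufficiently large $s$, which is a $\Sigma^0_2$ question. I would then package this as a $0'$-computable procedure and, if a cleaner presentation is wanted, translate it into an explicit $\Delta^0_2$ approximation via the Limit Lemma as above.
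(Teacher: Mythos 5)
Your proposal is correct and matches the paper's argument in both directions: the reverse direction builds $C$ by letting the input itself serve as the approximation stage and then invokes restricted countable additivity over the $R_n$, exactly as in the paper, and the forward direction recovers $A(n)$ by thresholding the observed density of $C$ on $R_n$ at $1/2$, which is precisely the paper's $\Delta^0_2$ approximation. The one place to tighten is the forward direction: your $\Sigma^0_2$ characterization of $A$ alone would only give $A \leq_T 0''$, so you do need (as you note at the end) either the symmetric $\Pi^0_2$ description of $\overline{A}$ or, as the paper does, the explicit Limit-Lemma approximation $A_s = \{n \le s : \rho_s(C \cap R_n) \ge \tfrac{1}{2}\,2^{-(n+1)}\}$ to land in $\Delta^0_2$.
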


\begin{proof} First suppose that $A \leq_T 0'$.  Then by the Limit
  Lemma there is a uniformly computable sequence $\{A_s\}$ of finite sets
  such that $\lim_s A_s = A$.  To construct a computable set
  $C$ generically similar to $\mathcal{R}(A)$ we
  do the following.  Any $n$ is in a unique set $R_k$. Compute this
  $k$, so $n \in \mathcal{R}(A)$ if and only if $ k \in A$.  We put $n$
  into $C$ if and only if $k$ is in the
  approximating set $A_n$.  This condition is computable.  Now note
  that if $n$ is sufficiently large then $k \in A $ if and only if $ k
  \in A_n$.  Hence
  \[ (C \triangle \mathcal{R}(A)) \cap R_k \] is
  finite for all $k$.  Let $D = (C \triangle
  \mathcal{R}(A))$.  Then $D \cap R_k$ has density $0$ for all $k$ and
  thus $D$ has density $0$ by Lemma \ref{rca} on restricted countable
  additivity.  It follows that $\mathcal{R}(A)$ is coarsely
  computable.

  Now suppose that $\mathcal{R}(A)$ is coarsely computable, that is,
  it is generically similar to a computable set $C$.  We need to show
  that  $A \leq_T 0'$ by finding a uniformly computable
  sequence of finite sets $\{A_s\}$ with $\lim_s A_s = A$.

  Note that $\rho (C) = \rho(\mathcal{R}(A))$ which exists, and
  $\rho(C \cap R_n) = \rho(\mathcal{R}(A) \cap R_n)$.  So if $n \in
  A$, then $\rho(\mathcal{R}(A) \cap R_n) = \rho(R_n) = 2^{-(n+1)}$ while if $n
  \notin A$, then $\rho( R_n \cap \mathcal{R}(A)) = 0$. Thus, we can
  use our ability to approximate $\rho(C \cap R_n)$ to approximate $A$.

      At stage $s$, for every $n \le s$, calculate
      \[ \rho_s(C \cap R_n) = \frac{ |( C \cap R_n) \lceil s | }{s+1}.  \]
      Put $n$ into $A_s$ if and only if this fraction is $\ge
      \frac{1}{2} (2^{-(n+1)})$.  The sequence
      $\{A_s \}$ is uniformly computable. It converges to $A$ since
      $\rho_s (C \cap R_n)$ converges to $\rho(C \cap R_n)$ as $s \to
      \infty$.
\end{proof}

In particular, if $A$ is any set Turing reducible to $0'$ but not
computable then $\mathcal{R}(A)$ is coarsely computable but not
generically computable.  We can now prove the following theorem.

\begin{thm} Every nonzero Turing degree contains a set which is not
coarsely computable.
\end{thm}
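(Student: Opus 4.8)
The plan is to code the given noncomputable set into a target set via intervals of \emph{positive} relative density --- the same idea behind $\mathcal{R}(A)$, but with coding blocks that do not shrink to density $0$, so that a density-zero perturbation cannot destroy the coding. (Note that $\mathcal{R}$ alone does not suffice here: by the previous theorem it produces a non-coarsely-computable set only in degrees $\mathbf{d} \not\leq \mathbf{0}'$.) Fix $\mathbf{d} \neq \mathbf{0}$ and a noncomputable $A \in \mathbf{d}$. For $n \ge 0$ put $I_n = [2^n, 2^{n+1})$, so that $|I_n| = 2^n$, the $I_n$ partition $\omega \setminus \{0\}$, and each $I_n$ is a subinterval of $[0, 2^{n+1})$ of relative density exactly $1/2$. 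Define $B = \bigcup_{n \in A} I_n$.

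First I would verify $B \equiv_T A$, so that $B \in \mathbf{d}$. Since $B \cap I_n$ equals all of $I_n$ if $n \in A$ and is empty if $n \notin A$, we have $n \in A \iff 2^n \in B$, giving $A \leq_T B$; and $B \leq_T A$ because $0 \notin B$ and, for $m \geq 1$, $m \in B \iff \lfloor \log_2 m \rfloor \in A$.

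Next I would show $B$ is not coarsely computable. Suppose for contradiction that $C$ is computable and $\rho(B \triangle C) = 0$. Then $\frac{|(B\triangle C) \cap [0, 2^{n+1})|}{2^{n+1}} \to 0$, so for all sufficiently large $n$ this quantity is $< 1/4$; since $I_n \subseteq [0, 2^{n+1})$ and $|I_n| = 2^{n+1}/2$, it follows that $|(B \triangle C) \cap I_n| < |I_n|/2$ for all large $n$. Consequently, for all large $n$: if $n \in A$ then $C$ agrees with $B$ on more than half of $I_n$, hence $|C \cap I_n| > |I_n|/2$; while if $n \notin A$ then $|C \cap I_n| < |I_n|/2$. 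Thus the computable predicate ``$|C \cap I_n| \geq |I_n|/2$'' agrees with $\chi_A(n)$ for all large $n$, so $A$ is computable, a contradiction.

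The one point requiring care is the inference from $\rho(B \triangle C) = 0$ to the per-block bound $|(B\triangle C)\cap I_n| < |I_n|/2$; this is precisely where the positive relative density of each $I_n$ in its initial segment $[0,2^{n+1})$ is used, and it is the property one must engineer into the coding blocks, since the natural blocks $R_n$ have $\rho(R_n) \to 0$. Everything else is a routine computation.
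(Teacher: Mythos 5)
Your proof is correct, and it takes a genuinely different and more elementary route than the paper's. The paper argues by cases: if $A \not\leq_T 0'$, then $\mathcal{R}(A)$ already works by the preceding theorem; if $A$ is noncomputable but $A \leq_T 0'$, the paper invokes the Martin--Miller theorem to get an $f \leq_T A$ not majorized by any computable function, runs a finite-injury diagonalization with $f$ as a time bound to build a non-coarsely-computable $B \leq_T f$, and finally passes to $A \oplus B$. Your construction collapses both cases into a single direct coding: you replace the residue classes $R_n$ (each of asymptotic density $2^{-(n+1)}$, spread thinly across $\omega$) by the intervals $I_n = [2^n, 2^{n+1})$, each of which is concentrated and occupies exactly half of the initial segment $[0, 2^{n+1})$. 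That concentration is precisely what makes the coding robust under density-$0$ perturbation: if $C$ is computable and $\rho(B \triangle C) = 0$, then for large $n$ the fraction of $[0,2^{n+1})$ where $B$ and $C$ disagree drops below $1/4 < |I_n|/2^{n+1} = 1/2$, so the majority vote on $I_n$ (a computable predicate of $n$) recovers $\chi_A(n)$ with only finitely many errors, forcing $A$ to be computable. Besides being shorter and self-contained, your argument has a dividend the paper explicitly flags as unknown: the maps $A \mapsto B = \bigcup_{n\in A} I_n$ and $n \mapsto B(2^n)$ are a single fixed pair of oracle Turing machines witnessing $B \equiv_T A$, so your proof exhibits one oracle machine $M$ with $M^A$ not coarsely computable and of the same degree as $A$ for \emph{every} noncomputable $A$ --- exactly the uniformity that, in the remark following this theorem, the authors say they do not know how to obtain for coarse computability.
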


\begin{proof} If $A$ is not Turing reducible to $0'$, then
  $\mathcal{R}(A)$, which is Turing equivalent to $A$, is not coarsely
  computable by the previous theorem.  Now assume that $A$ is
  noncomputable and $A \leq_T 0'$.  We now apply Theorem 1.2 of
  \cite{MM} which implies that every nonzero Turing degree
  $\bf{a \le 0'}$ computes a function $f$ which is not
  majorized by any computable function.  The argument is now
  essentially a diagonalization argument using such an $f \leq_T A$ as
  a time bound.

  We now construct a set $B \leq_T f$ which is not coarsely
  computable.  This suffices since then
 \[ A \oplus B = \{ 2n: n \in A \} \cup \{ 2n+1: n \in B \} \]
  is a set Turing
  equivalent to $A$ which is not coarsely computable.

  For every pair $\langle e,k \rangle$ the requirement $P_{e,k}$ is
  that if $\Phi_e$ is a total $\{0,1\}$-valued function, then there
  exists a $j \ge k$ such that $\Phi_e$ and the characteristic
  function of $B$ disagree on all points in the interval $[2^j,
  2^{(j+1)} )$.  If all the $P_{e,k}$ are met, then the upper density
  of $B \triangle \Phi^{-1}(1)$ is at least $\frac{1}{2}$ for each
  $e$ with $\Phi_e$ total and $\{0,1\}$-valued.  Hence $B$ is not
  generically similar to the set whose characteristic function is
  $\Phi_e$.

  For definiteness, set  $0 \notin B$.  We now determine $B$ on
  each interval $[2^j, 2^{j+1})$ in the natural order.  Initially, no
  requirements are met.  Suppose that $B$ has been defined on each
  interval $[2^i, 2^{i+1})$ for $i < j$.  Say that a requirement
  $P_{e,k}$ \emph{requires attention} if it is not yet met, $j
  \geq k$, and
  \[ \Phi_{e,f(j)} (x) \downarrow \mbox{\ for all \ } x \in [2^j,
  2^{(j+1)} ) \] If there is no requirement $P_{e,k}$ with $\langle e
  , k \rangle \leq j$ which requires attention, let $B$ be empty on
the interval $[2^k , 2^{k+1})$.  Otherwise, let $\langle e, k \rangle$
be minimal such that $P_{e,k}$ requires attention.  Make $B(x)$ and
$\Phi_e (x)$ disagree on all $ x \in [2^j, 2^{(j+1)} )$, and
declare $P_{e,k}$ met (forever).  In this case we say that $P_{e,k}$
\emph{receives attention}.

We claim that all the requirements $P_{e,k}$ are satisfied.  Note that each
such requirement receives attention at most once.  Suppose for a
contradiction that $P_{e,k}$ is not met, so that $\Phi_e$ is total
and $\{0,1\}$-valued and $P_{e,k}$ never receives attention.  Since
there are only finitely many stages at which requirements $P_a$ for $a
< \langle e, k \rangle$ receive attention, there are only finitely
many $j$ such that $\Phi_{e, f(j)}$ is defined on all points in the
  interval $[2^j, 2^{j+1})$.  Let $g(j)$ be the first stage such that
  $\Phi_{e,g(j)}$ is defined for all points in $ [2^j, 2^{(j+1)} )$.
  Now $g$ is a computable function, but $g(j) \ge f(j)$ for all
  sufficiently large $j$, contradicting that $f$ is not majorized by any
  computable function.
\end{proof}

The above proof is less uniform than the proof of the corresponding
result (Observation \ref{nzd1}) for generic computability.  More precisely,
the proof of Observation \ref{nzd1}  shows that there is a fixed oracle
Turing machine $M$ such that $M^A$ is a generically noncomputable set
of the same Turing degree as $A$ for every noncomputable set $A$,
namely $M^A = \mathcal{R}(A)$.  However, we do not know whether there is
a fixed such $M$ with the corresponding property for coarse
computability.

A real number $r$ which is computable relative to $0'$ is called a
$\Delta_{2}^{0}$ real, and it is well known that these are the reals
whose binary expansion is computable from $0'$.  It then follows from
the Limit Lemma that a real number $r \in [0,1]$ is $\Delta_{2}^{0}$
if and only if $r = lim_n q_n$ for some computable sequence of
rational numbers in the interval $(0,1)$.

  \begin{thm} \label{d02} A real number $r \in [0,1]$ is the density
    of some computable set if and only if $r$ is a $\Delta_{2}^{0}$
    real.
\end{thm}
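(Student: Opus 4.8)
The plan is to prove both directions. For the forward direction, suppose $r = \rho(C)$ for some computable set $C$; I will show $r$ is $\Delta^0_2$ by exhibiting a computable sequence of rationals converging to $r$. The natural candidate is $q_n = \rho_n(C) = |C \lceil n| / (n+1)$. Since $C$ is computable, this sequence is uniformly computable, and since $\rho(C)$ exists and equals $r$, we have $q_n \to r$. After discarding finitely many terms we may assume all $q_n \in (0,1)$, or handle the boundary cases $r = 0, 1$ separately (these are trivially $\Delta^0_2$, indeed computable). By the remark preceding the theorem (the characterization of $\Delta^0_2$ reals as limits of computable sequences of rationals), $r$ is $\Delta^0_2$. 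This direction is essentially immediate.

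The substantive direction is the converse: given a $\Delta^0_2$ real $r \in [0,1]$, I must build a \emph{computable} set $C$ with $\rho(C) = r$. The earlier construction $\mathcal{R}(A)$ only gives density $r$ from a set $A$ with $A \equiv_T r$, which is generally noncomputable, so that approach fails here; the point of this theorem is that coding into the $R_k$'s is not needed to realize a $\Delta^0_2$ density by a genuinely computable set. Instead I would build $C$ directly by a stage-by-stage construction, deciding membership of $0, 1, 2, \dots$ in order, using a computable approximation $r = \lim_s q_s$ with each $q_s \in (0,1)$ rational (available by the Limit Lemma remark just above the theorem). The idea is a "follow the target" strategy: having decided $C$ on $[0, n-1]$ with $k = |C \lceil (n-1)|$ elements so far, put $n$ into $C$ iff $(k+1)/(n+1) \le q_n$ — i.e., greedily keep $\rho_n(C)$ as close to the current estimate $q_n$ as possible from below, or symmetrically, put $n$ in iff doing so does not overshoot the current target. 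This is clearly a computable decision procedure, so $C$ is computable.

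The main obstacle is the convergence argument: showing $\rho_n(C) \to r$. The heart of it is that such a greedy process keeps $\rho_n(C)$ within $O(1/n)$ of a "target density" at each stage, \emph{provided the target moves slowly}; but a raw $\Delta^0_2$ approximation $q_s$ need not move slowly — it can oscillate wildly. The fix is to feed the greedy construction a \emph{slowed-down} approximation: before running the construction, replace $\{q_s\}$ by a new computable sequence $\{\tilde q_s\}$ that still converges to $r$ but changes by at most, say, $1/(s+1)$ from one stage to the next (define $\tilde q_0 = q_0$ and let $\tilde q_{s+1}$ be $q_{s+1}$ rounded toward $\tilde q_s$ by at most $1/(s+2)$; since $q_s \to r$ one checks $\tilde q_s \to r$ as well). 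With this Lipschitz-in-$n$ target, a standard induction shows $|\rho_n(C) - \tilde q_n|$ stays bounded by a quantity tending to $0$ (each step changes $\rho_n(C)$ by $O(1/n)$ and we only ever move in the direction that decreases the discrepancy once it exceeds the step size), and since $\tilde q_n \to r$ we conclude $\rho_n(C) \to r$, i.e. $\rho(C) = r$. I would present the discrepancy bound as the one genuine lemma of the argument and leave the arithmetic of the rounding estimates to the reader.
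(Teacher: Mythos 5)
Your forward direction matches the paper's (and is indeed immediate). For the converse, you take a genuinely different route from the paper. The paper does not decide membership element by element; instead it builds a computable increasing sequence of checkpoints $s_1 < s_2 < \cdots$ with $s_n \ge n$ and, between consecutive checkpoints, either adds an entire block $\{s_n+1, \dots, s_{n+1}\}$ to $A$ or withholds it entirely, choosing $s_{n+1}$ to be the first place where $\rho_{s_{n+1}}(A)$ crosses $q_{n+1}$. The payoff is that $\rho_k(A)$ is \emph{monotone} on each interval $(s_n, s_{n+1}]$, so $\rho_k(A)$ is trapped between $\rho_{s_n}(A)$ and $\rho_{s_{n+1}}(A)$, each of which is within $1/(s_n+1) \le 1/(n+1)$ of $q_n$ resp.\ $q_{n+1}$; convergence to $r$ then falls out with no extra hypotheses on the sequence $q_n$ beyond its convergence. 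You instead decide each $n$ greedily and pre-process the approximating sequence to make it Lipschitz in $n$. That is a workable alternative, and it is fine to present it, but it buys you nothing the checkpoint scheme does not, and the convergence analysis becomes more delicate rather than less.

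There is also a genuine gap in the justification you sketch for the key lemma. You claim that, with the slowed target $\tilde q_n$, ``we only ever move in the direction that decreases the discrepancy once it exceeds the step size.'' That is not quite true: the slowed target is allowed to move by up to $1/(n+2)$ per step, while the greedy step changes $\rho_n(C)$ by only $(1-\rho_n(C))/(n+2)$ when increasing (or $\rho_n(C)/(n+2)$ when decreasing), which is \emph{strictly less} than $1/(n+2)$. So if the target is drifting away (say upward while $\rho_n(C)$ is near $1$), the discrepancy $|\rho_n(C) - \tilde q_n|$ can in fact grow even when it already exceeds $2/(n+2)$. The induction as stated therefore does not close. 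The correct argument must use the convergence $\tilde q_n \to r$ essentially: fix $\epsilon$, pick $N$ beyond which $|\tilde q_n - r|<\epsilon$ and $1/(n+2)<\epsilon$; show that whenever $\rho_n(C) > r + 3\epsilon$ the greedy choice decreases $\rho_n(C)$ by at least $3\epsilon/(n+2)$ (a divergent series, so $\rho_n(C)$ must drop back below $r+3\epsilon$), and separately that once $\rho_n(C) \le r + 3\epsilon$ it cannot escape above $r + 3\epsilon$, because a greedy increase to the larger option $b$ can only happen when $\tilde q_{n+1}$ is above the midpoint of the two options, forcing $b < r + 2\epsilon$. With that argument, incidentally, you do not need the slowed sequence at all; the greedy scheme converges with the raw computable approximation $q_n$, since the argument only uses that $q_n$ eventually enters $(r-\epsilon, r+\epsilon)$. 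So the pre-processing step you regard as the fix is both unnecessary and insufficient to make the heuristic you stated literally true; what actually does the work is the eventual stability of the target, not its step-by-step Lipschitz bound.
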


\begin{proof} If $A$ is computable  then we can compute
  \[ q_n = \rho_n(A) = \frac{|\{ k : k \le n,\ k \in A \}|}{n+1} \] for
  all $n$.  Thus, if $\rho(A) = lim_{n \to \infty} \rho_n(A)$ exists,
  its value $r$ is a $\Delta_{2}^{0}$ real.

  We must now show that if $r = lim_n q_n$ is the limit of a
  computable sequence of rationals in the interval $(0,1)$, there is a
  computable set $A$ with $\rho(A) = r$.  We define a computable
  increasing sequence $\{ s_n \}$ of positive integers such that
\[ \left|   \frac{ |A \lceil s_n | }{s_n + 1} -   q_n \right| \le \frac{1}{n}
    \mbox{ \ and \ }  lim_{n \to \infty} \frac{ |A \lceil s_n | }{s_ n+1}  = r. \]
Take $s_1 = 1$ and put $0$ in $A$.  If $A \lceil s_n $ is already defined
there are two cases.

    If $\frac{ |A \lceil s_n | }{s_n + 1} < q_{n+1} $ find the least $k$ such that
    \[ \frac{ |A \lceil s_n | + k}{s_n + k+1} \ge q_{n+1} .\] (Such a
    $k$ exists because $q_{n+1} < 1$.)  Let $s_{n+1} = s_n + k$ and let
    $A \lceil s_{n+1} = A \lceil s_n \cup \{ s_n + 1, \dots , s_n + k \}
    $.

   If $\frac{ |A \lceil s_n | }{s_n + 1} \ge q_{n+1} $ find the least $k$ such that
   \[ \frac{ |A \lceil s_n | }{s_n + k + 1} < q_{n+1} .\] Let $s_{n+1} =
   s_n + k$ and let $A \lceil s_{n+1} = A \lceil s_n $.  (We add no
   new elements to $A$.)  

   Since $s_n \ge n$ we have $|\rho_{s_n(A)}-q_n| \leq \frac{1}{n}$
   for all $n$.  It follows that $\rho_{s_n}(A)$ approaches $\lim_n q_n =
   r$ as $n \to \infty$.  Furthermore, by construction, $\rho_k(A)$ is
   monotone increasing or decreasing on each interval $(s_n,
   s_{n+1}]$, so that $\rho_k(A)$ is between $\rho_{s_n}(A)$ and
   $\rho_{s_{n+1}}(A)$ whenever $s_n < k < s_{n+1}$.  Hence $\rho_k(A) \to r$
   as $k \to \infty$, so $\rho(A) = r$.
   
\end{proof}

It is easily seen that every c.e.~set of upper density $1$ has a
computable subset of upper density $1$, and this makes it tempting to
conjecture that every c.e.~set of density $1$ has a computable subset
of density $1$.  Our next result is a refutation of this conjecture.
This result has several important corollaries, and the technique of
its proof will be used to show in Theorem \ref{d1} that there is a set
which is generically computable but not coarsely computable.

\begin{thm}\label{nosubset} There exists a c.e.~set $A$ of density $1$ which 
  has no computable subset of density $1$.
\end{thm}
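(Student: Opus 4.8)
The plan is to diagonalize column by column, using the partition $\{R_e\}$ so that column $R_e$ is devoted entirely to the single requirement ``$W_e := \Phi_e^{-1}(1)$, if total, is not a density-$1$ subset of $A$.'' Identify $R_e$ with $\omega$ through the order isomorphism $j \mapsto r_e(j) := (2j+1)2^e$, and construct, uniformly in $e$, a c.e.\ set $C^{(e)} \subseteq \omega$ with $\overline{C^{(e)}}$ finite; then put $0 \in A$ and set $A \cap R_e = r_e(C^{(e)})$. The virtue of splitting into columns is that the density bookkeeping becomes trivial: $\overline A$ is the disjoint union of the sets $r_e(\overline{C^{(e)}})$, the $e$th of which is a finite subset of $R_e$, and since $\overline\rho\bigl(\bigcup_{e \ge N} R_e\bigr) = 2^{-N} \to 0$, Lemma~\ref{rca} (restricted countable additivity) yields $\rho(\overline A) = \sum_e 0 = 0$, that is, $\rho(A) = 1$. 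That $A$ is c.e.\ will be immediate from the construction.

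Fix $e$ and suppose $W_e$ were a total computable set with $W_e \subseteq A$ and $\rho(W_e) = 1$. Transferring densities through $r_e$ and using that $\overline{W_e}$, hence $\overline{W_e} \cap R_e$, has density $0$, the total computable set $U := \{\, j : \Phi_e(r_e(j)) \downarrow = 1 \,\}$ then satisfies $U \subseteq C^{(e)}$ and $\rho(U) = 1$. So it is enough to guarantee, for each $e$ with $\Phi_e$ total and $\{0,1\}$-valued, that $U \not\subseteq C^{(e)}$ or $\rho(U) \ne 1$. Inside column $R_e$ I would run through a sequence of disjoint ``fresh'' blocks $B^{(0)}, B^{(1)}, \dots \subseteq \omega$: each $B^{(k)}$ is chosen, at the moment it becomes active, to lie entirely above $B^{(k-1)}$ and above everything already enumerated into $C^{(e)}$, and large enough that $B^{(k)} \supseteq [N_k, 2N_k)$ for $N_k = \min B^{(k)}$, so that $\bigcup_k B^{(k)}$ has upper density at least $\tfrac12$. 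While $B^{(k)}$ is active we withhold it from $C^{(e)}$, enumerate everything else into $C^{(e)}$, and wait for $\Phi_e$ to converge on all of $r_e(B^{(k)})$. If some value there is $1$, we permanently reserve one such point (achieving $U \not\subseteq C^{(e)}$) and stop; if the values are all $0$, we release $B^{(k)}$ into $C^{(e)}$ and pass to a new fresh block; if convergence never completes, we stay on $B^{(k)}$ forever, still enumerating its converged points and everything outside it. If $\Phi_e$ is total the ``never completes'' branch cannot occur, so we either stop, and then $U \not\subseteq C^{(e)}$, or pass through infinitely many blocks, and then $U$ is disjoint from every $B^{(k)}$, so $\overline U \supseteq \bigcup_k B^{(k)}$, whence $\underline\rho(U) \le \tfrac12$ and $\rho(U) \ne 1$ — a contradiction with $\rho(U) = 1$ either way. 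Moreover, in every case (total or not) $\overline{C^{(e)}}$ is finite: it is at most the single reserved point, or a subset of the one block on which we get stuck.

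The crux — essentially the only delicate point — is precisely this last claim: a badly behaved $\Phi_e$ (partial, or total but arbitrarily slow to converge) must not be able to force $\overline{C^{(e)}}$ to be infinite, since that would wreck $\rho(A) = 1$. This is exactly what choosing each new block ``fresh'' secures: we never prematurely enumerate a point we might later wish to reserve, and if we get stuck on a block then only that single finite block is withheld from $C^{(e)}$ while literally everything else is enumerated into it. Everything else is routine: checking that the enumeration is uniformly computable, so that $A$ is c.e., and the elementary transfer of densities across the maps $r_e$. (The promised corollary then follows: $A$ is a c.e.\ set of density $1$, so it is generically computable, yet any generic algorithm with computable domain $D$ would give the computable set $A \cap D$, which has density $1$ since $\overline A \cap D$ has density $0$.)
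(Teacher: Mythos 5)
Your proof is correct, and it shares the core architecture with the paper's: partition $\omega \setminus \{0\}$ into the columns $R_e$ so that each column handles one requirement independently, keep $R_e \setminus A$ finite, and apply restricted countable additivity (Lemma~\ref{rca}) to conclude $\rho(A)=1$. The local diagonalization strategy on each column is, however, genuinely different. You diagonalize directly against total $\{0,1\}$-valued $\Phi_e$: withhold a ``fresh'' block from $C^{(e)}$, wait for $\Phi_e$ to converge on its image, and then either reserve one point where $\Phi_e$ answers $1$ (so $\Phi_e^{-1}(1) \not\subseteq A$) or release the block, which is then entirely disjoint from $\Phi_e^{-1}(1)$; the released blocks accumulate to upper density $\frac{1}{2}$. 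The paper's requirement $N_e$ is stronger --- it prevents $A$ from containing any \emph{co-c.e.}\ subset of density $1$ --- and its mechanism never waits for a total function to converge: it raises the restraint $r(e,s)$ exactly when $W_e \cup A$ is observed to cover $R_e \lceil r(e,s)$, at which moment the withheld half of that initial segment of $R_e$ must already lie in $W_e$. The paper's version is cleaner in that it sidesteps convergence issues entirely and yields the stronger ``no co-c.e.\ subset'' conclusion for free, but both give the theorem as stated. Two small points to tidy in yours: (1) the case split ``some value is $1$ / all values are $0$ / convergence never completes'' is not exhaustive --- when $\Phi_e$ converges on all of $r_e(B^{(k)})$ with no value equal to $1$ you should release the block even if some values differ from $0$, noting that a non-$\{0,1\}$ value makes the requirement vacuous anyway; (2) ``enumerate everything else into $C^{(e)}$'' should be read as a dovetailed enumeration over stages rather than an instantaneous dump of an infinite set. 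Neither affects the correctness of the argument.
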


\begin{proof} We will construct $A$ so that it does not contain any
  co-c.e.~subset of density $1$.  We will heavily use our partition of
  unity $\{ R_n \}$.  To ensure that $A$ has density $1$, we impose the 
following infinitary positive requirements:
\[ P_n :  R_n \subseteq^* A \]
where $B \subseteq^* A$ means that $B \diagdown A$ is finite.  These
   requirements ensure that $A$ has density $1$ because (if $0 \in A$)
\[ \overline{A} = \overline{A} \cap \bigcup_{n \in \omega} R_n  
=  \bigcup_{n \in \omega} (\overline{A} \cap  R_n) \]
and the last union has density $0$ by restricted countable additivity
(Theorem \ref{rca}).  

Let $\{ W_e \}$ be a standard enumeration of all c.e.~sets.  We must
ensure that if $\overline{W_e} \subseteq A$ (i.e. $W_e \cup A =
\omega)$, then $\overline{W_e}$ does not have density $1$,
(i.e. $W_e$ does not have density $0$).  Since $R_e$ has positive
density, it suffices to meet the following negative requirement $N_e$:
If $W_e \cup A = \omega$ then $W_e$ does not have upper density
$0$ on $R_e$.

The usefulness of the sets $R_e$ here is that the positive requirement
$P_e$ puts only elements of $R_e$ into $A$ and the negative
requirement $N_e$ keeps only elements of $R_e$ out of $A$.  Since the
$R_e$'s are pairwise disjoint, this eliminates the need for the usual
combinatorics of infinite injury constructions and indeed allows the
construction of $A$ to proceed independently on each $R_e$.  The idea
of the proof is that we can make the density of $A$ low on an interval
within $R_e$ by restraining $A$ on that interval, and at the same time
starting to put the rest of $R_e$ into $A$.  If eventually the
interval is contained in $W_e \cup A$, we have found an interval where
$W_e$ has high density and can start over with a new interval,
Otherwise, $W_e \cup A \neq \omega$, and we meet the requirement
vacuously with a finite restraint.

We construct each subset $A_e = A \cap R_e$, the $e$-th part of $A$,
in stages.   Initially, each $A_{e,0}$ is empty
 and the constraint $r(e,0)$ is the least
element of $R_e$. 

At stage $s$, check whether or not $W_{e, s+1} \cup A_{e,s}$
fills up $R_e$ below $r(e,s)$, that is, whether or not
$A_{e,s} \cup W_{e,s+1} \supseteq R_e \lceil r(e,s)$.  If not,
then $A_{e,s+1}$ is $A_{e,s} $ together with the first element of $R_e$ which
is greater than $r(e,s)$ and which is not already in $A_{e,s}$. Set
$r(e,s+1) = r(e,s)$ in this case.

If $A_{e,s} \cup W_{e,s+1} \supseteq R_e \lceil r(e,s)$, then
$A_{e,s+1}$ is $A_{e,s} \cup (R_e \lceil r(e,s))$.  Now choose $r(e,s+1)$
large enough so that $r(e,s+1) > r(e,s)$ and $A_{e,s+1}$ has density
less than or equal to $1/2$ on $R_e \lceil r(e,s+1)$.

For each $e$ there are two possibilities.  The first is that $ lim_{s}
r(e.s) = \alpha_e \in \omega $.  In this case note that all
elements of $R_e$ which are greater than $\alpha_e$ are put into $A_e =
A \cap R_e$.  Thus we indeed have $R_e \subseteq^* A$.  The negative
requirement $N_e$ is met vacuously because $W_e \cup A \neq
\omega$.

The second possibility is that $ lim_{s} r(e,s) = \infty $.  In this
case $W_e \cup A_e$ fills up arbitrary large initial intervals of
$R_e$.  So $R_e \subseteq A$ by construction and $W_e$ has positive
upper density on $R_e$ since it must supply at least $1/2$ of the
elements of arbitrarily large initial intervals of $R_e$.  Namely,
when $r(e,s)$ takes on a new value, at most half of the elements of
$R_e$ less than or equal to $r(e,s)$ are in $A$, and no elements of $R_e$
less than or equal to $r(e,s)$ enter $A$ until every number in $R_e$ less 
than or equal to $r(e,s)$ has been enumerated in $W_e \cup A$, so at least
half of these numbers have been enumerated in $W_e$.  This process occurs
for infinitely many values of $r(e,s)$.
\end{proof}

This theorem has two immediate corollaries.  The first follows from
the fact that any c.e.~set of density $1$ is generically computable.

\begin{cor} Generically computable sets need not be densely approximable
by computable sets.
\end{cor}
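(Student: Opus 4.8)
The plan is to feed Theorem~\ref{nosubset} directly into the definition of dense approximability. Let $A$ be the c.e.\ set of density $1$ with no computable subset of density $1$ produced by Theorem~\ref{nosubset}. Since $A$ is a c.e.\ set of density $1$, the Corollary following Proposition~\ref{approx} shows that $A$ is generically computable. So it suffices to show that $A$ is \emph{not} densely approximable by computable sets; this will be argued by contradiction.

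Suppose $A$ were densely approximable by computable sets. Then there are computable sets $C_0 \subseteq \overline{A}$ and $C_1 \subseteq A$ with $C_0 \cup C_1$ of density $1$. Since $A$ has density $1$, its complement $\overline{A}$ has density $0$, and hence the subset $C_0 \subseteq \overline{A}$ has density $0$ as well. The sets $C_0$ and $C_1$ are disjoint (one is contained in $\overline{A}$, the other in $A$), and $C_1 = (C_0 \cup C_1) \setminus C_0$, so $\rho(C_1)$ exists; by finite additivity of density,
\[ \rho(C_1) = \rho(C_0 \cup C_1) - \rho(C_0) = 1 - 0 = 1. \]
Thus $C_1$ is a computable subset of $A$ of density $1$, contradicting the choice of $A$ in Theorem~\ref{nosubset}. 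Hence $A$ is generically computable but not densely approximable by computable sets.

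There is essentially no serious obstacle here: the statement is a genuine corollary, and the only thing to be careful about is verifying that $\rho(C_0)$ (and therefore $\rho(C_1)$) exists, which follows because $C_0$ is sandwiched in a set of density $0$. If one wanted to avoid even referencing the density of $C_0$, one could instead note directly that $C_1$ has lower density at least $\underline\rho(C_0\cup C_1) - \overline\rho(C_0) = 1 - 0$, so $C_1$ has density $1$, again contradicting Theorem~\ref{nosubset}.
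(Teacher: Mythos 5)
Your proposal is correct and is exactly the argument the paper has in mind; the paper simply declares the corollary "immediate" once one knows that every c.e.\ set of density $1$ is generically computable, and your proof spells out the (easy but worth stating) step that when $A$ has density $1$, the "no" side $C_0\subseteq\overline A$ has density $0$ and hence the "yes" side $C_1\subseteq A$ must have density $1$, contradicting Theorem~\ref{nosubset}.
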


\begin{cor} There exists a generically computable set $A$ of density $1$
such that no generic algorithm for $A$ has computable domain.
\end{cor}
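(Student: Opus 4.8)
The plan is to read this off directly from Theorem~\ref{nosubset}. Let $A$ be the c.e.~set of density~$1$ produced there, so that $A$ has no computable subset of density~$1$. Since $A$ is c.e.~and has density~$1$, it is generically computable by the corollary to Proposition~\ref{approx} stating that every c.e.~set of density~$1$ is generically computable; thus the only point left to establish is that no generic algorithm for $A$ can have a computable domain.

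Suppose toward a contradiction that $\Phi$ is a partial computable function which is a generic description of $A$ and whose domain $D$ is computable. I would consider the set $D_1 = \{\, x \in D : \Phi(x) = 1 \,\}$. This set is computable: to decide whether $x \in D_1$, first use the decision procedure for $D$ to check whether $x \in D$, and if so compute $\Phi(x)$ (which converges, since $x \in D$) and test whether its value is $1$. Since $\Phi$ is a generic description of $A$, every $x \in D_1$ satisfies $\chi_A(x) = \Phi(x) = 1$, so $D_1 \subseteq A$.

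It then remains only to check that $D_1$ has density~$1$, which is the one computation in the argument. Put $D_0 = D \setminus D_1 = \{\, x \in D : \Phi(x) = 0 \,\} \subseteq \overline{A}$. Since $A$ has density~$1$, $\overline{A}$ has density~$0$, so $\overline{\rho}(D_0) = 0$ and hence $\rho(D_0) = 0$. As $D_0$ and $D_1$ partition $D$, we have $\rho_n(D_1) = \rho_n(D) - \rho_n(D_0)$ for every $n$; letting $n \to \infty$ and using $\rho(D) = 1$ gives $\rho(D_1) = 1$. Thus $D_1$ is a computable subset of $A$ of density~$1$, contradicting the choice of $A$. I do not anticipate any real obstacle here: all the substance lies in Theorem~\ref{nosubset}, and the present argument is just the observation that a computable domain for $\Phi$ would let us split off a computable subset of $A$, namely the part of the domain on which $\Phi$ answers ``Yes''.
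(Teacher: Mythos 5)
Your proof is correct and follows the same approach as the paper: take the set $A$ from Theorem~\ref{nosubset} and observe that a generic description with computable domain would yield the computable subset $\{x : \Phi(x)\!\downarrow\, = 1\}$ of $A$ of density $1$, a contradiction. You have simply spelled out the density computation for $D_1$ that the paper leaves implicit.
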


\begin{proof} Let $A$ be the c.e.~set of the Theorem above.  If $\Phi$
  were a generic algorithm for $A$ with computable domain then $\{ x |
  \Phi (X) \downarrow = 1 \}$ would be a computable subset of $A$ with
  density $1$, a contradiction.
\end{proof}

\begin{obs} A set $A$ is generically computable by a partial algorithm
  with computable domain if and only if $A$ is densely approximable by
  computable sets.
\end{obs}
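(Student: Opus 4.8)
The plan is to follow the proof of Proposition \ref{approx} essentially verbatim, simply keeping track of the computability of the domain at each step; the statement is an ``if and only if,'' so there are two directions, both of which amount to bookkeeping once the right sets are named.

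For the forward direction I would start from a partial computable generic description $\Phi$ of $A$ whose domain $D$ is a \emph{computable} set, and set
\[ C_1 = \{x \in D : \Phi(x) = 1\}, \qquad C_0 = \{x \in D : \Phi(x) = 0\}. \]
The key observation is that $C_0$ and $C_1$ are computable: to decide membership of $x$ in either set, first check (decidably) whether $x \in D$, and if so run the computation $\Phi(x)$, which is guaranteed to halt precisely because $x \in D$. Correctness of $\Phi$ gives $C_1 \subseteq A$ and $C_0 \subseteq \overline{A}$, and $C_0 \cup C_1 = D$ has density $1$ since $D$ is generic. Hence $A$ is densely approximable by computable sets.

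For the converse I would take computable sets $C_0 \subseteq \overline{A}$ and $C_1 \subseteq A$ with $C_0 \cup C_1$ of density $1$; note these are automatically disjoint, one lying in $A$ and the other in $\overline{A}$. Define $\Phi(x) = 1$ for $x \in C_1$, $\Phi(x) = 0$ for $x \in C_0$, and leave $\Phi(x)$ undefined otherwise. Then $\Phi$ is partial computable (membership in $C_0$ and $C_1$ is decidable), its domain is the computable set $C_0 \cup C_1$, which has density $1$, and $\Phi$ agrees with $\chi_A$ wherever it converges. So $\Phi$ witnesses that $A$ is generically computable by a partial algorithm with computable domain.

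The only point that needs even a moment's care — and the closest thing here to an obstacle — is the remark in the forward direction that restricting a partial computable function to its domain yields a computable object \emph{when that domain is itself computable}: this combines the semidecidability of halting with the a priori guarantee, from $x \in D$, that the computation terminates. Everything else is immediate from the closure of the computable sets under Boolean combinations and from the elementary properties of density already recorded above.
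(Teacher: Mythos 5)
Your proof is correct, and it follows the same route the paper implicitly intends: the paper actually states this Observation without giving a proof, precisely because it is the computable-domain refinement of Proposition~\ref{approx} whose two-line argument you have reproduced faithfully, tracking computability of the domain throughout. The one genuinely nontrivial point, which you correctly isolate, is that a partial computable function restricted to a \emph{computable} domain has computable value sets $\Phi^{-1}(0)$ and $\Phi^{-1}(1)$, since the a priori guarantee of termination on $D$ together with decidability of $D$ itself upgrades the usual c.e.\ preimages to computable ones.
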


\begin{thm} \label{d1} There is a generically computable c.e.~set $A$
  which is not coarsely computable.
\end{thm}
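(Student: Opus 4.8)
The plan is to adapt the construction of Theorem \ref{nosubset}, replacing its positive-density requirement on $W_e$ by a diagonalization that keeps $A$ far from the putative computable set $\Phi_e^{-1}(1)$. Every computable set equals $\Phi_e^{-1}(1)$ for some $e$ with $\Phi_e$ total and $\{0,1\}$-valued, so it is enough to meet, for every such $e$, the requirement
\[ N_e\colon\qquad \overline{\rho}\!\left(A \triangle \Phi_e^{-1}(1)\right) > 0 ; \]
and since $\rho(R_e) = 2^{-(e+1)} > 0$, it suffices to arrange that $A \triangle \Phi_e^{-1}(1)$ has positive upper density \emph{within} $R_e$. Simultaneously I must produce c.e.\ sets $C_0 \subseteq \overline{A}$ and $C_1 \subseteq A$ with $\rho(C_0 \cup C_1) = 1$, so that $A$ is generically computable by Proposition \ref{approx}. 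I take $C_1 = A$ and let $C_0$ consist of $0$ together with every number that the construction explicitly and permanently keeps out of $A$; then generic computability reduces to proving that the ``undecided'' set $U = \omega \setminus (C_0 \cup C_1)$ has density $0$.

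The construction runs independently on each $R_e$, with $0 \notin A$, and on $R_e$ it uses a movable restraint $r(e,s)$ exactly as in Theorem \ref{nosubset}. Initially $r(e,0)$ is the least element of $R_e$ and $A \cap R_e$ is empty. At stage $s$: if $\Phi_{e,s}$ has not yet converged to a value in $\{0,1\}$ on every element of $R_e \lceil r(e,s)$, do a \emph{filling step} --- put into $A$ the least element of $R_e$ lying above $r(e,s)$ and not yet in $A$, and set $r(e,s+1) = r(e,s)$. If instead $\Phi_e$ has converged to a $\{0,1\}$-value on all of $R_e \lceil r(e,s)$, do a \emph{diagonalization step} --- for each $x \in R_e \lceil r(e,s)$ not already in $A \cup C_0$, put $x$ into $A$ if $\Phi_e(x) = 0$ and put $x$ into $C_0$ (permanently) if $\Phi_e(x) = 1$ --- and then pick $r(e,s+1) > r(e,s)$ so large that $r(e,s+1)$ exceeds every element currently in $A \cap R_e$ and that the elements which entered $A \cap R_e$ by filling steps so far form at most a $2^{-s}$ fraction of $R_e \lceil r(e,s+1)$. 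Such an $r(e,s+1)$ exists because $|R_e \lceil n| \to \infty$; this clause is the analogue of the ``density $\leq 1/2$'' clause of Theorem \ref{nosubset}. Only finitely many numbers are acted on per stage and nothing is ever removed from $A$ or $C_0$, so $A$ and $C_0$ are c.e.

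The verification splits on $\lim_s r(e,s)$. If this limit is a finite number $\alpha_e$, then from some stage on only filling steps occur on $R_e$, so every element of $R_e$ above $\alpha_e$ eventually enters $A$ and $U \cap R_e \subseteq R_e \lceil \alpha_e$ is finite; moreover this outcome is impossible when $\Phi_e$ is total and $\{0,1\}$-valued (then for each $r$ the finitely many values of $\Phi_e$ on $R_e \lceil r$ all converge to $\{0,1\}$-values eventually, forcing the restraint past $r$), so $N_e$ need not be met here. If the limit is $\infty$ --- which happens in particular whenever $\Phi_e$ is total and $\{0,1\}$-valued --- there are infinitely many diagonalization steps; at the diagonalization step with restraint $r = r(e,s)$, every element of $R_e \lceil r$ becomes decided, every element of $R_e \lceil r$ decided through its $\Phi_e$-value lies in $A \triangle \Phi_e^{-1}(1)$, and by the preceding choice of restraint the elements of $R_e \lceil r$ that came in by filling form a fraction of $R_e \lceil r$ tending to $0$ as $s$ ranges over the diagonalization stages; hence (using $r \to \infty$) $\limsup_s \rho_{r(e,s)}\!\left(A \triangle \Phi_e^{-1}(1)\right) \ge \rho(R_e) > 0$, so $N_e$ is met, and $U \cap R_e = \emptyset$. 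In all cases $U \cap R_e$ is finite; since the $R_e$ are pairwise disjoint and $\overline{\rho}\!\left(\bigcup_{e \ge N} R_e\right) = 2^{-N} \to 0$, Lemma \ref{rca} applied to $\{U \cap R_e\}_e$ gives $\rho(U) = \sum_e \rho(U \cap R_e) = 0$, so $\rho(C_0 \cup C_1) = 1$. Thus $A$ is generically computable, and it is not coarsely computable because for every total $\{0,1\}$-valued $\Phi_e$ the set $A \triangle \Phi_e^{-1}(1)$ has positive upper density and so is not negligible.

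The step I expect to be the main obstacle is the tension between the filling steps and the diagonalization. Filling is what prevents a non-total $\Phi_e$ from leaving an infinite, hence positive-density-in-$R_e$, undecided stretch of $R_e$; but filling commits numbers to $A$ without consulting $\Phi_e$, so on such a stretch $A$ may accidentally agree with $\Phi_e$ throughout. The Theorem \ref{nosubset}-style restraint jump is exactly what keeps this harmless: one must check that after the jump all later filling goes strictly above the new restraint --- which holds because the new restraint exceeds the current maximum of $A \cap R_e$ --- so that the ``$\le 2^{-s}$ fraction'' property of the already-filled part of $R_e \lceil r(e,s+1)$ survives until the next diagonalization, and one must check that numbers placed in $C_0$ are never afterwards enumerated into $A$. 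Confirming that the two outcomes for $\lim_s r(e,s)$ are exhaustive and that the displayed $\limsup$ estimate really attains $\rho(R_e)$ are then routine.
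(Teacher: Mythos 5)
Your proposal is correct and takes essentially the same route as the paper's proof: the paper likewise partitions $\omega$ into the $R_e$, does filling steps on $R_e$ while waiting for $\Phi_e$ to converge and then diagonalizes against $\Phi_e^{-1}(1)$ on the undecided part of $R_e \lceil r(e,s)$, using a restraint jump to reconcile the density-$1$ requirement with the positive-upper-density requirement for $A \triangle \Phi_e^{-1}(1)$. The only differences are notational (the paper builds a pair of disjoint c.e.\ sets $A_0$, $A_1$ with $A_0$ in the role of your $C_0$ and $A = A_1$) and a slightly different restraint-jump criterion (the paper keeps the \emph{decided} fraction of $R_e \lceil r(e,s+1)$ at most $1/2$, while you keep the \emph{filled} fraction at most $2^{-s}$), neither of which changes the substance of the verification.
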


\begin{proof} The proof is similar to that of the previous theorem.
We will construct disjoint c.e.~sets $A_0, A_1$ such that

\[ A_0 \cup A_1 \mbox{ has density \ } 1 \mbox{\ and \ } A_1
\mbox{ is not coarsely computable.} \]

Note that both $A_0$ and $A_1$ are generically computable since they
are disjoint c.e.~sets and their union has density $1$.  So it will
follow that $A_1$ is generically computable but not coarsely
computable. We now have positive requirements
\[ P_e:  R_e \subseteq^* (A_0 \cup A_1) \]
and negative requirements
\[ N_e: \mbox{\ If \ } \Phi_e \mbox{ \ is total then \ } \Phi_{e}^{-1} (1) \triangle A_1
\mbox{ \ is not of density \ } 0. \]

Satisfaction of the positive requirement suffices to ensure that $A_0
\cup A_1$ has density $1$ as in the proof of Theorem \ref{d1}.  It is
clear that satisfaction of all of the negative requirements implies that $A_1$
is not coarsely computable.

We again have a restraint function $r(e,s)$.  Initially, each
$A_{e,0}$ is empty and the restraint $r(e,0)$ is the least element of
$R_e$.  At stage $s$, for each $e \leq s$, check whether
\[ \mbox{\ Domain \ } (\Phi_{e,s+1}) \supseteq R_e \lceil r(e,s) \] If
so, let $F$ be the set of elements of $R_e \lceil r(e,s)$ which are
not already in $A_0 \cup A_1$.  Put all elements of $F \cap
\Phi_e^{-1}(1)$ into $A_0$ and all other elements of $F$ into $A_1$.
Since by construction at least half of the elements of $ R_e \lceil
r(e,s)$ are in $F$, and $F \subseteq \Phi_e^{-1}(1) \triangle A_1$,
this action ensures that at least half of the elements of $ R_e \lceil
r(e,s)$ are in $\Phi_e^{-1}(1) \triangle A$.  Set
$r(e,s+1)$ to be the least element of $R_e$ such that at most half of
the elements of $R_e \lceil r(e,s+1)$ are in $A_{0,s+1} \cup
A_{1,s+1}$.

If
\[ \mbox{\ Domain \ } (\Phi_{e,s+1} )\nsupseteq R_e \lceil r(e,s) \] 
then put into $A_1$ the least element of $R_e$ which is greater than $r(e,s)$ and which is
not already in $A_1$.  Set $r(e,s+1) = r(e,s)$.

The proof that the positive requirements $P_e$ are met is exactly as
in the proof of Theorem \ref{d1}.  Hence $A_0 \cup A_1$ has density $1$.

It remains to show that each negative requirement $N_e$ is met.
Suppose that $\Phi_e$ is total.  Then by construction, there are
infinitely many $s$ with $r(e,s+1) > r(e,s)$, and so $\lim_s r(e,s) =
\infty$.  For each such $s$, the construction guarantees that at least
half of the elements of $ R_e \lceil
r(e,s)$ are in $\Phi_e^{-1}(1) \triangle
A_1$.   Thus the latter set has lower density at least $\frac{1}{2}$ on
$R_e$ and hence has positive lower density on $\omega$.

\end{proof}

\section{Relative Generic Computability}

  As almost always in computability theory, the previous results
relativize to generic computability using an arbitrary oracle.

\begin{defn} A set $B$ is \emph{generically} $A$-\emph{computable} if
  there exists a generic description $\Phi$ of $B$ which is a
  partial computable function relative to $A$.  Also, $B$ is
  \emph{coarsely} $A$-\emph{computable} if it is generically
  similar to a set computable from $A$.
\end{defn}

Using Post's Theorem, we see that a set $A$ is generically
$0^{(n)}$-computable if and only if it is densely approximable by
$\Sigma_{n+1}^{0}$ sets and $A$ is coarsely $0^{(n)}$-computable if
and only if it is generically similar to a $\Delta_{n+1}^{0}$ set.
Thus the previous results show that for every $n \ge 0$ there is a
$\Sigma_{n+1}^{0}$ set of density $1$ which is not densely
approximable by $\Delta_{n+1}^{0}$ sets.  Also, there are generically
$0^{(n)}$-computable sets which are not coarsely $0^{(n)}$-computable
and coarsely $0^{(n)}$-computable sets which are not generically
$0^{(n)}$-computable.

\begin{defn} Given a set $A$ the \emph{generic class} $\widehat{G}(A)$
  of $A$ is the family of all subsets of $\omega$ which are
  generically $A$-computable, that is, generically computable by
  oracle Turing machines with an oracle for $A$.
\end{defn}

\begin{obs} $A \le_T B$ if and only if $\widehat{G}(A) \subseteq
  \widehat{G}(B)$.
\end{obs}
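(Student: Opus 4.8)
The plan is to prove the two implications separately; the reverse implication is essentially the relativization of Observation~\ref{nzd1}, and the forward implication is a routine relativization argument.

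First I would handle the direction $A \le_T B \implies \widehat{G}(A) \subseteq \widehat{G}(B)$. Given $C \in \widehat{G}(A)$, fix a generic description $\Phi$ of $C$ that is partial computable relative to $A$. Since $A \le_T B$, there is a $B$-computable procedure deciding membership in $A$; substituting it for each oracle query in a machine computing $\Phi$ yields a partial $B$-computable function with the same domain and the same values, hence another generic description of $C$. Thus $C \in \widehat{G}(B)$, and so $\widehat{G}(A) \subseteq \widehat{G}(B)$.

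For the converse, assume $\widehat{G}(A) \subseteq \widehat{G}(B)$. I would first observe that $\mathcal{R}(A) \in \widehat{G}(A)$: given $m \ge 1$ one computes the unique $k$ with $m \in R_k$ and then asks the oracle whether $k \in A$ (while $0 \notin \mathcal{R}(A)$), so $\mathcal{R}(A)$ is in fact computable from $A$ and therefore certainly generically $A$-computable. By hypothesis $\mathcal{R}(A) \in \widehat{G}(B)$, so fix a generic description $\Phi$ of $\mathcal{R}(A)$ that is partial computable relative to $B$. The domain of $\Phi$ has density $1$ while $\rho(R_n) = 2^{-(n+1)} > 0$, so $\mathrm{dom}(\Phi)$ must contain some element of $R_n$ — otherwise $R_n \subseteq \overline{\mathrm{dom}(\Phi)}$ would give $\overline{\mathrm{dom}(\Phi)}$ positive lower density. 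Hence, to decide $n \in A$ from $B$, search in parallel through the computations $\Phi(m)$ for $m \in R_n$ until one converges; such an $m$ exists, and since $m \in R_n$ we have $m \in \mathcal{R}(A) \iff n \in A$, so $n \in A$ exactly when $\Phi(m) = 1$. This is a $B$-computable decision procedure, so $A \le_T B$.

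I do not expect a genuine obstacle here: the only point needing care is that a set of density $1$ meets every $R_n$, which is immediate from $\rho(R_n) > 0$, and everything else is bookkeeping. The conceptual content is simply that $\mathcal{R}(A)$ is a member of $\widehat{G}(A)$ whose mere membership in $\widehat{G}(B)$ already codes $A \le_T B$, exactly as in the relativized form of Observation~\ref{nzd1}.
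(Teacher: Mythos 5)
Your proof is correct and follows the same approach as the paper: the forward direction is the routine relativization, and the reverse direction applies the hypothesis to $\mathcal{R}(A) \in \widehat{G}(A)$ and uses the fact that any generic description of $\mathcal{R}(A)$ converges somewhere on each $R_n$, hence computes $A$. You have merely spelled out the density argument that the paper references back to Observation \ref{nzd1}.
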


\begin{proof} It is clear that if  $A  \le_T B$ then 
 $\widehat{G}(A) \subseteq  \widehat{G}(B)$.  On the other hand,
if  $\widehat{G}(A) \subseteq  \widehat{G}(B)$ then $\mathcal{R}(A)$ is
generically computable from $B$ but a generic computation of  $\mathcal{R}(A)$
allows one to compute $A$. Hence  $A  \le_T B$.
\end{proof}

So $A \equiv_{T} B$ if and only if $\widehat{G}(A) = \widehat{G}(B)$
and if $\boldsymbol{a}$ is a Turing degree then
$\widehat{G}(\boldsymbol{a})$ is a well-defined generic class.  If $A
<_{T} B$ then Observation 1.7 shows that $\widehat{G}(A)$ is strictly
contained in $ \widehat{G}(B)$.

\begin{obs} Let $( \mathbf{D}, \le_T )$ be the set of all Turing
  degrees partially ordered by Turing reducibility and let $(
  \mathbf{G}, \subseteq )$ be the family of all generic classes
  partially ordered by set inclusion.  Then the function
  $\mathfrak{A}$ from $\mathbf{D}$ to $\mathbf{G}$ defined by
  $\boldsymbol{a} \mapsto \widehat{G}(\boldsymbol{a})$ is an order
  isomorphism.
\end{obs}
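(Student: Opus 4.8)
The plan is to show that the map $\mathfrak{A}\colon \boldsymbol{a}\mapsto \widehat{G}(\boldsymbol{a})$ is an order isomorphism between $(\mathbf{D},\le_T)$ and its image in $(\mathbf{G},\subseteq)$, i.e.\ that it is well-defined, surjective onto its stated codomain, injective, and order-preserving in both directions. Most of the work has already been done: the preceding observation states exactly that $A\le_T B \iff \widehat{G}(A)\subseteq\widehat{G}(B)$, and the remark just after it notes that $\widehat{G}(A)$ depends only on the Turing degree of $A$ (since $A\equiv_T B$ forces the two generic classes to coincide). So I would organize the proof as a short assembly of these facts.

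First I would note that $\mathfrak{A}$ is well-defined on degrees: if $A\equiv_T B$ then by the preceding observation $\widehat{G}(A)\subseteq\widehat{G}(B)$ and $\widehat{G}(B)\subseteq\widehat{G}(A)$, so $\widehat{G}(\boldsymbol{a})$ is independent of the chosen representative. Next, $\mathfrak{A}$ is order-preserving and order-reflecting simultaneously: for degrees $\boldsymbol{a},\boldsymbol{b}$ with representatives $A,B$, the preceding observation gives $\boldsymbol{a}\le_T\boldsymbol{b}\iff A\le_T B\iff \widehat{G}(A)\subseteq\widehat{G}(B)\iff \mathfrak{A}(\boldsymbol{a})\subseteq\mathfrak{A}(\boldsymbol{b})$. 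In particular $\mathfrak{A}$ is injective, since $\mathfrak{A}(\boldsymbol{a})=\mathfrak{A}(\boldsymbol{b})$ forces both inclusions, hence $\boldsymbol{a}\le_T\boldsymbol{b}$ and $\boldsymbol{b}\le_T\boldsymbol{a}$, so $\boldsymbol{a}=\boldsymbol{b}$. Finally, surjectivity onto $\mathbf{G}$ is a matter of the definition of $\mathbf{G}$: the family of generic classes is by definition $\{\widehat{G}(A):A\subseteq\omega\}=\{\widehat{G}(\boldsymbol{a}):\boldsymbol{a}\in\mathbf{D}\}$, so every element of $\mathbf{G}$ is a value of $\mathfrak{A}$. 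An order isomorphism is precisely a bijection that is order-preserving and order-reflecting, so these four points finish the argument.

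There is essentially no obstacle here; the one point worth stating carefully is why $\mathfrak{A}$ being an order embedding that is onto suffices, namely that a surjective order embedding between posets is automatically an isomorphism of posets (the inverse map is then also order-preserving because $\mathfrak{A}(\boldsymbol{a})\subseteq\mathfrak{A}(\boldsymbol{b})$ implies $\boldsymbol{a}\le_T\boldsymbol{b}$ by order-reflection). The real content of the theorem was packed into the preceding observation's use of $\mathcal{R}(A)$: a generic $B$-computation of $\mathcal{R}(A)$ must converge on some element of each $R_n$, and that answer decides $n\in A$, which is what forces $A\le_T B$ from $\mathcal{R}(A)\in\widehat{G}(B)$. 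I would simply cite that observation rather than reprove it.
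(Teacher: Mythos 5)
Your proposal is correct and follows essentially the same route as the paper: the paper's proof consists of one sentence observing that the preceding remarks give well-definedness, injectivity, and order-preservation, and that surjectivity holds by the definition of $\mathbf{G}$. You simply spell out the same assembly in more detail, including the standard fact that a surjective order embedding is an order isomorphism.
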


\begin{proof} The remarks above show that $\mathfrak{A}$ is well-defined,
  $1-1$, and order-preserving  and it is onto by definition.
\end{proof}

Recall that a Turing degree $\boldsymbol{a}$ is \emph{minimal} if
$\boldsymbol{a} > 0$ and there is no Turing degree $\boldsymbol{b}$
with $ 0 < \boldsymbol{b} < \boldsymbol{a}$.  A theorem of Spector \cite{Cooper}
shows that  there exist uncountably many minimal Turing
degrees.  We can analogously define a generic class $\widehat{G}(A)$
to be \emph{minimal} if $\widehat{G}(A) \varsupsetneqq
\widehat{G}(\emptyset)$ and there is no generic class $\widehat{G}(B)$
with $\widehat{G}(\emptyset) \varsubsetneqq \widehat{G}(B)
\varsubsetneqq \widehat{G}(A)$.  It would seem to be 
difficult to directly construct minimal generic classes but the order
isomorphism $\mathfrak{A}$ gives the following corollary of Spector's
theorem.

\begin{cor} There are uncountably many minimal generic classes.
\end{cor}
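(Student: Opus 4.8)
The plan is to deduce this immediately from Spector's theorem together with the order isomorphism $\mathfrak{A}\colon (\mathbf{D},\le_T)\to(\mathbf{G},\subseteq)$ established above. The point is entirely order-theoretic: in any partially ordered set the least element, if it exists, is order-definable (it is the unique element below every other element), so an order isomorphism must carry a least element to a least element, and hence carry atoms --- elements that cover the least element --- to atoms. Since $\mathfrak{A}$ sends the least Turing degree $\mathbf{0}$ to the least generic class $\widehat{G}(\emptyset)$, it restricts to a bijection between the minimal Turing degrees and the minimal generic classes, in exactly the senses of ``minimal'' defined above.

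In detail, I would argue as follows. Let $\boldsymbol{a}$ be a minimal Turing degree, so $\mathbf{0} < \boldsymbol{a}$ and no degree lies strictly between them. Applying $\mathfrak{A}$ and using that it is strictly order-preserving (equivalently, the fact recorded above that $A <_T B$ implies $\widehat{G}(A) \subsetneq \widehat{G}(B)$), we get $\widehat{G}(\emptyset) \subsetneq \widehat{G}(\boldsymbol{a})$. If some generic class $\widehat{G}(B)$ satisfied $\widehat{G}(\emptyset) \subsetneq \widehat{G}(B) \subsetneq \widehat{G}(\boldsymbol{a})$, then applying the inverse isomorphism $\mathfrak{A}^{-1}$ would yield a Turing degree $\boldsymbol{b}$ with $\mathbf{0} < \boldsymbol{b} < \boldsymbol{a}$, contradicting minimality of $\boldsymbol{a}$. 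Hence $\widehat{G}(\boldsymbol{a})$ is a minimal generic class. (Conversely every minimal generic class arises in this way, but only this direction is needed for the corollary.)

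Finally I would invoke Spector's theorem, which furnishes uncountably many minimal Turing degrees $\boldsymbol{a}$; since $\mathfrak{A}$ is injective, the corresponding generic classes $\widehat{G}(\boldsymbol{a})$ are pairwise distinct, so there are uncountably many minimal generic classes. I do not expect any real obstacle here; the only thing needing attention is the bookkeeping that the two notions of minimality are taken relative to the matching bottom elements $\mathbf{0}$ and $\widehat{G}(\emptyset) = \mathfrak{A}(\mathbf{0})$, which is exactly what lets the isomorphism transport one notion to the other.
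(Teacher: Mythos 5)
Your proposal is correct and follows exactly the paper's approach: the corollary is derived by transporting Spector's theorem on minimal Turing degrees through the order isomorphism $\mathfrak{A}$. You simply spell out the order-theoretic bookkeeping (least elements and atoms are preserved under order isomorphisms) that the paper leaves implicit.
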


It is important to note that relative generic computability does
\emph{not} give a notion of reducibility because it is not transitive.
It is generally false that if $A \in \widehat{G}(B)$ and $B \in
\widehat{G}(C)$ then $A \in \widehat{G}(C)$.  For example, let $A$ and
$B$ be Turing equivalent sets such that $B$ is generically computable
and $A$ is not generically computable.  (We have observed that every
nonzero Turing degree contains such sets $A$ and $B$.)  Then $A \in
\widehat{G}(B)$ and $B \in \widehat{G}(\emptyset)$, but $A \notin
\widehat{G}(\emptyset)$.  We introduce a related notion which is
transitive in the next section.

\section{Generic Reducibility}

The failure of transitivity just noted for relativized generic
computability is not surprising because the definition of $A \in
\widehat{G}(B)$ involves using a \emph{total} oracle for $B$ to
produce only a \emph{generic} computation of $A$.  This is analogous
to the failure of transitivity for the relation ``c.e.~in'', where an
oracle for $B$ is used to produce only an enumeration of $A$.  The
natural way to achieve transitivity is to have the oracle and the
output be of a similar nature.  The notion of enumeration reducibility
($\le_e$) has been well studied.  The intuitive concept of
enumeration reducibility is that $A \le_e B$ if there is a fixed  oracle
Turing machine $M$ which, given a listing of $B$ in any order on its
oracle tape, produces a listing of $A$.  From this point of view, when
the machine lists a number $n$ in $A$, it has used the membership of
$k$ in $B$ only for a finite set $D$ of values of $k$, and we can
effectively list the set of pairs $(n, D)$ for which this occurs.
This leads to a more convenient formal definition of enumeration
reducibility where we replace oracle Turing machines by c.e.~sets of
codes of such pairs.

\begin{defn} An \emph{enumeration operator} is a c.e.~set.  If $W$ is
  an enumeration operator, the elements of $W$ are viewed as coding
  pairs $\langle n, D \rangle$, where $n \in \omega$ and $D$ is a
  finite subset of $\omega$ identified with its canonical index
  $\sum_{k \in D} 2^k$.  We view $W$ as the  mapping from sets to sets 
 \[ X  \to  W(X) := \{n : (\exists D)[\langle n , D \rangle \in W \ \& \ D
  \subseteq X]\}\]
\end{defn}

We can now use enumeration operators to formally define enumeration reducibility.

\begin{defn}
$Y$ is \emph{enumeration reducible} to $X$ (written $Y \leq_e X$) if $Y = W(X)$ for some enumeration operator $W$.
\end{defn}

It is well known that the enumeration operators are closed under
composition and hence that enumeration reducibility is transitive.
Also, each enumeration operator $W$ is obviously $\subseteq$-monotone in
the sense that if $U \subseteq V$ then $W(U) \subseteq W(V)$.

We are now ready to define generic reducibility.  Recall that a
generic description of a set $A$ is a partial function $\Psi$ which
agrees with the characteristic function of $A$ on its domain and which
has a domain of density $1$.  If $\Psi$ is a partial function, let
$\gamma(\Psi) = \{ \langle a, b \rangle : \Psi(a) = b \}$, so that
$\gamma(\Psi)$ is a set of natural numbers coding the graph of $\Psi$.
A listing of the graph of a generic description of a set $A$ is called
a \emph{generic listing for } $A$.  Intuitively, the idea is that $A$
is generically reducible to $B$ if there is a fixed oracle Turing
machine $M$ which, given \emph{any} generic listing for $B$ on its
oracle tape, generically computes $A$.  It is again convenient to use
enumeration operators in the formal definition.

\begin{defn} $A$ is \emph{generically reducible} to $B$ (written $A \leq_g
  B$) if there is an enumeration operator $W$ such that, for every
  generic description $\Psi$ of $B$, $W(\gamma(\Psi)) = \gamma(\Theta)$ for
  some generic description $\Theta$ of $A$.
\end{defn} 
                                                                    
Note that $\leq_g$ is transitive because enumeration operators are
closed under composition.  (It is also easy to check transitivity
from the intuitive definition.) Thus generic reducibility  leads to a 
degree structure as usual.

\begin{defn} The sets $A$ and $B$ are \emph{generically
    interreducible}, written $A \equiv_{g} B$, if $A \leq_g B$
  and $B \leq_g A$.  The \emph{generic degree} of $A$, written
  $\deg_g(A)$, is $\{C : C \equiv_g A \}$.  Of course, the generic
  degrees are partially ordered by the ordering induced by
  $\leq_g$.
\end{defn}
 
The generic degrees have a least element ${\bf 0}_g$, and the elements
of ${\bf 0}_g$ are exactly the generically computable sets.  The
generic degrees form an upper semi-lattice, with join operation
induced by $\oplus$ where $A \oplus B = \{ 2n : n \in A \} \cup \{2n+1 : n \in B\}$.
  The following easy result gives another way in
which the generic degrees resemble the Turing degrees.

\begin{prop}
Every countable set of generic degrees has an upper bound.
\end{prop}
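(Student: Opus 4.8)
The plan is to imitate the classical proof that every countable set of Turing degrees has an upper bound, but carried out through generic listings rather than characteristic functions. Suppose we are given generic degrees $\mathbf{a}_0, \mathbf{a}_1, \dots$, and pick representatives $A_0, A_1, \dots$. The obvious first move is to form a single set $B$ that ``contains'' all the $A_i$ in separated columns, e.g. using the partition $\{R_i\}$: intuitively we want $B$ so that for each $i$, knowing a generic listing of $B$ lets us generically compute $A_i$. The key point is that a generic description of $A_i$ only needs to converge on a set of density $1$, so we have a lot of room; in particular, if we place a coded copy of $A_i$ inside $R_i$, then since $\rho(R_i) = 2^{-(i+1)} > 0$, a generic description of $B$ must converge on a subset of $R_i$ of density $1$ relative to $R_i$ (on all but a density-$0$ set), and that is exactly enough to read off a generic description of $A_i$.

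More concretely, first I would fix, for each $i$, a computable bijection $f_i : \omega \to R_i$ (these exist uniformly since $R_i$ is a computable set with computable enumeration). Define $B$ by declaring $f_i(n) \in B \iff n \in A_i$; then $B = \bigcup_i \{ f_i(n) : n \in A_i \}$ and $B \cap R_i$ is the $f_i$-image of $A_i$. Now I must check both directions of the reduction. For $A_i \le_g B$: given a generic description $\Psi$ of $B$, define $\Theta_i(n) = \Psi(f_i(n))$ (whenever the right side converges). Every answer of $\Theta_i$ is correct by definition of $B$. For the domain: $\mathrm{dom}(\Psi)$ has density $1$, so $\overline{\mathrm{dom}(\Psi)}$ has density $0$; since $f_i$ is a ``scaling'' map onto $R_i$ and $\rho(R_i) > 0$, the preimage $f_i^{-1}(\overline{\mathrm{dom}(\Psi)} \cap R_i)$ has density $0$ in $\omega$, hence $\mathrm{dom}(\Theta_i)$ has density $1$. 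This whole construction is uniform in the enumeration operator governing $\Psi$, so it is realized by a single enumeration operator $W_i$, giving $A_i \le_g B$.

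The part requiring the most care is the verification that $f_i^{-1}$ sends density-$0$ subsets of $R_i$ to density-$0$ subsets of $\omega$, because $f_i$ is not an order isomorphism onto $R_i$ in the naive sense unless chosen carefully; the clean fix is to take $f_i$ to be the \emph{unique} order-preserving bijection $\omega \to R_i$, so that $f_i(n)$ is roughly $2^{i+1} n$, and then a set $S \subseteq R_i$ with $\rho(S) = 0$ (computed against $\omega$) automatically has $f_i^{-1}(S)$ of density $0$ since $|f_i^{-1}(S) \cap [0,n]| = |S \cap [0, f_i(n)]|$ and $f_i(n) \le 2^{i+1}(n+1)$. One must also confirm that the domains of the resulting $\Theta_i$ genuinely have density $1$ and not merely positive lower density, which follows because $\overline{\mathrm{dom}(\Psi)}$ has density (not just upper density) $0$ and densities of subsets of a set of positive density behave well under the scaling. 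I expect no serious obstacle beyond bookkeeping: the generic framework is forgiving precisely because we only ever need density $1$, never computability of domains, so the columns $R_i$ of positive density are exactly the right tool, just as they were in Observations \ref{nzd1} and the theorems of Section 2.
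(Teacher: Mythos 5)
Your proposal is correct and takes essentially the same route as the paper: both define $B = \bigcup_n f_n(A_n)$ with $f_n$ the increasing bijection $\omega \to R_n$, both obtain the generic description of $A_n$ as $\Psi \circ f_n$ via an enumeration operator, and both reduce the density calculation to the observation that an order-preserving scaling between $\omega$ and $R_n$ preserves density-$1$ (equivalently density-$0$) sets. The paper phrases the last step as ``a generic set intersected with a set of positive density is generic relative to that set,'' while you unwind it explicitly via the bound $f_i(n)\le 2^{i+1}(n+1)$; these are the same argument.
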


\begin{proof} Let sets $A_0, A_1, \dots$ be given.  We must produce a
  set $B$ with $A_n \leq_g B$ for all $n$.  Let the function $f_n :
  \omega \to R_n$ enumerate $R_n$ in increasing order and define $B =
  \cup_n f_n(A_n)$. Note that since $f_n$ is $1-1$ and the $R_n$ are
  disjoint, we have $B(f_n (x)) = A_n(x)$. To see that $A_n \leq_g B$,
  let $W$ be an enumeration operator such that $W(\gamma(\Psi)) =
  \gamma(\Psi \circ f_n)$ for every partial function $\Psi$.  We must
  show that if $\Psi$ is a generic description of $B$ then $\Psi \circ
  f_n$ is a generic description of $A_n$, First, note that if
  $\Psi(f_n(x)) \downarrow$, then $\Psi(f_n(x)) = B(f_n (x)) = A(x)$, 
  and hence $\Psi \circ f_n$ agrees with the characteristic
  function of $A_n$ on its domain $D$.  It remains to show that $D$
  has density $1$.  Since $\Psi$ is a generic description, its domain
  $\widehat{D}$ has density $1$.  The increasing bijection $f_n$ from
  $\omega$ to $R_n$ is also an increasing bijection from $D$ to $\widehat{D}
  \cap R_n$.  To show that $D$ has density $1$, it thus suffices to
  show that $\widehat{D} \cap R_n$ has density $1$ in $R_n$.  This
  follows from the general fact that if $C$ is any generic set and $E$
  is any set of positive density, then $C \cap E$ is generic in $E$.
  (Just check that $E \setminus C$ is negligible in E.)
\end{proof}

We do not  know however, whether every generic degree
  bounds only countably many generic degrees.

  The Turing degrees can be embedded into the enumeration degrees by
  the mapping which takes the Turing degree of a set $A$ to the
  enumeration degree of $\gamma(\chi_A)$.  We now give an analogous
  embedding of the Turing degrees into the generic degrees.

\begin{lem} $A \le_T B$ if and only if $\mathcal{R}(A) \leq_g
  \mathcal{R}(B)$.
\end{lem}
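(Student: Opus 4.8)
The plan is to prove both directions of the equivalence, with the forward direction being essentially routine and the backward direction being where the real content lies.

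\medskip

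\noindent\textbf{The forward direction.} First I would show that $A \le_T B$ implies $\mathcal{R}(A) \le_g \mathcal{R}(B)$. The intuition is that from a generic description $\Psi$ of $\mathcal{R}(B)$ we can first recover $B$ itself on all of $\omega$ (not just generically), because any generic description of $\mathcal{R}(B)$ must converge on some element of each $R_n$, and the value there determines whether $n \in B$; then, using $A \le_T B$, we compute $A$ and hence $\mathcal{R}(A)$ totally. Formalizing this as an enumeration operator requires some care: given a listing of $\gamma(\Psi)$, we enumerate pairs $\langle a, b\rangle$ into the output as follows. To place a pair asserting $m \in \mathcal{R}(A)$ (or $m \notin \mathcal{R}(A)$), we wait until the listing of $\gamma(\Psi)$ has revealed, for enough initial values of $n$, some element $x \in R_n$ with $\Psi(x)\!\downarrow$; this gives a finite initial segment of $\chi_B$, from which (using a fixed Turing reduction of $A$ to $B$, simulated with that finite amount of oracle and checking for convergence) we may compute $\chi_A$ on an initial segment, and thence $\chi_{\mathcal{R}(A)}$ on an initial segment. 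The resulting enumeration operator $W$ has the property that $W(\gamma(\Psi))$ is the full graph of $\chi_{\mathcal{R}(A)}$ for every generic description $\Psi$ of $\mathcal{R}(B)$, which is in particular a generic description of $\mathcal{R}(A)$. The monotonicity and c.e.\ requirements are straightforward to check; the only subtlety is making sure the ``wait'' steps always eventually succeed, which they do precisely because $\operatorname{dom}(\Psi)$ has density $1$ and each $R_n$ has positive density, so $\operatorname{dom}(\Psi)$ meets every $R_n$.

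\medskip

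\noindent\textbf{The backward direction.} This is the harder half: assume $\mathcal{R}(A) \le_g \mathcal{R}(B)$ via an enumeration operator $W$, and deduce $A \le_T B$. Given the oracle $B$, I want to compute $A$. The idea is to manufacture, using $B$, a particularly nice generic description of $\mathcal{R}(B)$ — indeed the total characteristic function $\chi_{\mathcal{R}(B)}$ itself, whose graph $\gamma(\chi_{\mathcal{R}(B)})$ is computable from $B$. Feeding this into $W$, we get $W(\gamma(\chi_{\mathcal{R}(B)})) = \gamma(\Theta)$ for some generic description $\Theta$ of $\mathcal{R}(A)$; moreover, since $\gamma(\chi_{\mathcal{R}(B)}) \le_T B$ and $W$ is an enumeration operator (hence its output is c.e.\ in any oracle computing its input), $\gamma(\Theta)$ is c.e.\ relative to $B$. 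So we have a $B$-c.e.\ generic description $\Theta$ of $\mathcal{R}(A)$. Now, to decide whether $n \in A$: the set $R_n$ has positive density, and $\operatorname{dom}(\Theta)$ has density $1$, so $\operatorname{dom}(\Theta) \cap R_n \ne \emptyset$; enumerate $\Theta$ relative to $B$ until some $x \in R_n$ enters $\operatorname{dom}(\Theta)$, and output $\Theta(x)$, which equals $\chi_{\mathcal{R}(A)}(x) = \chi_A(n)$ since $x \in R_n$. This search terminates, so the procedure is a total $B$-computation of $\chi_A$, giving $A \le_T B$.

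\medskip

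\noindent\textbf{Main obstacle.} I expect the main point requiring care is the backward direction's use of the fact that $W(\gamma(\chi_{\mathcal{R}(B)}))$ is c.e.\ relative to $B$: one must be sure the definition of enumeration operator genuinely delivers this (it does — $W(X) = \{n : \exists D\, (\langle n,D\rangle \in W \wedge D \subseteq X)\}$ is $\Sigma^0_1(X)$ uniformly), and then that "$B$-c.e.\ generic description of $\mathcal{R}(A)$" suffices to recover $A$ via the positive-density argument rather than needing a $B$-computable one. Everything else — transitivity is not needed here, finite additivity and restricted countable additivity are available from the excerpt, and the "$\operatorname{dom}$ meets every $R_n$" observation follows from the remarks after the identification of $\omega$ with $\{1\}^*$ — is routine. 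A secondary care point is in the forward direction: verifying that the constructed $W$ really outputs a \emph{consistent} graph (no pair $\langle m, 0\rangle$ and $\langle m, 1\rangle$ both enumerated), which holds because all decisions are derived deterministically from a fixed Turing reduction applied to (compatible) longer and longer initial segments of $\chi_B$.
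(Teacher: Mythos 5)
Your proposal is correct and follows essentially the same route as the paper: the forward direction uses that a generic description of $\mathcal{R}(B)$ must hit every $R_n$ and hence determines $B$ (and then $\mathcal{R}(A)$) completely and uniformly, while the backward direction feeds the $B$-computable total description $\chi_{\mathcal{R}(B)}$ into the enumeration operator to obtain a $B$-c.e.\ generic description of $\mathcal{R}(A)$, from which $A$ is recovered by searching $R_n$. The paper's version is just a terser sketch of the same argument; your added care about consistency of the constructed operator and about the output being $B$-c.e.\ rather than $B$-computable is exactly the right bookkeeping.
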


\begin{proof} 
  If $\mathcal{R}(A) \leq_g \mathcal{R}(B)$ then
  $\mathcal{R}(A)$ is generically computable from a generic listing of
  $\mathcal{R}(B)$ and thus computable from $B$.  But a generic
  computation of $\mathcal{R}(A)$ allows one to compute $A$. Hence $A
  \le_T B$.  A generic listing of $\mathcal{R}(B)$ allows one to
  compute  $B$ uniformly.   Hence if $A \le_T B$ then
  $\mathcal{R}(A)$ is uniformly computable from any generic listing of
  $\mathcal{R}(B)$ and we have $\mathcal{R}(A) \leq_g
  \mathcal{R}(B)$.
\end{proof}

So $A \equiv_{T} B$ if and only if $\mathcal{R}(A) \equiv_g
\mathcal{R}(B)$, and if $\boldsymbol{a}$ is a Turing degree then
$\deg_g(\mathcal{R}(\boldsymbol{a}))$, defined as
$\deg_g(\mathcal{R}(A))$ for $A \in \boldsymbol{a}$, is
well-defined.

\begin{thm} Let $( \mathbf{D}, \le_T )$ be the set of all Turing
  degrees partially ordered by Turing reducibility and let $(
  \mathbf{I}, \leq_g )$ be the set of all generic degrees
  partially ordered by generic reducibility.  Then the function
  $\mathfrak{B}$ from $\mathbf{D}$ to $\mathbf{I}$ defined by
  $\boldsymbol{a} \mapsto \deg_g(\mathcal{R}(\boldsymbol{a}))$ is
  an order embedding.
\end{thm}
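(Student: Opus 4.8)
The plan is to derive the theorem formally from the preceding lemma, which asserts that $A \le_T B$ if and only if $\mathcal{R}(A) \leq_g \mathcal{R}(B)$; all of the mathematical content already resides in that lemma, so what remains is bookkeeping about degrees. First I would check that $\mathfrak{B}$ is well-defined as a map on degrees. If $\boldsymbol{a}$ is a Turing degree and $A, A' \in \boldsymbol{a}$, then $A \equiv_T A'$, so $A \le_T A'$ and $A' \le_T A$; applying the lemma in both directions yields $\mathcal{R}(A) \leq_g \mathcal{R}(A')$ and $\mathcal{R}(A') \leq_g \mathcal{R}(A)$, i.e.\ $\mathcal{R}(A) \equiv_g \mathcal{R}(A')$, so that $\deg_g(\mathcal{R}(A)) = \deg_g(\mathcal{R}(A'))$. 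Hence $\mathfrak{B}(\boldsymbol{a}) := \deg_g(\mathcal{R}(A))$ does not depend on the choice of representative $A$, as was already remarked in the text just before the theorem.

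Next I would verify that $\mathfrak{B}$ is both order-preserving and order-reflecting, which together is precisely the assertion that it is an order embedding. Fix representatives $A \in \boldsymbol{a}$ and $B \in \boldsymbol{b}$. If $\boldsymbol{a} \le_T \boldsymbol{b}$, then $A \le_T B$, so by the lemma $\mathcal{R}(A) \leq_g \mathcal{R}(B)$, that is, $\mathfrak{B}(\boldsymbol{a}) \leq_g \mathfrak{B}(\boldsymbol{b})$. Conversely, if $\mathfrak{B}(\boldsymbol{a}) \leq_g \mathfrak{B}(\boldsymbol{b})$, then $\mathcal{R}(A) \leq_g \mathcal{R}(B)$ (this relation between generic degrees is independent of representatives, since $\leq_g$ respects $\equiv_g$ by transitivity of $\leq_g$, already noted), so the lemma gives $A \le_T B$ and hence $\boldsymbol{a} \le_T \boldsymbol{b}$. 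Injectivity is then automatic: if $\mathfrak{B}(\boldsymbol{a}) = \mathfrak{B}(\boldsymbol{b})$, then $\mathfrak{B}(\boldsymbol{a}) \leq_g \mathfrak{B}(\boldsymbol{b})$ and $\mathfrak{B}(\boldsymbol{b}) \leq_g \mathfrak{B}(\boldsymbol{a})$ both hold, so $\boldsymbol{a} \le_T \boldsymbol{b}$ and $\boldsymbol{b} \le_T \boldsymbol{a}$, whence $\boldsymbol{a} = \boldsymbol{b}$.

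I do not expect any real obstacle here; the proof is a routine transfer of the lemma to the level of degrees. The only minor points to be careful about are the two well-definedness checks just made (that $\mathfrak{B}$ is well-defined on Turing degrees and that $\leq_g$ on generic degrees agrees with $\leq_g$ on chosen representatives), both immediate from the lemma and from transitivity of $\leq_g$. One could also note that $\mathfrak{B}$ is not claimed to be surjective or a semilattice embedding — indeed the introduction promises that this embedding is \emph{proper} — but that is a separate result and is not needed for the order-embedding statement proved here.
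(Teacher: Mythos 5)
Your proof is correct and follows exactly the same route as the paper: both derive the order embedding as a routine consequence of the preceding lemma ($A \le_T B \iff \mathcal{R}(A) \leq_g \mathcal{R}(B)$), with the paper simply compressing the well-definedness, injectivity, and order-preservation checks into one sentence ("The remarks above show...") where you spell them out. Nothing is missing and nothing is different in substance.
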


\begin{proof} The remarks above show that $\mathfrak{B}$ is well-defined,
  $1-1$, and order-preserving.
\end{proof}

It follows at once from the above observation and the existence of an
antichain of Turing degrees of the size of the continuum
(\cite{Sacks}, Chapter 2) that there is an antichain of generic
degrees of the size of the continuum.

\begin{thm} The order embedding $\mathfrak{B}$ from the Turing degrees
  to the generic degrees defined above is not surjective.
\end{thm}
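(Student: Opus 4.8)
The plan is to produce a set $B$ that is not generically computable but satisfies $\mathcal{R}(A)\not\leq_g B$ for every noncomputable $A$. This suffices: $\mathfrak{B}$ is an order embedding with $\mathfrak{B}(\mathbf{0})=\deg_g(\mathcal{R}(\emptyset))=\mathbf{0}_g$, so if $\deg_g(B)=\mathfrak{B}(\boldsymbol{a})$ then $B$ not generically computable forces $\boldsymbol{a}\neq\mathbf{0}$; but then any $A\in\boldsymbol{a}$ is noncomputable and $\mathcal{R}(A)\equiv_g B$, contradicting the second property.

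The engine is a rigidity property of the sets $\mathcal{R}(A)$. If $W$ is an enumeration operator witnessing $\mathcal{R}(A)\leq_g B$ and $\Psi$ is \emph{any} generic description of $B$, then $W(\gamma(\Psi))=\gamma(\Theta)$ for some generic description $\Theta$ of $\mathcal{R}(A)$; since $\mathrm{dom}(\Theta)$ has density $1$ while each $R_n$ has positive density, $\mathrm{dom}(\Theta)\cap R_n\neq\emptyset$, and there $\Theta$ must take the constant value $\chi_A(n)$. Hence for every $n$ the set $W(\gamma(\Psi))$ enumerates a pair $\langle m,\chi_A(n)\rangle$ with $m\in R_n$ and never enumerates $\langle m,1-\chi_A(n)\rangle$ with $m\in R_n$; in particular $\chi_A\leq_T\gamma(\Psi)$, uniformly over all generic descriptions $\Psi$ of $B$, and taking $\Psi=\chi_B$, each value $\chi_A(n)$ is already forced by a finite piece of $\chi_B$ through $W$.

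I would then build $B$ by finite extensions, meeting diagonalization requirements that make $B$ bi-immune (hence not generically computable, by Corollary \ref{bi-immune}) together with, for each enumeration operator $W$, a requirement $N_W$ defeating $W$ as a generic reduction from any $\mathcal{R}(A)$. To act on $N_W$ from a current condition $\sigma$, reserve a large fresh number $k_0$ (with $k_0>|\sigma|$) and ask whether some finite function $\tau$, compatible with $\sigma$ and with $\mathrm{dom}(\tau)\subseteq[k_0+1,\infty)$, satisfies $\langle\langle m,b\rangle,\gamma(\tau)\rangle\in W$ for some $m\in R_{k_0}$ and some $b$. If not, then for the final $B$ the generic description $\chi_B\restriction(\omega\setminus[0,k_0])$ (legitimate, since its domain is cofinite) drives $W$ to enumerate no pair at all on $R_{k_0}$, so $W(\gamma(\cdot))$ of it is not the graph of a generic description of any $\mathcal{R}(A)$ --- its ``domain'' misses $R_{k_0}$ and hence has density below $1$ --- and $N_W$ is permanently met. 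If such a $\tau$ exists, commit $\chi_B\supseteq\tau$ --- this pins $\chi_A(k_0)=b$ for the decoded $A$ --- and ask whether some further $\tau'$ compatible with the new condition yields $\langle\langle m',1-b\rangle,\gamma(\tau')\rangle\in W$ with $m'\in R_{k_0}$; if so, commit $\tau'$ as well, so that $W(\gamma(\chi_B))$ enumerates both $\langle m,b\rangle$ and $\langle m',1-b\rangle$ on $R_{k_0}$, again impossible for any $\mathcal{R}(A)$, and $N_W$ is met. Otherwise the decoded value on $k_0$ is forced, the work on $N_W$ moves to a new, larger target, and if this last alternative recurs forever the decoded $A$ is computable, once more making $\mathcal{R}(A)\equiv_g B$ impossible.

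The main obstacle, and the place requiring genuine care, is to run the bi-immunity strategies alongside the $N_W$-commitments without the two systems fighting: the committed $\tau$'s occupy uncontrolled finite sets of points, while bi-immunity needs its own room. The idea is to force the $N_W$-targets, and hence the committed domains, to be extremely sparse --- choosing each new target much larger than everything committed so far, so that the totality of $N_W$-committed points has density $0$ --- leaving a density-$1$ region for the bi-immunity strategies, and to verify that the $N_W$-part of the construction can then be carried out effectively, so that in the surviving alternative above the decoded $A$ is \emph{computable} rather than merely $\Delta^0_2$. Making this bookkeeping precise, and checking that the resulting $B$ is indeed bi-immune while every $N_W$ is met, is the heart of the proof.
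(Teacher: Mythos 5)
Your plan inverts the paper's setup (you build the set $B$ outside the range of $\mathfrak{B}$ directly, and defeat each enumeration operator as a candidate generic reduction $\mathcal{R}(A)\leq_g B$), which is a genuinely different decomposition than the paper's: there the set is built to have density $1$, to fail generic computability, and to have \emph{quasi-minimal enumeration degree}, after which a short lemma (via the semicharacteristic function) does all the work. Your rigidity observation is correct, and the case splitting in your $N_W$-strategy (no output on $R_{k_0}$; conflicting outputs on $R_{k_0}$; forced output) is sound as far as it goes.

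The real gap is in the sentence ``if this last alternative recurs forever the decoded $A$ is computable.'' Your strategy only forces and decodes $\chi_A(k_i)$ at the \emph{targets} $k_0<k_1<\dots$, which by design are fresh and sparse; $\chi_A(n)$ for nontarget $n$ is never constrained or read off, so nothing rules out the decoded $A$ being noncomputable off the targets, in which case $N_W$ is simply not met. Even on the targets the decoding uses the stage-$i$ condition $\sigma_{s_i}$, and since the bi-immunity side of the construction is not effective (a forcing construction of a bi-immune set cannot be carried out computably), you only get $A\leq_T B$, not $A$ computable --- you flag this issue yourself but do not resolve it. The fix is structural and is essentially what the paper does for its requirement $U_n$: instead of a moving sequence of targets, freeze a \emph{single} finite condition $\sigma$ for the $N_W$-requirement and ask the single $\Sigma^0_1$ question ``is there a finite $\tau$ compatible with $\sigma$, some $n$, some $m,m'\in R_n$, and some $b$ with both $\langle m,b\rangle$ and $\langle m',1-b\rangle$ in $W(\gamma(\tau))$?'' If yes, commit $\tau$ and $W$ is defeated. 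If no, then for \emph{every} $n$ the value $b$ with $\langle m,b\rangle\in W(\gamma(\tau))$ for some $m\in R_n$ and some $\tau$ compatible with $\sigma$ is unique (by a merging argument, which here requires care since $\gamma(\tau)\cup\gamma(\tau^*)$ is a graph only when $\tau$ and $\tau^*$ are compatible partial functions --- you can always merge them into a partial function compatible with $\sigma$), and the search is effective with $\sigma$ as a hardcoded finite parameter, so the decoded $A$ is computable. With a single-condition $N_W$ the commitment per requirement is finite, the sparsity bookkeeping becomes unnecessary, and bi-immunity can be achieved afterwards by any (noneffective) forcing without contaminating the decoding.
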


We must show that there is a set $A$ such that there is no set $B$
with $A \equiv_g \mathcal{R}(B)$.  Our first task is to give
conditions on $A$ (without mentioning any other sets) which imply that
there is no set $B$ with $A \equiv_g \mathcal{R}(B)$.  These
conditions involve enumeration reducibility, for which we follow Cooper
(\cite{Cooper}, Sections 11.1 and 11.3).  An enumeration degree $\bf a$
is called \emph{total} if there is a total function $f$ such that its
graph $\gamma(f)$ has degree $\bf a$. An enumeration degree $\bf a$ is
called \emph{quasi-minimal} if $\bf a$ is nonzero and every nonzero
enumeration degree ${\bf b \leq_e \bf a}$ is not total.  Thus, a set
$A$ has quasi-minimal enumeration degree if and only if $A$ is not
c.e.~and every total function $f$ with $\gamma(f) \leq_e A$ is
computable.  The next lemma gives the desired conditions on $A$.

\begin{lem} Suppose that $A$ is a set of density $1$ such that $A$
  is not generically computable and the enumeration degree of $A$ is
  quasi-minimal.  Then there is no set $B$ such that $A \equiv_g
  \mathcal{R}(B)$.
\end{lem}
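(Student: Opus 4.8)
The plan is to argue by contradiction. Suppose that $A \equiv_g \mathcal{R}(B)$ for some set $B$. I will show first that $B$ must be computable, and then that $A$ must be generically computable, contradicting the hypothesis that $A$ is not generically computable. All three hypotheses on $A$ will be used: the density of $A$ supplies a convenient generic description, quasi-minimality of its enumeration degree forces $B$ to be computable, and non-generic-computability supplies the final contradiction.

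The first step is to exploit $\rho(A) = 1$ to produce a particularly simple generic description of $A$. Let $\Phi_A$ be the partial function with domain $A$ given by $\Phi_A(x) = 1$ for $x \in A$. Since $\overline{A}$ has density $0$, the domain of $\Phi_A$ has density $1$, and $\Phi_A$ agrees with $\chi_A$ on its domain, so $\Phi_A$ is a generic description of $A$; moreover its graph $\gamma(\Phi_A) = \{\langle x, 1 \rangle : x \in A\}$ satisfies $\gamma(\Phi_A) \le_e A$ (indeed $\gamma(\Phi_A) \equiv_e A$).

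Next I would use the direction $\mathcal{R}(B) \le_g A$. Fix an enumeration operator $W$ witnessing this reduction and apply it to $\Phi_A$: by definition of $\le_g$ we get $W(\gamma(\Phi_A)) = \gamma(\Theta)$ for some generic description $\Theta$ of $\mathcal{R}(B)$, and since $W$ is an enumeration operator, $\gamma(\Theta) \le_e \gamma(\Phi_A) \le_e A$. The key point is then that \emph{every} generic description $\Theta$ of $\mathcal{R}(B)$ enumeration-computes the total function $\chi_B$: the domain of $\Theta$ has density $1$, so for each $n$ it must meet $R_n$ (which has density $2^{-(n+1)} > 0$), and at any $m \in R_n$ in the domain of $\Theta$ we have $\Theta(m) = \chi_{\mathcal{R}(B)}(m) = \chi_B(n)$, because the $R_n$ partition $\omega \setminus \{0\}$ and $\mathcal{R}(B)$ is a union of them. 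Hence the map that scans a listing of $\gamma(\Theta)$ and, for each pair $\langle m, b \rangle$ with $m \in R_n$, emits $\langle n, b \rangle$, is an enumeration operator carrying $\gamma(\Theta)$ to exactly $\gamma(\chi_B)$; so $\gamma(\chi_B) \le_e \gamma(\Theta) \le_e A$. Since $\chi_B$ is a total function and the enumeration degree of $A$ is quasi-minimal, $\gamma(\chi_B)$ must be c.e., that is, $B$ is computable.

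Finally, with $B$ computable, Observation~\ref{nzd1} gives that $\mathcal{R}(B)$ is generically computable, hence $\deg_g(\mathcal{R}(B)) = \mathbf{0}_g$; since $A \equiv_g \mathcal{R}(B)$, also $\deg_g(A) = \mathbf{0}_g$, so $A$ is generically computable, contradicting the hypothesis. I expect the main obstacle to be the middle step: verifying cleanly that every generic listing of $\mathcal{R}(B)$ enumeration-computes the \emph{total} function $\chi_B$ (not merely that it computes $B$ from an honest oracle), which is precisely where the partition-of-unity property of the sets $R_n$ is essential, and making sure the resulting object is the graph of a total function so that the quasi-minimality hypothesis applies.
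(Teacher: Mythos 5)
Your argument is correct and follows essentially the same route as the paper's own proof: use the semicharacteristic function of $A$ as the generic description (exploiting density $1$), pull back $\gamma(\chi_B) \le_e A$ through an enumeration operator applied to a generic description of $\mathcal{R}(B)$, invoke quasi-minimality to conclude $B$ is computable, and then derive the contradiction from $A \le_g \mathcal{R}(B)$. The only difference is that you explicitly construct the enumeration operator sending a generic listing of $\mathcal{R}(B)$ to $\gamma(\chi_B)$, whereas the paper simply cites its earlier observation that $B$ is uniformly computable from any generic description of $\mathcal{R}(B)$.
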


\begin{proof} Suppose for a contradiction that $A$ satisfies the above
  hypotheses and $A \equiv_g \mathcal{R}(B)$.  Let $S_A$ be the
  semicharacteristic function of $A$, that is,  $S_A(n) = 1$ if $n \in
  G$ and  $S_A(n)$ is undefined otherwise.  Note that $A \equiv_e
  \gamma(S_A)$, and $S_A$ is a generic description of $A$ since $A$ has
  density $1$.  Since $\mathcal{R}(B) \leq_g A$, by the definition of
  generic reducibility, there is a generic description $\Theta$ of
  $\mathcal{R}(B)$ such that $\gamma(\Theta) \leq_e S_A$.  However, as
  we have noted, $B$ is computable by a fixed oracle machine from any
  generic description of $\mathcal{R}(B)$.  Hence, if $\chi_B$ is
the characteristic function of $B$, we have 
 \[ \gamma(\chi_B) \leq_e \gamma(\Theta) \leq_e S_A \leq_e A. \]
   Therefore, $\gamma(\chi_B) \leq_e A$.  Since the
  enumeration degree of $A$ is quasi-minimal and $\chi_B$ is total, it
  follows that $\chi_B$ and hence $B$ and $\mathcal{R}(B)$ are
  computable.  As $A \leq_g \mathcal{R}(B)$, we conclude that $A$ is
  generically computable, which is the desired contradiction.
\end{proof}

\begin{proof} To prove the theorem  must now construct a set $A$ satisfying 
the hypotheses of the
above lemma.  We  use a modified version of Cooper's elegant
exposition of Medvedev's proof of the existence of
 quasi-minimal e-degrees. (\cite{Cooper}, Theorem 11.4.2).  In order
to ensure that $A$ has density $1$ we meet the following positive
requirements ensuring that $A$ has density $1$:
$$P_n : R_n \subseteq^* A$$

  In order to ensure that $A$
is not generically computable, we satisfy the following requirements:

$$S_n : \quad \Phi_n \mbox{ does not generically compute } A$$ 

 Note that meeting all the requirements $P_n$ and
$S_n$ ensures that $A$ is not c.e.~since any c.e.~set
of density $1$ is generically computable.

Hence, in order to ensure that the e-degree of $A$ is quasi-minimal, it 
suffices to ensure that every total function $f$ with $\gamma(f) \leq_e
A$ is computable.   Our standard listing $\{W_e\}$ gives us a listing
of enumeration operators.  We will meet the following requirements:

$$ U_n : \mbox{~If~} W_n(A) = \gamma(f) \mbox{ where $f$ is a total function,
  then $f$ is computable}$$ 

We identify partial functions with their graphs.  For example, if
$\theta$ and $\mu$ are partial functions, then $\theta \supseteq \mu$
means that the graph of $\theta$ contains the graph of $\mu$.  We say
that $\theta$ and $\mu$ are \emph{compatible} if they agree on the
intersection of their domains, or, equivalently, $\theta \cup \mu$ is
a partial function.  A \emph{string} is a $\{0,1\}$-valued partial
function $\sigma$ whose domain is equal to $\{0,1, \dots, k-1\}$ for
some $k$ called the \emph{length} of $\sigma$.

At each stage $s$ in the construction of $A$, we will have a partial
computable function $\theta_s$ (taking values in $\{0,1\}$) which
represents the part of the characteristic function of $A$ constructed
by the beginning of stage $s$.  We will have $\theta_{s+1} \supseteq
\theta_s$ for all $s$, and the characteristic function of $A$ will be
$\cup_s \theta_s$.  The domain of $\theta_s$ will be a computable set
having at most finitely elements not in $\cup_{i<s} R_i$.  Further,
there will be only finitely many $x$ with $\theta_s(x) = 0$.  Let
$\theta_0$ be the empty partial function.

If $s = 3n$, then define $\theta_{s+1} \supseteq \theta_s$
by setting $\theta_{s+1} (x) = 1$ for all $x \in \cup_{i<s} R_i$
such that $\theta_s(x) \neq 0$.  These steps will ensure that $\cup_s
\theta_s$ is total and each $R_i \subseteq^* A$.

 If $s = 3n + 1$ we diagonalize against $\Phi_n$.  If there exists an
 $x \in R_s \diagdown dom(\sigma_s)$ with $\Phi_n (x)$ defined then
 let $\sigma_{s+1}(x)$ have a value of $0$ or $1$ which is different
 from $\Phi_n(x)$.  If no such $x$ exists let $\sigma_{s+1} =
 \sigma_s$.  This ensures that the requirement $S_n$ is met because
 $R_s \cap dom(\theta_s)$ is finite and $R_s$ has positive density.

 If $\theta$ is a partial function, let $\theta^{-1}(1) = \{ x :
 \theta(x) = 1 \}$.

  If $s = 3n+2$, there are two cases.  

  {\bf Case 1.}  There exists a string $\sigma_s$ compatible with
  $\theta_s$ and numbers $x$, $y_1$, and $y_2$ such that $y_1 \neq y_2$
  and $\langle x, y_1 \rangle, \langle x, y_2 \rangle \in
  W_n(\sigma_s^{-1}(1))$.

  In this case, let $\theta_{s+1} = \theta_s \cup \sigma_s$,
  ensuring that $W_n(A)$ is not a single valued function.

  {\bf Case 2}.  Otherwise. Let $\theta_{s+1} = \theta_s$.  We must
  show that the requirement $U_n$ is met in this case.  Suppose that
  $W_n(A) = \gamma (f)$ where $f$ is a total function.  We must show
  that $f$ is computable.  Note that the set of strings compatible
  with $\theta_s$ is computable for fixed $s$.  Given $x$, to compute
  $f(x)$ effectively, search effectively for a number $y$ and a string
  $\sigma$ which is compatible with $\theta_s$ such that $\langle x, y
  \rangle \in W_n(\sigma^{-1}(1))$.  We claim that such $\sigma, y$
  exist, and the only possible value for $y$ is $f(x)$, which suffices
  to show that $f$ is computable.  First, observe that there is a
  string $\sigma$ compatible with $\theta_s$ with $\langle x, f(x)
  \rangle \in W_n(\sigma^{-1}(1))$ since $\langle x,f(x) \rangle \in
  W_n(A)$ and $A \supseteq \theta_s$.  Thus, the desired $\sigma$ and
  $y$ exist, in fact with $y = f(x)$.  It remains to show that if
  $\langle x, y\rangle \in W_n(\tau^{-1}(1)$ where $\tau$ is a string
  compatible with $\theta_s$, then $y = f(x)$.  Let $\mu$ be a string
  compatible with $\theta_s$ such that $\mu^{-1}(1) \supseteq
  \sigma^{-1}(1) \cup \tau^{-1}(1)$.  (To obtain $\mu$, let $b$ be the
  greater of the length of $\sigma$ and the length of $\tau$, and, for
  $x < b$, set $\mu(x) = \theta_s(x)$ if $x$ is in the domain of
  $\theta_s(x)$, and otherwise let $\mu(x) = 1$.)  Then, by the
  monotonicity of enumeration operators, $\langle x, f(x) \rangle$ and
  $\langle x, y \rangle$ both belong to $W_n(\tau^{-1}(1))$.  Since
  Case 1 does not apply, we conclude that $y = f(x)$, which completes
  the proof.
\end{proof}

\section{Further results and open questions}

The authors, in ongoing joint work with Rod Downey, have obtained
further results in the area and are working on open questions.  The
section is a brief update on this project.  Full results and proofs
will appear in a later paper \cite{DJS}.

One aspect of the project is the study of the connection between
computability theory and asymptotic density.  Recall that it was shown
in Theorem \ref{nosubset} that there is a c.e.~set $A$ of density $1$
which has no computable subset of density $1$.  In that proof, the
positive requirements $R_n \subseteq^* A$ had an infinitary nature,
and this makes one suspect that no such $A$ is low.  (A set $A$ is
called \emph{low} if $A' \leq_T 0'$ or, in other words, every
$A$-c.e.~set is computable from the halting problem.)  Indeed this is
the case, and we also show that every nonlow c.e.~set computes such an $A$.

\begin{thm} \cite{DJS} \label{low}  The following are equivalent for
  any c.e.~degree $\bf a$:
\begin{enumerate}
    \item  The degree $\bf a$ is not low.
     \item There is a c.e.~set $A$ of degree $\bf a$ such that $A$ has
  density $1$ but no computable subset of $A$ has density $1$.
\end{enumerate}
\end{thm}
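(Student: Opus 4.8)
The two implications have rather different flavors, and I would prove the harder one, $(1)\Rightarrow(2)$, by a priority construction and the easier one, $(2)\Rightarrow(1)$, in contrapositive form as a self-contained effective-combinatorics argument. Throughout I use that lowness is a Turing-degree invariant (if $A \equiv_T B$ then $A' \equiv_T B'$), so it is harmless in (2) to pass to a fixed representative of $\mathbf a$.

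For $(2)\Rightarrow(1)$ I would show directly that no c.e.\ set with the property in (2) is low: given a low c.e.\ set $A$ with $\rho(A)=1$, I will produce a \emph{computable} set $D \supseteq \overline A$ with $\rho(D)=0$, whence $B:=\overline D$ is a computable subset of $A$ of density $1$. The key point, and the place where lowness is used, is that for each $k$ the number
\[ \beta_k := \min\{\, m : (\forall n \ge m)\; |\overline{A} \cap [0,n]| \le (n+1)2^{-k} \,\} \]
exists (because $\rho(\overline A)=0$) and is computable from $0'$: the matrix ``$|\overline A \cap [0,n]| \le (n+1)2^{-k}$'' is $A$-decidable, so the condition on $m$ is $\Pi^0_1(A)$ and hence $\beta_k \le_T A' \le_T 0'$ by lowness; without lowness this is only a $\Pi^0_2$ condition, which is exactly why the conclusion can fail for other degrees. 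I would then fix a computable approximation $\beta_k = \lim_s \beta_{k,s}$ and the partial computable functions $\gamma_k(n)=\mu s\,[\,|A_s \cap [0,n]| \ge (n+1)(1-2^{-k})\,]$, each total on $[\beta_k,\infty)$, and build $D$ by a computable recursion in consecutive blocks $[\ell_{i-1},\ell_i)$, the $i$-th block carrying a precision $p_i$ and a nondecreasing deadline $t_i \ge \gamma_{p_i}(\ell_i)$ and contributing to $D$ exactly its elements not yet in $A_{t_i}$. Since $\gamma_{p_i}(\ell_i) \le t_i$ forces at most a $2^{-p_i}$-fraction of $[0,\ell_i]$ to lie outside $A_{t_i}$, and since I will search for each $\ell_i$ so as to satisfy $\ell_i \ge 2\ell_{i-1}$, a short estimate shows $\rho(D)=0$ as soon as $p_i \to \infty$. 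Forcing $p_i \to \infty$ computably is the only delicate point: here I would run a standard finite-injury bookkeeping on the approximations $\beta_{k,s}$ (block $i$ provisionally adopts precision $\max\{k \le i : \beta_{k,i} \le \ell_{i-1}\}$, restarting its search whenever a relevant $\beta_{k,s}$ moves), using that each fact ``$\beta_k$ has settled'' is finitary, so that every precision level is eventually achieved on all but finitely many blocks.

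For $(1)\Rightarrow(2)$ I would fix a c.e.\ set $C$ of the given non-low degree $\mathbf a$ and build a c.e.\ set $A \equiv_T C$ of density $1$ with no computable subset of density $1$. The skeleton is as in Theorems \ref{nosubset} and \ref{d1}: positive requirements $P_e : R_e \subseteq^* A$ force $\rho(A)=1$ (by restricted countable additivity, since each $\overline A \cap R_e$ is then finite), and negative requirements diagonalize against the $e$-th candidate computable density-$1$ subset by, whenever $\Phi_e$ threatens to be a total $\{0,1\}$-valued function all of whose $1$-values lie in $A$, permanently withholding from $A$ a single witness $x \in R_e$ with $\Phi_e(x)\downarrow = 1$ (if $\Phi_e$ never outputs $1$ on $R_e$ then $\Phi_e^{-1}(1)$ omits the positive-density set $R_e$ and cannot have density $1$, so the requirement is met for free). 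On top of this I would code $C$ into $A$ to secure $C \le_T A$ and keep the whole construction computable in $C$ to secure $A \le_T C$. The reason a hypothesis on $\mathbf a$ is unavoidable is visible already in the negative requirements: to act safely one must \emph{wait} for $\Phi_e$ to converge on an initial stretch of $R_e$ before releasing those elements to the positive requirement, and a low degree cannot supply, computably in itself, a correct schedule for these waits (this is exactly the content of the other direction); the non-lowness of $C$ is what provides that scheduling. I expect the main obstacle to be organizing the coding of $C$ so that it survives the injuries inflicted by the infinitary requirements $P_e$ — that is, arranging a priority tree (or an equivalent scheme threaded through the $R_e$'s) on which the coding locations are preserved.
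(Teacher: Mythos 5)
Note first that the paper does not actually contain a proof of this theorem: it is stated as a result from the then-forthcoming paper \cite{DJS}, so there is no in-text argument to compare your attempt against. I will therefore evaluate the attempt on its own merits.

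Your $(2)\Rightarrow(1)$ direction is built on the right mechanism: the thresholds $\beta_k$ are given by a $\Pi^0_1(A)$ condition, so lowness of $A$ makes them uniformly $\Delta^0_2$ and hence approximable, and this is indeed where lowness enters. However, the density estimate as stated is wrong. You control only $|[0,\ell_i]\setminus A_{t_i}|\le(\ell_i+1)2^{-p_i}$, i.e.\ $\rho_n(D)$ at block endpoints $n=\ell_i$. Inside the block, for $n$ just past $\ell_{i-1}$, the elements of $[0,\ell_i]\setminus A_{t_i}$ could all be packed into $[\ell_{i-1},n]$, giving $|D\cap[0,n]|$ of order $(\ell_i+1)2^{-p_i}$ while $n+1$ is of order $\ell_{i-1}$. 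You impose $\ell_i\ge 2\ell_{i-1}$ but no upper bound, and in fact $\ell_i$ must also exceed $\beta_{p_i}$, so $\ell_i/\ell_{i-1}$ is unbounded and your bound can fail catastrophically — the upper density of $D$ can approach $1$. The fix is not cosmetic: you must either enforce the stronger block condition $|[0,m]\setminus A_{t_i}|\le(m+1)2^{-p_i}$ for all $m\in[\ell_{i-1},\ell_i]$ (for which $\ell_{i-1}\ge\beta_{p_i}$ is essential, so the $\beta_k$-bookkeeping has to be threaded into the choice of $\ell_{i-1}$, not deferred to a separate "$p_i\to\infty$" stage), or cap $\ell_i/\ell_{i-1}$, which again forces you to synchronize the precision schedule with the (still only approximated) $\beta_k$'s.

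The $(1)\Rightarrow(2)$ direction is only a frame, and the one place where the hypothesis must do work is exactly the place you leave blank. You say "the non-lowness of $C$ is what provides that scheduling," justified by "this is exactly the content of the other direction," which is circular. As described, your construction — infinitary $P_e$'s, a one-witness diagonalization per $N_e$, coding $C$ in, keeping everything $C$-computable — is insensitive to the degree $\mathbf a$; if it succeeded it would place such an $A$ in every nonzero c.e.\ degree, contradicting $(2)\Rightarrow(1)$. What is missing is a concrete combinatorial resource extracted from $\mathbf a'>_T 0'$ (for example, an $\mathbf a$-controlled sequence of restraint/permitting decisions that no $0'$-computable approximation can track), together with the marker or tree machinery to make $A$ have exactly degree $\mathbf a$ while the infinitary $R_e\subseteq^*A$ requirements keep injuring the coding. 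You correctly flag the latter as "the main obstacle," but that plus the unexplained role of non-lowness means this direction is not yet a proof.
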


Another line of results related to Theorem \ref{nosubset} involves
weakening the requirement that the subsets have density $1$.  The
following result is easy.

\begin{thm} \cite{DJS} If $A$ is a c.e.~set of upper density at least
  $r$, where $r$ is a computable real, then $A$ has a computable
  subset of upper density at least $r$.  In particular, every c.e.~set
  of upper density $1$ has a computable subset of upper density $1$.
\end{thm}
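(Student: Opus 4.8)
The plan is to construct $B$ in a sequence of rounds indexed by $k=0,1,2,\dots$, committing the value of the characteristic function of $B$ on a fresh block of integers in each round, so that $B$ is computable by construction. Since $r$ is a computable real, I would fix a computable sequence of rationals $\{p_k\}$ with $p_k<r$ for every $k$ and $\lim_k p_k=r$; for instance, if $f : \mathbb{N} \to \mathbb{Q}$ is computable with $|r-f(n)|\le 2^{-n}$, one may take $p_k=f(k+1)-2^{-k}$. I would also fix a uniformly computable enumeration of $A$ and let $A_s$ denote the finite set of elements enumerated by stage $s$, so that $A_s\subseteq A_{s+1}$ and $\bigcup_s A_s=A$. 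Put $m_{-1}=-1$.

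Round $k$ begins once $B$ has been defined on $[0,m_{k-1}]$ (vacuously for $k=0$). In round $k$ I would dovetail through all pairs $(s,m)$ with $m>m_{k-1}$, searching for one satisfying
\[ \frac{|A_s\cap (m_{k-1},m]|}{m+1} > p_k . \]
When the first such pair is found in a fixed canonical order, set $m_k=m$ and, for each $x\in(m_{k-1},m_k]$, put $x$ into $B$ if and only if $x\in A_s$; then pass to round $k+1$.

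Three points would complete the proof. First, the search in round $k$ always halts: since $\overline{\rho}(A)\ge r > p_k$ and $c:=|A\cap[0,m_{k-1}]|$ is a constant,
\[ \limsup_{m\to\infty}\frac{|A\cap(m_{k-1},m]|}{m+1} = \limsup_{m\to\infty}\Bigl(\rho_m(A)-\tfrac{c}{m+1}\Bigr) = \overline{\rho}(A) \ge r > p_k, \]
so some $m>m_{k-1}$ has $\frac{|A\cap(m_{k-1},m]|}{m+1}>p_k$, and for all sufficiently large $s$ we have $A_s\cap[0,m]=A\cap[0,m]$, so $(s,m)$ witnesses termination. Hence the construction defines $B$ on all of $\omega$, and $B$ is computable, since to decide whether $x\in B$ one runs the construction until the round $k$ with $m_k\ge x$ is completed. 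Second, $B\subseteq A$, because every element put into $B$ in round $k$ lies in the finite set $A_s\subseteq A$. Third, with $s$ the stage selected in round $k$,
\[ \rho_{m_k}(B) \ge \frac{|B\cap(m_{k-1},m_k]|}{m_k+1} = \frac{|A_s\cap(m_{k-1},m_k]|}{m_k+1} > p_k , \]
and since $\{m_k\}$ is an increasing subsequence of $\omega$, it follows that $\overline{\rho}(B)=\limsup_n\rho_n(B)\ge\limsup_k\rho_{m_k}(B)\ge\limsup_k p_k=r$. This proves the first assertion, and the second follows by taking $r=1$.

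I do not expect a deep obstacle here: the construction is injury-free because distinct rounds never interact. The one point that needs a moment's care is why it suffices to commit $B$ only on the fresh block $(m_{k-1},m_k]$ in round $k$ rather than on all of $[0,m_k]$ — this is fine precisely because upper density is a $\limsup$, so making $B$ dense on infinitely many initial segments is enough, and because the density of $A$ restricted to a tail $(c,m]$ has the same $\limsup$ in $m$ as $\rho_m(A)$ does.
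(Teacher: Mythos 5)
Your argument is correct. The paper states this theorem without proof (deferring to the reference \cite{DJS}, which is listed as in preparation), so there is no in-paper proof to compare against; but the construction you give is the natural one and all three verifications go through: the tail-limsup $\limsup_m |A\cap(m_{k-1},m]|/(m+1)$ equals $\overline{\rho}(A)$ because the subtracted term $|A\cap[0,m_{k-1}]|/(m+1)$ vanishes, so each round's dovetailed search terminates; $B$ is computable because every round halts and $B$ is committed on the block $(m_{k-1},m_k]$ once and for all; and $\rho_{m_k}(B)>p_k$ with $p_k\to r$ gives $\overline{\rho}(B)\ge r$ via the subsequence inequality $\limsup_n\rho_n(B)\ge\limsup_k\rho_{m_k}(B)$. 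The use of the computability of $r$ is exactly where it is needed, namely to obtain a computable sequence of rational lower approximations $p_k\nearrow r$ that the finite search can test against.
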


We know by Theorem \ref{nosubset} that this result fails for lower
density even in the case $r=1$, but we show that a slightly weaker
version holds for lower density.

\begin{thm} \cite{DJS} If $A$ is a c.e.~set and $r$ is a real number,
  and the lower density of $A$ is at least $r$, then for each
  $\epsilon > 0$ $A$ has a computable subset whose lower density at
  least $r - \epsilon$.  In particular, every c.e.~set of density $1$
  has computable subsets of lower density arbitrarily close to $1$.
\end{thm}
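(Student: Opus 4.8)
The plan is to give a uniform construction: from a computable enumeration $\{A_s\}$ of $A$ and $\epsilon>0$ I will build a computable $B\subseteq A$ with $\underline{\rho}(B)\ge r-\epsilon$. It is convenient to fix rationals $q,\eta$ with $r-\epsilon<q<r-\eta<r$ and to aim for $\underline{\rho}(B)\ge q$; the only fact about the hypothesis that I use is that, for some (noncomputable) threshold $N$, $|A\cap[0,n]|\ge(r-\eta)(n+1)$ for all $n\ge N$.

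I would build $B$ in stages using a single movable marker $m$: at each stage $B$ is permanently decided on an initial segment $[0,m)$, and when a new block $[m,m')$ is decided we put $B$ equal to the \emph{then-current} approximation $A_s$ restricted to that block, so automatically $B\subseteq A$. When the marker sits at $m$, I search, dovetailing over the right endpoint $m'>m$ and the stage $s$, for a pair $(m',s)$ such that
\[
|B\cap[0,m)|+|A_s\cap[m,k]|\ \ge\ q(k+1)-g(k)\qquad\text{for all }k\text{ with }m\le k<m',
\]
where $g$ is a fixed computable function with $g(k)\to\infty$ and $g(k)=o(k)$ (its only role is to let the construction get started on the first few blocks); in addition the search carries an extra clause, described below, forcing $s$ to be large enough. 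When such $(m',s)$ is found we commit $B\cap[m,m'):=A_s\cap[m,m')$ and advance the marker to $m'$. The displayed inequality is essentially ``$\rho_k(B)\ge q$ for all $k$ in the new block,'' so if the marker tends to infinity then $\rho_k(B)\ge q-o(1)$ and hence $\underline{\rho}(B)\ge q>r-\epsilon$; and $B$ is computable precisely because the marker tends to infinity. So everything reduces to showing the search always eventually succeeds, i.e.\ that the marker is never permanently stuck.

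The previous block guarantees $|B\cap[0,m)|\ge qm-g(m)$, so at the left end of the new block there is a ``slack'' roughly equal to $|B\cap[0,m)|-qm$. As $k$ increases the left side of the inequality can lose ground only across a stretch on which $A$ is empty, and a one-line density estimate bounds the length of an $A$-empty stretch beginning at $m$ by about $\frac{1}{r-\eta}|A\cap[0,m)|-m$. Comparing, the slack suffices to carry the inequality across such a stretch provided the accumulated \emph{deficiency} $D=|A\cap[0,m)|-|B\cap[0,m)|$ --- the elements of $A$ below $m$ enumerated only after their block was frozen --- stays below $\frac{(r-\eta)-q}{r-\eta}\,|A\cap[0,m)|$. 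Maintaining this deficiency bound inductively is the step I expect to be the real obstacle, and it is the purpose of the ``$s$ large enough'' clause: a block must not be frozen until the enumeration within it has advanced far enough that the deficiency it contributes is a small enough fraction of the amount of $A$ that will eventually fall in that block. The catch is that this eventual amount is not known at freezing time; the way out is to let successive blocks grow and to play the deficiency budget against the slack budget so that the bound propagates from one marker to the next. Granting this, termination of the search follows from the $\liminf$ hypothesis (take $s$ with $A_s=A$ on $[m,m')$ and $m'$ chosen past both $N$ and the worst-case $A$-empty stretch starting at $m$), which completes the proof; the final sentence of the theorem is the special case $r=1$.
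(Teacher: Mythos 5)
This theorem is stated in the paper only as an announcement of a result from the forthcoming paper \cite{DJS}; no proof appears in the present source, so there is nothing to compare your argument against directly, and I can only assess it on its own terms.

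Your framework (a movable marker, freezing $B$ block by block as a snapshot of the current enumeration $A_s$, and searching for the next block endpoint so that the running density condition $|B\cap[0,k]|\ge q(k+1)-g(k)$ persists) is reasonable, and your ``one-line'' estimate bounding an $A$-empty stretch starting at $m$ by roughly $\frac{1}{r-\eta}|A\cap[0,m)|-m$, together with the resulting deficiency budget $D\le \frac{(r-\eta)-q}{r-\eta}\,|A\cap[0,m)|$, are both correct as stated. But you explicitly leave the crucial step unproved: you write that maintaining the deficiency budget inductively ``is the step I expect to be the real obstacle'' and then say ``Granting this, termination of the search follows.'' That is the whole theorem; the rest is bookkeeping.

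The gap is genuine and not obviously fixable by the move you gesture at (``let successive blocks grow and play the deficiency budget against the slack budget''). Here is the concrete problem. Write $q'=r-\eta$. When block $j=[m_j,m_{j+1})$ is frozen at stage $s_j$, the deficiency it contributes is $d_j=|A\cap[m_j,m_{j+1})|-|A_{s_j}\cap[m_j,m_{j+1})|$, and the only bound available from verifying $\rho_k(A_{s_j})\ge q'$ on $[m_j,m_{j+1})$ is of the form $d_j\le (1-q')\,m_{j+1}$ (the cruder trivial bound $|A\cap[m_j,m_{j+1})|\le m_{j+1}-m_j$ gives nothing better). Even with blocks growing super-geometrically, summing gives $D\lesssim(1-q')\,m_i$, whereas the budget you derived is $D\lesssim(q'-q)\,m_i$. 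Since $q<q'<r$, one has $1-q'>1-r\ge 0$ while $q'-q<\epsilon$; hence $(1-q')\le(q'-q)$ forces $q'\ge\frac{1+q}{2}$, i.e.\ $r$ close to $1$. For arbitrary $r$ and small $\epsilon$ the inequality fails, so the naive deficiency-propagation argument does not close. Either the search condition must be chosen more cleverly (e.g.\ recomputing the deficiency against the \emph{current} approximation $A_{s}$ at each stage, or using a lookahead scheme), or a genuinely different construction is needed. As written, the proof does not establish the theorem; you have identified the right obstruction but not shown how to get past it, and the quantitative check above shows that the first thing one would try does not work.
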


In Theorem \ref{d02} we showed that the densities of computable sets
are precisely the $\Delta^0_2$ reals in $[0,1]$.  In \cite{DJS} we
 consider analogous results for upper and lower densities, and for
c.e.~sets.  Call a real number $r$ \emph{left}-$\Pi^0_n$ if $\{q \in
\mathbb{Q} : q < r\}$ is $\Pi^0_n$, i.e. the lower cut of $r$ in the
rationals is $\Pi^0_n$.  An analogous definition holds  for other levels
of the arithmetic hierarchy.

\begin{thm} \cite{DJS}.  Let $r$ be a real number in the interval $[0,1]$.
   \begin{enumerate}
        \item $r$ is the lower density of a computable set if and only if
         $r$ is left $\Sigma^0_2$
         \item $r$ is the upper density of a computable set if and only if
         $r$ is left $\Pi^0_2$
       \item $r$ is the density of a c.e.~set if and only if $r$ is
         left $\Pi^0_2$
       \item $r$ is the lower density of a c.e.~set if and only if $r$
         is left $\Sigma^0_3$
       \item $r$ is the upper density of a c.e.~set if and only if $r$
         is left $\Pi^0_2$
     \end{enumerate}
\end{thm}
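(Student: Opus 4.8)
The plan is to prove each biconditional as two inclusions, after first disposing of the trivial case that $r$ is rational: every rational in $[0,1]$ is the density of a computable set, and its lower cut $\{q\in\mathbb Q:q<r\}$ is computable, hence $\Sigma^0_1$, so it lies in every class named. From now on assume $r$ is irrational, so that $\{q\in\mathbb Q:q<r\}=\{q\in\mathbb Q:q\le r\}$; this removes the usual ambiguity at the cut point in the computations below.

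The forward inclusion of all five parts is a matter of counting quantifiers. If $A$ is computable then the rationals $\rho_n(A)=|A\cap[0,n]|/(n+1)$ form a uniformly computable sequence; if $A$ is c.e.\ with enumeration $\{A_s\}$ then $\rho_n(A)=\sup_s\rho_n(A_s)$, so $\rho_n(A)$ is uniformly left-c.e. Since $\liminf_n x_n=\sup_m\inf_{n\ge m}x_n$ has lower cut of the shape $\exists m\,\forall n\ge m\,(\cdots)$ and $\limsup_n x_n=\inf_m\sup_{n\ge m}x_n$ has lower cut of the shape $\forall m\,\exists n\ge m\,(\cdots)$, one reads off directly: for computable $A$, $\underline\rho(A)$ is left-$\Sigma^0_2$ and $\overline\rho(A)$ is left-$\Pi^0_2$; for c.e.\ $A$, each of $\overline\rho(A)$ and $\rho(A)$ is left-$\Pi^0_2$ (recall $\rho=\limsup$ when the limit exists, and the extra ``$\exists s$'' is absorbed into the inner existential quantifier), while $\underline\rho(A)$ is left-$\Sigma^0_3$, since here the ``$\exists s$'' genuinely raises $\Sigma^0_2$ to $\Sigma^0_3$.

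The reverse inclusion for computable sets, parts (1) and (2), is an interval construction extending the proof of Theorem \ref{d02}. Using the standard normal forms --- a real is left-$\Sigma^0_2$ exactly when it equals $\liminf_n q_n$, and left-$\Pi^0_2$ exactly when it equals $\limsup_n q_n$, for some computable sequence of rationals $q_n\in[0,1]$ --- one builds $A$ on consecutive intervals, alternately \emph{filling} an interval (all its points enter $A$, which pushes $\rho_n(A)$ up toward $1$) and \emph{diluting} one (no point enters $A$, which pushes $\rho_n(A)$ down toward $0$), with interval lengths chosen so that the successive local minima of $\rho_n(A)$ track $q_n$ while the local maxima tend to $1$ (for (1)), or the local maxima track $q_n$ while the local minima tend to $0$ (for (2)); as $\rho_n(A)$ is monotone on each interval, $\liminf_n\rho_n(A)=\liminf_n q_n$ in the first case and $\limsup_n\rho_n(A)=\limsup_n q_n$ in the second. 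Part (5) then needs no new work: a computable set is c.e., so the set produced for part (2) already witnesses that every left-$\Pi^0_2$ real is the upper density of a c.e.\ set, and the forward inclusion of (5) gives the converse.

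The heart of the theorem, and where I expect the real difficulty, is the reverse inclusion for c.e.\ sets in parts (3) and (4), since there computable sets no longer suffice --- by Theorem \ref{d02} they realize only $\Delta^0_2$ densities. For (3), write $r=\inf_m\gamma_m$ with $(\gamma_m)$ nonincreasing and uniformly left-c.e., say with computable approximations $\gamma_{m,s}\uparrow_s\gamma_m$. I would run an interval construction in which interval $I_m$ is to carry density $\gamma_m$: on passing from $I_{m-1}$ to $I_m$ we dilute down to the current value $\gamma_{m,s}$ (legitimate since $\gamma_m\le\gamma_{m-1}$, and since every $\gamma_m\ge r$ the density stays above $r$, so once all requirements are met the limit $\rho(A)=r$ will exist), and whenever $\gamma_{m,s}$ later increases we use c.e.-ness to enumerate more points into $I_m$, raising its density to the new value. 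Meeting all requirements forces $\rho_n(A)\to r$ because $\gamma_m\downarrow r$ and the relative error can be driven to $0$. The obstacle --- the reason a naive version fails --- is that enumerating extra points into an early interval $I_m$ raises $\rho_n(A)$ on every later interval, forcing a cascade of re-dilutions of $I_{m+1},I_{m+2},\dots$; this is a priority problem, and the natural device is the one used throughout the paper, namely to pursue the $\gamma_m$-requirement inside $R_m$ (and/or to let the intervals grow fast enough that each disturbance is proportionally negligible and each boundary moves only finitely often after a threshold is fixed), so that each $I_m$ stabilizes and the limit is computed correctly via restricted countable additivity (Lemma \ref{rca}). For (4), write $r=\sup_m\eta_m$ with $(\eta_m)$ nondecreasing and uniformly left-$\Pi^0_2$ (a standard normal form for left-$\Sigma^0_3$); by part (3), applied uniformly, fix c.e.\ sets $B_m$ with $\rho(B_m)=\eta_m$. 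Now build $A$ so that $\rho_n(A)$ recovers toward $1$ on one sequence of intervals while, on the intervening intervals, it \emph{dips} --- over a very long stretch built as a lengthening prefix of $B_m$, with enumeration used to hold the running density from falling below the current approximation to $\eta_m$ --- to depth $\approx\eta_m$; since the $\eta_m$ are nondecreasing with supremum $r$, this gives $\liminf_n\rho_n(A)=\lim_m\eta_m=r$. As in (3), I expect the delicate part to be the injury bookkeeping --- here, allocating positions for the dips so they can be revised, keeping the $m$-th dip from overshooting below $\eta_m$, and verifying that the liminf is exactly $r$ --- rather than the underlying idea.
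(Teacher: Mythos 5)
This theorem appears in the paper's closing section (``Further results and open questions'') and is attributed to \cite{DJS}, the joint paper with Downey described as ``in preparation.'' The present paper does not give a proof of this statement, only announces it, so there is no proof here to compare your attempt against.

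On its own merits: your forward directions by quantifier counting are correct, and your reverse directions for parts (1), (2), and (5) via alternating fill/dilute intervals, together with the $\liminf$/$\limsup$ normal forms for left-$\Sigma^0_2$/left-$\Pi^0_2$ reals, are the natural extension of the construction in Theorem \ref{d02} and are sound modulo bookkeeping. For part (3), of the two fixes you float, the second one (make the intervals $I_m$ grow fast enough that each disturbance is proportionally negligible) is the right one and already suffices on its own: if $\sum_{i<m}|I_i|$ is $o(|I_{m'}|)$ for $m'>m$, then late enumeration into $I_m$ perturbs $\rho_n(A)$ at positions in $I_{m'}$ by an amount tending to $0$, so --- keeping the enumerated points evenly spread within each $I_m$ as they arrive --- the density converges to $\lim_m\gamma_m=r$ with no $R_m$-style priority argument needed; you should carry that estimate through rather than gesture at infinite injury. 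Part (4) is the genuinely hard case, and you correctly flag that your sketch leaves the central estimates unproved: keeping the $m$-th dip from overshooting below $\eta_m$ as $\eta_{m,t}$ moves, and verifying that the liminf equals exactly $\sup_m\eta_m$ rather than something strictly smaller. As written, that part of the proposal has a real gap. Since \cite{DJS} has not appeared, I cannot tell you whether your decomposition matches the eventual published argument.
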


The other main topic of our ongoing project with Downey is the structure of the
generic degrees and the generic classes.  However, here we are have
not yet been able to answer some questions which would seem to be
basic.

\begin{question} Do there exist noncomputable sets $A,B$ whose generic
  classes form a minimal pair in the sense that every set generically
  computable from both $A$ and $B$ is generically computable?  (That
  is, $\widehat{G}(A) \cap \widehat{G}(B) = \widehat{G}(\emptyset)$.)
\end{question}

So far, our results on the above question have a negative character.
A set $A$ is \emph{hyperimmune} if $A$ is infinite and for every
computable sequence $\{F_i\}_{i \in \omega}$ of pairwise disjoint
finite sets, $A \cap F_i = \emptyset$ for some index $i$.

\begin{thm} \cite{DJS} Let $A$ and $B$ be sets such that $A \cup B$ is
  hyperimmune.  Then $A$ and $B$ do not form a minimal pair in the
  sense of the above question.
\end{thm}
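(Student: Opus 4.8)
The plan is to produce a single set $C$ with $C\in\widehat{G}(A)$, $C\in\widehat{G}(B)$, and $C\notin\widehat{G}(\emptyset)$, which by definition shows that $A$ and $B$ do not form a minimal pair. We may assume $A$ and $B$ are both noncomputable, since the question concerns only noncomputable sets; hence both are infinite, and since every infinite subset of a hyperimmune set is hyperimmune (given a computable strong array, some member misses $A\cup B$, hence misses $A$, and likewise for $B$), both $A$ and $B$ are hyperimmune. Let $p_S$ denote the principal function of an infinite set $S$. Then $p_{A\cup B}\le_T A\cup B$ is not dominated by any computable function, while $p_{A\cup B}(n)\le p_A(n)$ and $p_{A\cup B}(n)\le p_B(n)$ for all $n$, with $p_A\le_T A$ and $p_B\le_T B$; and a strong-array argument shows that even the ``thinned'' function $i\mapsto p_{A\cup B}(\langle e,i\rangle)$ is not computably dominated, for each fixed $e$. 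I will arrange that $C$ has density $1$ and has no c.e.~subset of density $1$ — then $C\notin\widehat{G}(\emptyset)$, since by Proposition~\ref{approx} a partial computable generic description of $C$ would, as $\overline{C}$ is negligible, yield such a subset — while $C$ is simultaneously $A$-c.e.~and $B$-c.e., so that $C$ is its own $A$-c.e.~(resp.~$B$-c.e.) subset of density $1$, hence lies in $\widehat{G}(A)\cap\widehat{G}(B)$ via the description that enumerates $C$ from the relevant oracle and always answers ``yes''.

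To build $C$, reuse the partition $\{R_e\}$ and meet inside $R_e$ the requirement $N_e$: ``$W_e$ is not a subset of $C$ of density $1$''. Fix a computable sequence $c^e_0<c^e_1<\cdots$ with $c^e_{i+1}\ge 2c^e_i$ and set $Y^e_i=R_e\cap[c^e_i,c^e_{i+1})$, so each $Y^e_i$ makes up at least half of $R_e\lceil c^e_{i+1}$. Process $R_e$ in phases: during phase $i$, withhold $Y^e_i$ from $C$ while enumerating into $C$ all of $R_e$ below $c^e_i$; at the ``timer'' stage $\tau^e_i:=p_{A\cup B}(\langle e,i\rangle)$, release $Y^e_i$ into $C$ and pass to phase $i+1$ — unless, at some earlier stage, $W_e$ enumerates an element of $Y^e_i$, in which case \emph{freeze} $R_e$: reserve the first such element permanently outside $C$ and dump all remaining elements of $R_e$ into $C$. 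Thus $\overline{C}\cap R_e$ is a single point if $R_e$ is frozen and empty otherwise, so $\overline{C}$ has density $0$ and $C$ has density $1$, exactly as in the positive part of Theorem~\ref{nosubset}. (The construction is computable from $A\cup B$, since it needs the values $\tau^e_i$; so $C\le_T A\cup B$.)

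For $N_e$, suppose $W_e\subseteq C$ and $\rho(W_e)=1$. Since $R_e$ has positive density, $W_e$ is of density $1$ in $R_e$, and the doubling condition on the $c^e_i$ gives $|W_e\cap Y^e_i|\ge\tfrac14|Y^e_i|>0$ for all large $i$; hence $W_e$ meets $Y^e_i$ first at some finite stage $u_i$, and $i\mapsto u_i$ is a total computable function. Were $R_e$ never frozen, we would have $u_i\ge\tau^e_i=p_{A\cup B}(\langle e,i\rangle)$ for all large $i$, so the computable function $u$ would dominate $i\mapsto p_{A\cup B}(\langle e,i\rangle)$, contradicting the hyperimmunity of $A\cup B$. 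Hence $R_e$ is frozen and its reserved point lies in $W_e\setminus C$, contradicting $W_e\subseteq C$. Therefore $C$ has no c.e.~subset of density $1$, so $C\notin\widehat{G}(\emptyset)$.

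There remains the crux: $C\in\widehat{G}(A)$, the case of $B$ being symmetric. The intended mechanism is that an $A$-oracle, unable to compute the timers $\tau^e_i$ but knowing the overestimates $\sigma^e_i:=p_A(\langle e,i\rangle)\ge\tau^e_i$, runs the construction with $\sigma$ in place of $\tau$; this gives an $A$-computable approximation together with, for each block $Y^e_i$, an $A$-decidable ``confirmed'' predicate (an element of $Y^e_i$ is confirmed once the $A$-run has advanced past block $i$ of $R_e$ without freezing there), and one wants the confirmed set to be a density-$1$ subset of $C$, whence a generic description of $C$ computable from $A$. \textbf{Making this work is the main obstacle.} The $A$-run may freeze an $R_e$ at an \emph{earlier} phase than the true construction — exactly when $W_e$ enters some $Y^e_i$ during the window $[\tau^e_i,\sigma^e_i)$, i.e.~where $B$ pulls the true timer below $p_A$ — and then the $A$-oracle becomes unsure about the remainder of $R_e$, which threatens to cost the whole positive-density contribution of $R_e$. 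The plan is to defeat this by choosing the block lengths $c^e_i$ and the way $p_{A\cup B}$ is read off as a timer (for instance by inserting long computable ``buffer'' stretches so that each $R_e$ can account for at most density-$0$ much unconfirmed mass, uniformly for the oracle $A$ and for the oracle $B$), and by showing the unconfirmed elements form a set enumerable from that oracle, so that they can be excised to leave a legitimate density-$1$ domain on which the description is correct. Getting these bookkeeping choices to serve $A$ and $B$ symmetrically — the point being that $p_{A\cup B}$ lies below both $p_A$ and $p_B$ while itself being not computably dominated — is the delicate heart of the proof.
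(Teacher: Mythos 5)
Your plan is right and the negative requirement argument is correct, but you explicitly flag and leave unresolved the crux, showing $C\in\widehat{G}(A)$, and the repair you sketch will not work. Confirming only those blocks $Y^e_i$ past which the $A$-run advances without freezing means that whenever the $A$-run freezes $R_e$ at phase $j$, the whole tail $R_e\cap[c^e_j,\infty)$ --- which has the full density $2^{-(e+1)}$ of $R_e$ --- is unconfirmed; nothing stops the $A$-run from freezing inside every $R_e$ (e.g.\ when $W_e$ is large), so the unconfirmed mass need not be negligible. Inserting long computable ``buffers'' between blocks only makes things worse: if the $Y^e_i$ are pushed to density $0$ inside $R_e$, a density-$1$ c.e.\ set $W_e$ can avoid all of them, and your negative-requirement argument collapses.

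The fix is to confirm at the level of points rather than blocks, after which your approach does go through. Wait until stage $\sigma^e_i=p_A(\langle e,i\rangle)\ge\tau^e_i$ and examine $W_{e,\sigma^e_i}\cap Y^e_i$. If it is empty, the true construction cannot have frozen at phase $i$ (the enumeration time $u_i$ exceeds $\sigma^e_i\ge\tau^e_i$), so confirm all of $Y^e_i$. If it is nonempty, let $y$ be the earliest-enumerated element; since $\tau^e_i\le\sigma^e_i$, the only element of $Y^e_i$ that could possibly be the true reserved point is $y$, so confirm $Y^e_i\setminus\{y\}$. The confirmed set $D_A$ is $A$-computable, is contained in $C$, and omits at most one point from each $Y^e_i$; with $c^e_{i+1}\ge 2c^e_i$ the omitted points have density $0$ in each $R_e$, hence density $0$ in $\omega$ by restricted countable additivity (Lemma~\ref{rca}), so $D_A$ is a density-$1$ $A$-computable subset of $C$, giving $C\in\widehat{G}(A)$; symmetrically for $B$.
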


This result shows that minimal pairs for relative generic
computability (if they exist at all) are far rarer than for Turing
reducibility.

\begin{cor} \cite{DJS} The set of pairs $(A,B)$ such that $\widehat{G}(A)
 \cap  \widehat{G}(B) = \widehat{G}(\emptyset)$ is meager and of measure
  $0$ in $2^\omega \times 2^\omega$.
\end{cor}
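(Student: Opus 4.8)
The plan is to deduce both halves from the preceding theorem. Since, as observed above, $\widehat{G}(A)=\widehat{G}(B)$ whenever $A\equiv_T B$, the condition $\widehat{G}(A)\cap\widehat{G}(B)=\widehat{G}(\emptyset)$ depends only on the Turing degrees of $A$ and $B$; hence, by the theorem, \emph{if} there exist sets $A'\equiv_T A$ and $B'\equiv_T B$ with $A'\cup B'$ hyperimmune, then $(A,B)$ is not a minimal pair. (Any pair in which $A$ or $B$ is computable is trivially in the set of the statement, but the set of such pairs is meager and null, so these may be ignored.) Thus it suffices to produce, for comeager-many and for almost every pair $(A,B)$, such representatives $A'$, $B'$.

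\emph{Category.} Here we may simply take $A'=A$ and $B'=B$ and check that $\mathcal H:=\{(A,B):A\cup B\text{ is hyperimmune}\}$ is comeager in $2^\omega\times 2^\omega$. The set of pairs with $A\cup B$ infinite is a dense $G_\delta$. For each computable sequence $\{F_i\}$ of pairwise disjoint finite sets, the set $\{(A,B):(\exists i)\ (A\cup B)\cap F_i=\emptyset\}$ is open, and it is dense: given finite conditions on $A$ and $B$, only finitely many $F_i$ can meet the finite region they constrain (the $F_i$ being pairwise disjoint), so imposing $0$'s on one of the remaining nonempty $F_i$ extends the conditions into the set (and if only finitely many $F_i$ are nonempty the set is all of $2^\omega\times 2^\omega$). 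Intersecting over the countably many computable pairwise‑disjoint arrays, together with the infiniteness condition, yields exactly $\mathcal H$, so $\mathcal H$ is comeager; by the theorem the minimal pairs lie in its meager complement.

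\emph{Measure.} One cannot take $A'=A$, $B'=B$ here: for the array $\{F_i\}$ in which $F_i$ is an interval of length $i+1$, the set of pairs with $(A\cup B)\cap F_i\neq\emptyset$ for all $i$ has measure $\prod_{i\ge 0}(1-4^{-(i+1)})>0$, so $\mathcal H$ is not conull and the recoding freedom must be used. For almost every pair both $A$ and $B$ are of hyperimmune degree, the hyperimmune‑free degrees forming a null class, so one may fix increasing functions $h_A\le_T A$ and $h_B\le_T B$, neither dominated by a computable function, recode $A$ onto a thin set of positions whose gaps are controlled by $h_A$ (inserting markers so that $A'\equiv_T A$), recode $B$ likewise onto a disjoint thin set so that $B'\equiv_T B$, and then try to show $A'\cup B'$ is hyperimmune. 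The main obstacle is precisely this last step: $A'$ and $B'$ are made hyperimmune on \emph{unaligned scales} — those of $A'$ governed by $h_A$, those of $B'$ by $h_B$ — and since almost every pair is already a Turing minimal pair there is no non‑dominated function computable from both $A$ and $B$ with which to synchronize them, so the union of two separately recoded sets need not be hyperimmune. Overcoming this requires either a more delicate simultaneous recoding of the pair, or a strengthening of the theorem whose hypothesis — for instance that $A\cup B$ be of hyperimmune degree, which holds for comeager‑many and for almost every pair — already suffices; this is the substance of the measure half and is carried out in \cite{DJS}.
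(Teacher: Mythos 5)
The paper gives no proof of this corollary (or of the preceding theorem): both are cited from the forthcoming \cite{DJS}, so there is no in-text argument to compare against. Judged on its own merits, your category half is correct and complete as a deduction from the cited theorem. Writing $\mathcal{H}=\{(A,B): A\cup B \text{ hyperimmune}\}$, you correctly exhibit $\mathcal{H}$ as the intersection, over the countably many computable pairwise-disjoint arrays $\{F_i\}$, of the sets $\{(A,B):(\exists i)\,(A\cup B)\cap F_i=\emptyset\}$, together with the condition that $A\cup B$ be infinite; each such set is dense open (or, in the trivial case where only finitely many $F_i$ are nonempty, all of $2^\omega\times 2^\omega$), so $\mathcal{H}$ is comeager and the theorem does the rest.

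The measure half, however, has a genuine gap, which you yourself identify with admirable candor. Your computation that $\mathcal{H}$ fails to be conull is correct: for any pairwise-disjoint computable array of intervals $F_i$ with $|F_i|=i+1$, the event $(A\cup B)\cap F_i\neq\emptyset$ for all $i$ has probability $\prod_{i\ge 0}(1-4^{-(i+1)})>0$, so there is a positive-measure set of pairs for which the cited theorem simply does not apply. Your two proposed repairs are plausible directions but neither is carried out: the recoding idea founders, as you note, on the lack of a common non-dominated time scale for almost-mutually-random $A$ and $B$; and the alternative of weakening the theorem's hypothesis to ``$A\cup B$ has hyperimmune degree'' would need both a proof that this weaker hypothesis suffices and a verification that it holds almost surely under the product measure (the latter seems right since $A\cup B$ is Bernoulli$(3/4)$-random and hyperimmune-free degrees should remain null under that measure, but you do not verify either step). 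So the measure claim is not established here; that work is precisely what \cite{DJS} is cited for.
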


\begin{cor} \cite{DJS} If $A$ and $B$ are $\Delta^0_2$ sets, then $A, B$ do not
  form a minimal pair for relative generic computability in the above
  sense.
\end{cor}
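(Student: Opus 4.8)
The plan is to build, given $\Delta^0_2$ sets $A$ and $B$, a single set $C$ that is generically $A$-computable and generically $B$-computable but not generically computable; such a $C$ witnesses $\widehat{G}(A) \cap \widehat{G}(B) \neq \widehat{G}(\emptyset)$, so $A$ and $B$ do not form a minimal pair. First, if $A$ or $B$ is computable there is nothing to prove, since a minimal pair in the sense of the Question consists of two noncomputable sets (and if $A$ is computable then $\widehat{G}(A) = \widehat{G}(\emptyset) \subseteq \widehat{G}(B)$). So assume $A$ and $B$ are noncomputable and $\Delta^0_2$. Since $A, B \le_T 0'$, Theorem 1.2 of \cite{MM} --- the result, used above, that every nonzero Turing degree $\le \mathbf{0}'$ computes a function not majorized by any computable function --- supplies strictly increasing functions $f \le_T A$ and $g \le_T B$, neither of which is majorized by any computable function; equivalently, by patching on a finite initial segment, neither is dominated by any computable function.

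I would take $C$ to have density $1$, so that (by Proposition \ref{approx}, since $\overline{C}$ then has density $0$) being generically computable is equivalent to having a c.e.\ subset of density $1$, and analogously for generic $A$- and $B$-computability. Thus it suffices to arrange: (i) $C$ has no c.e.\ subset of density $1$; (ii) $C$ has an $A$-c.e.\ subset of density $1$; and (iii) $C$ has a $B$-c.e.\ subset of density $1$. The construction would follow the template of Theorems \ref{nosubset} and \ref{d1}: use the partition of unity $\{R_e\}$ so that the requirement for the $e$-th c.e.\ set $W_e$ acts only on $R_e$ and the construction runs independently on each $R_e$. Inside $R_e$ one guards against $W_e$ being a density-$1$ subset of $C$; since $C$ has density $1$, the only way to do this is to force $W_e \not\subseteq C$ whenever $W_e$ really is of density $1$, which one can do by trapping inside $\overline{C}$ one element of $R_e$ that $W_e$ enumerates (if $W_e$ never meets $R_e$ it cannot be of density $1$, by the ``generic-in-a-positive-density-set'' fact from the paper, so the requirement is then vacuous). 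The essential design point is that the trap on $R_e$ must be opened on a schedule that is noncomputable --- otherwise $C$ would be computable, hence generically computable --- yet coarse enough to be beaten by \emph{any} function not dominated by a computable function; given such a schedule, $f \le_T A$ will certify an $A$-c.e.\ density-$1$ subset of $C$ and $g \le_T B$ a $B$-c.e.\ one, while no computable function certifies any c.e.\ density-$1$ subset, giving (i)--(iii).

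The hard part is precisely this tension: one fixed $C$ must be usable through $A$ via $f$ and through $B$ via $g$ simultaneously, so its revelation schedule cannot be tailored to either function. My proposal for handling this is to make the schedule uniformly slow: on $R_e$, arrange that the trap may be safely opened as soon as one is handed \emph{any} value $h(m)$ exceeding an explicit computable bound, with $m$ allowed to range over a long computable block of integers, so that an ``escaping'' function --- one that is infinitely often larger than every computable function, which is exactly what \cite{MM} provides --- must supply an admissible value, whereas no computable function does. One then verifies that this yields the $A$-c.e.\ and $B$-c.e.\ density-$1$ subsets needed for (ii) and (iii), and that running the same verification with a computable function in place of $f$ or $g$ fails, which gives (i). (One might instead hope to deduce the corollary from the preceding theorem by replacing $A$ and $B$ with Turing-equivalent sets whose union is hyperimmune, using that every nonzero $\Delta^0_2$ Turing degree is a hyperimmune degree by \cite{MM}; but a naive coding of $A$ and $B$ need not make the empty blocks of the two coded sets coincide, so I expect the direct construction to be the safer route.)
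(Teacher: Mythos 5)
The paper does not actually prove this corollary; it is quoted from the forthcoming paper \cite{DJS}, so there is no in-text proof to compare against. Your reduction is sound as far as it goes: taking $C$ of density $1$ does reduce generic $X$-computability of $C$ to $C$ having an $X$-c.e.\ subset of density $1$, so it does suffice to produce a density-$1$ set $C$ with no c.e.\ subset of density $1$ but with both an $A$-c.e.\ and a $B$-c.e.\ subset of density $1$; and invoking Miller--Martin to get $f \le_T A$ and $g \le_T B$ that are not computably majorized is the right raw material.

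The gap is in the actual construction of $C$, which you only gesture at. The difficulty you flag is real, but ``make the schedule uniformly slow'' does not resolve it, and every concrete reading I can give it fails. The traps of $C$ (that is, $\overline{C}$, restricted to each $R_e$) are features of one fixed set. If the trap locations are determined computably (even if responsive to the enumerations of the $W_e$'s), then $C$ is computable and is a c.e.\ subset of itself of density $1$, violating (i). If the traps are placed noncomputably — say the construction is carried out relative to $0'$, with the trap on $R_e$ being the first point of $R_e$ that $W_e$ enumerates, if any — then an $A$-enumeration of a density-$1$ subset of $C$ must, for almost every $e$, either wait until the trap on $R_e$ is revealed or learn that $W_e \cap R_e = \emptyset$, and the set of $e$ for which $A$ can ever certify either of these has no reason to carry full density (the reveal times $t_e$ form a partial function whose domain is $\Sigma^0_1$-complete, and $f$ being \emph{infinitely often} above a function is much weaker than \emph{majorizing} it, which is what would be needed to outwait the reveal for all $e$). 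The hypothesis ``$f(m)$ exceeds an explicit computable bound for some $m$ in a block'' does not discriminate between computable and non-dominated $f$ either: a computable function can be arranged to exceed any explicit computable bound everywhere. What is missing is a mechanism by which one fixed $C$ encodes its traps so that any function escaping all computable bounds locates \emph{all} of them, while no computable enumeration can — and I do not see that your sketch supplies one. You raise, and then set aside, the alternative of reducing to the preceding theorem by replacing $A$ and $B$ with Turing-equivalent sets whose union is hyperimmune; that route has exactly the same unresolved synchronization problem (the coded ``gaps'' of $A'$ and $B'$ need not align, and $\min$ of two non-dominated functions of incomparable degree need not be non-dominated), so merely noting it does not close the gap.
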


Further, we do not know whether there exist minimal degrees or minimal
pairs for generic reducibility.  Since there exist hyperimmune sets of
minimal Turing degree, the following result shows that our embedding
$\mathfrak{B}$ of the Turing degrees into the generic degrees need not
map minimal Turing degrees to minimal generic degrees.

\begin{thm} \cite{DJS} If $\bf a$ is a Turing degree and $\bf
  a$ contains a hyperimmune set, then $\mathcal{R}({\bf a})$ is not a minimal
  generic degree.
\end{thm}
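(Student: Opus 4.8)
The plan is to find a set $B$ with $\mathbf{0}_g <_g \deg_g(B) <_g \deg_g(\mathcal{R}(A))$; the existence of such a $B$ is exactly the statement that $\deg_g(\mathcal{R}(A))$ is not minimal. It suffices to build $B$ with the following three properties: (i) $B \le_T A$; (ii) $B$ is not generically computable; (iii) there is a generic description $\Psi_0$ of $B$ with $\gamma(\chi_A) \not\le_e \gamma(\Psi_0)$. Indeed, (ii) gives $\deg_g(B) >_g \mathbf{0}_g$. Property (i) gives $B \le_g \mathcal{R}(A)$: from any generic listing of $\mathcal{R}(A)$ one can uniformly compute $A$ (the domain of the underlying generic description has density $1$, hence meets each positive-density set $R_n$, and its value there equals $\chi_A(n)$), hence enumerate the graph of the total generic description $\chi_B$ of $B$ — this is the argument already used for the lemma that $A \le_T B$ iff $\mathcal{R}(A) \le_g \mathcal{R}(B)$. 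Property (iii) gives $\mathcal{R}(A) \not\le_g B$: an enumeration operator witnessing $\mathcal{R}(A) \le_g B$ would, applied to $\gamma(\Psi_0)$, output $\gamma(\Theta)$ for some generic description $\Theta$ of $\mathcal{R}(A)$, and then $\gamma(\chi_A) \le_e \gamma(\Theta) \le_e \gamma(\Psi_0)$ (the first since $R_n$ has positive density for each $n$), a contradiction. Combining, $\deg_g(B)$ lies strictly between $\mathbf{0}_g$ and $\deg_g(\mathcal{R}(A))$.

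To construct $B$, fix a hyperimmune set $X$ of degree $\mathbf{a}$ and let $g$ be its principal function, so $g$ is strictly increasing, $g \equiv_T X \equiv_T A$, and $g$ is dominated by no computable function. Let $B$ be the ``staircase'' set obtained by placing, for each $n$, exactly the first $g(n)$ elements of $R_n$ into $B$. Then $B \le_T g \le_T A$, and $B \cap R_n$ is finite for every $n$, so $\rho(B) = 0$ by restricted countable additivity (Lemma \ref{rca}). Let $\Psi_0$ be the partial function that is constantly $0$ on $\overline{B}$ and undefined on $B$; since $\operatorname{dom}(\Psi_0) = \overline{B}$ has density $1$ and $\Psi_0$ agrees with $\chi_B$ there, $\Psi_0$ is a generic description of $B$. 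Note that $\gamma(\Psi_0) \equiv_e \overline{B}$, and under the canonical computable bijection carrying the $k$th element of $R_n$ to the pair $(n,k)$ we have $\overline{B} \equiv_e U_g$, where $U_g = \{(n,k) : g(n) \le k\}$.

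For (ii), suppose $B$ were generically computable. By Proposition \ref{approx} there are c.e.\ sets $C_0 \subseteq \overline{B}$ and $C_1 \subseteq B$ with $\rho(C_0 \cup C_1) = 1$. Since $\rho(B) = 0$ we have $\rho_m(C_1) \to 0$, which forces $\underline{\rho}(C_0) = 1$; in particular $C_0 \cap R_n \ne \emptyset$ for every $n$. As $C_0$ is c.e.\ and the sets $R_n$ are uniformly computable, the map $\phi(n)$ equal to the position within $R_n$ of some effectively located element of $C_0 \cap R_n$ is total computable, and $\phi(n) \ge g(n)$ for all $n$ because $C_0 \subseteq \overline{B}$. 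This contradicts the choice of $g$, so $B$ is not generically computable. This is the first place hyperimmunity is used.

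Finally, (iii) amounts to $\gamma(\chi_X) \not\le_e U_g$ (using $\gamma(\chi_A) \equiv_e \gamma(\chi_X)$, since $A \equiv_T X$). This is the heart of the argument, and I expect it to be the main obstacle. The intuition is that an enumeration of $U_g$ supplies only finite collections of correct upper-bound facts ``$g(n) \le k$'', i.e.\ it approximates $g$ only from above, so no $\Sigma^0_1$ process driven by such an enumeration should be able to recover either the exact values of the not-computably-dominated function $g$ or the membership relation of the non-c.e.\ set $X$. To make this precise one would assume an enumeration operator $V$ with $V(U_g) = \gamma(\chi_X)$ and then, using the $\subseteq$-monotonicity of enumeration operators together with a careful analysis of the finite witnesses attached to the ``$n \notin X$'' facts, extract a computable function dominating $g$ (equivalently, a computable sequence of pairwise disjoint finite sets each meeting $X$), contradicting the hyperimmunity of $X$. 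Handling the interplay between witnesses for the ``$n \in X$'' and ``$n \notin X$'' facts, and the fact that $U_g$ itself is not available computably, is the delicate point. Granting this, $\mathcal{R}(A) \not\le_g B$, so $\mathbf{0}_g <_g \deg_g(B) <_g \deg_g(\mathcal{R}(A))$ and $\deg_g(\mathcal{R}(A))$ is not minimal.
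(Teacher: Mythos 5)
The paper does not actually contain a proof of this theorem: it is stated as a result from the forthcoming paper \cite{DJS}, so there is no in-paper argument to compare against. I can only assess your proposal on its own terms.

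Your overall skeleton is sound. It is correct that producing a set $B$ satisfying (i) $B \le_T A$, (ii) $B$ not generically computable, and (iii) $\gamma(\chi_A) \not\le_e \gamma(\Psi_0)$ for some fixed generic description $\Psi_0$ of $B$ would yield $\mathbf{0}_g <_g \deg_g(B) <_g \deg_g(\mathcal{R}(A))$, by exactly the chain of implications you give. Your ``staircase'' set $B$ (first $g(n)$ elements of each $R_n$) does satisfy (i), has density $0$, and your verification of (ii) is complete and correct: a c.e.\ $C_0 \subseteq \overline{B}$ of density $1$ must meet every $R_n$, and picking out the position of the first element found gives a computable function majorizing $g$, contradicting hyperimmunity. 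This is a genuine and correct use of the hypothesis.

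The gap is (iii), which you yourself flag as ``the heart of the argument'' and ``the main obstacle,'' and which you end with ``Granting this.'' What you need is that $\gamma(\chi_X) \not\le_e U_g$ where $g = p_X$ and $U_g = \{(n,k) : g(n) \le k\}$, and you offer only the intuition that an enumeration of $U_g$ supplies one-sided information (upper bounds on $g$) and that an enumeration operator $V$ with $V(U_g) = \gamma(\chi_X)$ should yield a computable dominating function. This is not a routine verification. The obvious attempts run into the problem that a candidate witness $(\langle n,b\rangle, D) \in V$ need not satisfy $D \subseteq U_g$, so the first witness one finds effectively may certify the wrong value $b$ and give no bound on $g(n)$; filtering out ``bad'' witnesses is a $\Pi^0_1$ condition, and the witness $D_n$ for a correct fact $\langle n, X(n)\rangle$ may only involve pairs $(j,k)$ with $j$ much smaller than $n$, hence bound $g$ only at small arguments. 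So there is genuine combinatorial work to do in converting $V$ into a computable majorant of $g$ or into a computable array of disjoint finite sets meeting $X$, and that work --- the entire content of where hyperimmunity is used for the \emph{second} time --- is absent. Until (iii) is actually proved (or replaced by a more dynamic argument that, for each operator $W$, constructs a generic description $\Psi$ of $B$ on which $W$ fails), the proposal is incomplete.
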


\end{document}